\newcommand{\B}[1]{\boldsymbol{#1}}
\begin{document}

\title{A General Probability Density Framework for Local Histopolation and Weighted Function Reconstruction from Mesh Line Integrals
}

\titlerunning{Weighted Function Reconstruction from Mesh Line Integrals}        

\author{Francesco Dell'Accio \and 
        Allal Guessab \and 
        Mohammed Kbiri Alaoui
        \and
        Federico Nudo 
}


\institute{  Francesco Dell'Accio \at
             Department of Mathematics and Computer Science, University of Calabria, Rende (CS), Italy\\
\email{francesco.dellaccio@unical.it} 
         \and 
             Mohammed
Kbiri \at
             Department of Mathematics, College of Science, King Khalid
University, P.O. Box 9004, 61413 Abha, Saudi Arabia. \\
              \email{mka\_la@yahoo.fr}
                   \and 
           Allal Guessab \at
             Avenue Al Walae, 73000, 
Dakhla, Morocco \\
    \email{guessaballal7@gmail.com}
               \and 
            Federico Nudo (corresponding author) \at
              Department of Mathematics and Computer Science, University of Calabria, Rende (CS), Italy \\
    \email{federico.nudo@unical.it}
}

\date{Version: September 03, 2025}

\maketitle

\begin{abstract}
In this paper, we study the reconstruction of a bivariate function from weighted integrals along the edges of a triangular mesh, a problem of central importance in tomography, computer vision, and numerical approximation. Our approach relies on local histopolation methods defined through unisolvent triples, where the edge weights are induced by suitable probability densities. In particular, we introduce two new two-parameter families of generalized truncated normal distributions, which extend classical exponential-type laws and provide additional flexibility in capturing local features of the target function. These distributions give rise to new quadratic reconstruction operators that generalize the standard linear histopolation scheme, while retaining its simplicity and locality. We establish their theoretical foundations, proving unisolvency and deriving explicit basis functions, and we demonstrate their improved accuracy through extensive numerical tests. Moreover, we design an algorithm for the optimal selection of the distribution parameters, ensuring robustness and adaptivity of the reconstruction. Finally, we show that the proposed framework naturally extends to any bivariate function whose restriction to the edges defines a valid probability density, thus highlighting its generality and broad applicability.
\end{abstract}

\keywords{Gaussian quadrature formulas\and Orthogonal polynomials \and Local histopolation method \and Function Reconstruction\and Truncated normal distribution}
\subclass{65D05 \and 65D15}

\section{Introduction}

The classical concept of polynomial interpolation can be extended beyond pointwise evaluations 
to situations where the available information consists of general linear functionals~\cite{Rivlin}. 
When these data take the form of integrals over geometric domains, the problem is referred to as 
\emph{histopolation}~\cite{Robidoux}, and the corresponding techniques are known as 
\emph{histopolation methods}.  

Histopolation provides a general framework for reconstructing functions from integral data, 
a problem of central importance in many areas of science and engineering. 
In practical applications, direct pointwise samples of the target function are often 
unavailable; instead, only integral measurements are accessible. 
This situation arises prominently in \emph{computed tomography} and \emph{medical imaging}, 
where measurements typically consist of line integrals~\cite{kak2001principles,natterer2001mathematics,palamodov2016reconstruction}. 
Beyond tomography, histopolation has found applications in \emph{computer vision}, 
\emph{signal and image processing}~\cite{Bosner:2020:AOC}, \emph{curve and spline approximation}~\cite{Fischer:2005:MPR,Fischer:2007:CSP,Siewer:2008:HIS,Hallik:2017:QLR}, 
\emph{fractal functions}~\cite{Barnsley:2023:HFF}, and in the \emph{conservation of physical quantities} 
in numerical simulations~\cite{HiemstraJCP}. 
Recently, histopolation methods have received increasing attention~\cite{BruniErbFekete,bruni2024polynomial}, 
including global polynomial histopolation–regression in one~\cite{bruni2025polynomial} and several 
dimensions~\cite{bruno2025bivariate}, as well as weighted \emph{local} polynomial histopolation 
schemes in the bivariate setting~\cite{nudo1, nudo2, dell2025reconstructing,dell2025truncated,milov1}.

A standard strategy for these reconstruction problems is to partition the computational domain into 
subdomains (a mesh), approximate locally on each element, and assemble a global approximation from 
the local contributions. Such schemes are typically \emph{conforming} or \emph{nonconforming}, 
depending on whether the assembled approximation is continuous across element interfaces.

A local approximation method can be rigorously defined through the triple~\cite{Guessab:2022:SAB}
\[
\mathcal{M}_d=\left(S_d, \mathbb{F}_{S_d}, \Sigma_{S_d}\right),
\]
where
\begin{itemize}
    \item $S_d$ is a polytope in $\mathbb{R}^d$, $d \ge 1$;
    \item $\mathbb{F}_{S_d}$ is an $n$-dimensional space of trial functions on $S_d$;
    \item $\Sigma_{S_d}=\left\{\mathcal{L}_j: j=1,\dots,n\right\}$ is a set of linearly independent linear 
    functionals, called the \emph{degrees of freedom}.
\end{itemize}
The triple $\mathcal{M}_d$ is said to be \emph{unisolvent} if, whenever $f \in \mathbb{F}_{S_d}$ 
satisfies
\begin{equation*}
\mathcal{L}_j(f)=0, \quad j=1,\dots,n,    
\end{equation*}
then necessarily $f=0$. A set of functions
\begin{equation*}
    B=\left\{\varphi_i\,:\, i=1,\dots,n\right\}
\end{equation*}
such that
\begin{equation*}
    \operatorname{span}\left\{\varphi_i\,:\, i=1,\dots,n\right\}=S_d
\end{equation*}
and
\begin{equation*}
    \mathcal{L}_j\left(\varphi_i\right)=\delta_{ij}
\end{equation*}
is said to form the basis functions associated with the triple $\mathcal{M}_d$. If the degrees of freedom are integral functionals, the scheme is also referred to as a 
\emph{local histopolation method}.

The simplest example is the linear nonconforming histopolation method on triangles, 
where edge integrals act as degrees of freedom and piecewise linear functions form the 
trial space. While attractive for its simplicity and efficiency, this approach has 
important limitations: its piecewise linear structure fails to reproduce oscillatory 
behaviors or sharp gradients, and higher accuracy can only be achieved through 
substantial mesh refinement.  

In this work we propose an enrichment strategy that enhances local histopolation 
schemes by introducing quadratic trial spaces together with weighted degrees of freedom. 
The key idea is to employ probability densities on the edges of the mesh, which act 
as flexible weights and introduce tunable parameters. 
In particular, we construct two new two-parameter families of generalized truncated 
normal distributions, which extend classical exponential-type laws and include 
special or limiting cases (such as truncated normal and beta-type densities). 
These families naturally lead to enriched quadratic operators that significantly 
improve reconstruction accuracy while retaining the locality and simplicity of 
the original method.  

From a theoretical point of view, we prove unisolvency of the enriched triples 
and derive explicit closed-form basis functions. 
From a practical perspective, we validate the proposed operators through 
extensive numerical experiments, which demonstrate their effectiveness in 
reconstructing both smooth and non-smooth functions. 
Moreover, as shown in Section~\ref{sec3}, the framework is not limited 
to the two families introduced here: the same reasoning applies to any 
bivariate function whose restriction to the edges defines a valid 
probability density, thus highlighting the generality and robustness of 
the proposed approach.  

The paper is organized as follows.
In Section~\ref{sec1}, we introduce two families of generalized truncated normal distributions, analyze their main properties, and define the corresponding families of quadratic histopolation operators. Moreover, we describe an algorithm for the optimal selection of the distribution parameters, which further enhances the robustness and adaptivity of the reconstruction. In Section~\ref{sec3}, we develop the general framework for enriched quadratic histopolation operators and establish their theoretical foundations. In Section~\ref{sec4}, we present numerical experiments that confirm the accuracy and effectiveness of the proposed methods. Finally, in Section~\ref{sec5}, we conclude with a summary of the contributions and discuss possible directions for future research, including three-dimensional extensions and applications to imaging and approximation problems.

\section{ Two-parameter families of generalized truncated normal distributions}\label{sec1}
\subsection{Problem statement}
The problem we want to address is the following: given only integral measurements of an 
unknown bivariate function along the edges of a triangular mesh, reconstruct an accurate 
approximation of the function inside each triangle. 
In the classical local histopolation method, these measurements are taken as unweighted 
edge integrals and the reconstruction space is piecewise linear. While simple and efficient, 
this approach lacks the ability to capture curved or oscillatory features unless the mesh 
is strongly refined. 

\noindent
Our objective is to overcome this limitation by enriching the standard framework. 
We introduce probability density functions as edge weights in the integral data, so that 
each measurement carries additional information about the local behavior of the function. 
This enrichment is feasible under the assumption that additional integral data are available, 
beyond the classical unweighted edge averages. In this setting, the reconstruction naturally 
leads to quadratic operators that preserve locality and simplicity while significantly improving 
approximation accuracy. The role of this section is to present two flexible two-parameter 
distribution families that serve as the building blocks of the proposed weighted histopolation scheme.

\subsection{Definition of the distribution families}
In this section, we introduce two distinct two-parameter families of univariate distributions, each generated by suitable bivariate weight functions defined on a triangle of a given mesh. These families include, as limiting or special cases, several well-known probability laws. They provide the building blocks for a broad class of new piecewise quadratic reconstruction operators, which in turn form the basis of a nonconforming histopolation method locally determined by a unisolvent triple. The goal is to reconstruct a function using information derived from weighted integrals over the edges of a triangular mesh. To this end, we formulate the probability density functions (hereafter, pdfs) associated with the edges of a nondegenerate triangle $T$ with vertices $\B{v}_1, \B{v}_2, \B{v}_3$ and barycentric coordinates $\lambda_1,\lambda_2,\lambda_3$. These coordinates are the unique affine functions on $T$, namely
\begin{equation}\label{prop2}
  \lambda_i\left(t\B{x} + (1-t)\B{y}\right) 
  = t\lambda_i(\B{x}) + (1-t)\lambda_i(\B{y}), 
  \quad \B{x},\B{y}\in T,\quad t\in[0,1],
\end{equation}
which satisfy the partition of unity property,
\begin{equation*}
  \sum_{i=1}^3 \lambda_i(\B{x}) = 1, 
  \quad \B{x}\in T,
\end{equation*}
and the \emph{Kronecker delta property} at the vertices,
\begin{equation*}
  \lambda_i\left(\B{v}_j\right) = \delta_{ij} =
  \begin{cases}
    1, & i=j,\\[4pt]
    0, & i\neq j,
  \end{cases}
  \quad i,j=1,2,3.
\end{equation*}
As a consequence, we have
\begin{equation}\label{propstar}
  \lambda_i(\B{x}) = 0, 
  \quad \B{x}\in s_i, 
  \quad i=1,2,3.
\end{equation}
For the forthcoming analysis, we recall the \emph{lower incomplete gamma function}, which will play a central role. It is defined for $s>0$ and $z> 0$ by
\begin{equation}\label{igamma}
  \gamma(s,z) = \int_{0}^{z} t^{s-1} e^{-t}dt.
\end{equation}
Among its numerous properties, one that we shall exploit is the limiting relation
\begin{equation}\label{limgamma}
  \lim_{z\to 0} \frac{\gamma(s,z)}{z^s} = \frac{1}{s}.
\end{equation}
Indeed, since $$e^{-z} \leq e^{-t} \leq 1, \quad 0 \leq t \leq z,$$ we obtain
\begin{equation*}
  t^{s-1}e^{-z} \leq t^{s-1}e^{-t} \leq t^{s-1}.
\end{equation*}
Integrating over $[0,z]$ with respect to $t$ and dividing by $z^s$, we obtain the inequalities
\begin{equation*}
  \frac{e^{-z}}{s}\leq \frac{\gamma(s,z)}{z^s} \leq \frac{1}{s}.
\end{equation*}
Taking the limit as $z\to 0$ establishes~\eqref{limgamma}.
This relation will be crucial for analyzing the convergence of our densities when one of their parameters tends to infinity. For convenience, we introduce the \emph{modified incomplete gamma function}
\begin{equation}\label{migamma}
  \gamma^{\mathrm{mod}}(s,z) := \frac{\gamma(s,z)}{z^s}=\frac{1}{z^s}\int_{0}^{z} t^{s-1} e^{-t}dt,
\end{equation}
so that~\eqref{limgamma} takes the simpler form
\begin{equation}\label{modlimgamma}
  \lim_{z\to 0} \gamma^{\mathrm{mod}}(s,z) = \frac{1}{s}.
\end{equation}
If a numerical routine is available to evaluate $\gamma(s,z)$, then $\gamma^{\mathrm{mod}}(s,z)$ can be computed directly by scaling. For example, in Matlab one may use 
\texttt{gammainc(z,s)\,gamma(s)}, while in Mathematica the corresponding function is \texttt{Gamma[s,0,z]}.

\subsection{First family of distributions}\label{ss1} 
The first family is defined through the following \emph{bivariate} function with shape parameter $\mu \geq 1$ and scale parameter $\sigma>0$ by
\begin{equation}\label{exp2}
  K_{\sigma,\mu}(\B{x}) 
  = a_{\sigma,\mu} \left(H^2(\B{x})\right)^{\mu -1}
  e^{-\frac{1}{2}\left(\frac{H(\B{x})}{\sigma^2}\right)^{\mu}},
  \quad \B{x}\in T,
\end{equation}
where $H$ is expressed in terms of barycentric coordinates as
\begin{equation}\label{gw}
  H(\B{x}) = 2\sum_{i=1}^{3}\lambda_i^2(\B{x}) - 1,
  \quad \B{x}\in T,
\end{equation}
and $a_{\sigma,\mu}$ is a normalization factor ensuring that integrals involving the associated generalized truncated normal distribution are properly scaled.  
It is worth noting that $K_{\sigma,\mu}$ is a weight function, 
as it is nonnegative throughout the triangle $T$. 
Furthermore, when restricted to any edge of $T$, 
$H$ reduces to a quadratic form, 
and the induced univariate functions not only serve as weights 
but also define valid probability density functions on the edges. An advantage of the general form~\eqref{exp2} is that it allows us to tune the free parameters $\sigma$ and $\mu$ when reconstructing a function with specific features. In particular, different behaviors can be produced by varying the exponential term in~\eqref{exp2}. \\

In the following, we parametrize the edge $s_j$ as
\begin{equation}\label{parm}
      \frac{1+t}{2}\B{v}_{j+1} + \frac{1-t}{2}\B{v}_{j+2}, 
      \quad t\in[-1,1].
\end{equation}
Then, by~\eqref{prop2} and~\eqref{propstar}, together with~\eqref{gw}, we obtain
\begin{equation}\label{hh}
  H\left( \frac{1+t}{2}\B{v}_{j+1} + \frac{1-t}{2}\B{v}_{j+2}\right) = t^2.
\end{equation}
We now define the first class of univariate pdfs associated with the weight function $K_{\sigma,\mu}$.  
It follows directly from the definition of $K_{\sigma,\mu}$ that on the edge $s_i$ of $T$ one has 
\begin{equation}
\label{gwn=10}
 k_{\sigma,\mu}(t):= K_{\sigma,\mu} \left(\frac{1+t}{2}\B{v}_{j+1} + \frac{1-t}{2}\B{v}_{j+2}\right) = a_{\sigma,\mu}  \left(t^2\right)^{2\mu-2} 
 e^{-\frac{1}{2} \left(\frac{t^2}{\sigma^2}\right)^{\mu} }, \quad  t\in \left[-1,1\right]. 
\end{equation}
An important observation is that $k_{\sigma,\mu}(t)$ is independent of the choice of edge of the triangle $T$.  
We shall henceforth work with the normalized probability density functions (pdfs) with unit zeroth moment, which we denote with a tilde. Thus, in order to obtain a normalized pdf, the coefficient $a_{\sigma,\mu}$ in the bivariate weight function $K_{\sigma,\mu}(\B{x})$ defined in~\eqref{exp2} is introduced so that
\begin{equation*}
    \int_{-1}^1 k_{\sigma,\mu}(t)dt = 1.
\end{equation*}
As shown in Lemma~\ref{pd2}, for any $\mu \geq 1$ and $\sigma > 0$, the normalization constant $a_{\sigma,\mu}$ is given by
\begin{equation*}
 a_{\sigma,\mu} = \frac{\mu }{ 2^{\frac{4\mu -3}{2\mu}} \sigma^{4\mu-3}    \gamma\left(\frac{4\mu-3}{2\mu},\frac{1}{2\sigma^{2\mu}}\right)},
\end{equation*}
which can be written more compactly in terms of the modified incomplete gamma function as
\begin{equation}\label{moddens1}
 a_{\sigma,\mu} = \frac{\mu }{    \gamma^{\mathrm{mod}}\left(\frac{4\mu-3}{2\mu},\frac{1}{2\sigma^{2\mu}}\right)}.
\end{equation}
Hence, for any $\sigma>0$ and $\mu\geq 1$, substituting~\eqref{moddens1} into~\eqref{gwn=10}, it follows that the normalized pdfs on $[-1,1]$ are given by
\begin{equation}\label{gwn=1}
  \widetilde{k}_{\sigma,\mu}(t)= \frac{\mu }{    \gamma^{\mathrm{mod}}\left(\frac{4\mu-3}{2\mu},\frac{1}{2\sigma^{2\mu}}\right)} 
   \left(t^2\right)^{2\mu-2}   e^{-\frac{1}{2} \left(\frac{t^2}{\sigma^2}\right)^{\mu}}, \quad  t\in \left[-1,1\right]. 
\end{equation}
As a first observation, this family extends the class of exponential-type distributions.  
Indeed, when $\mu = 1$, equation~\eqref{gwn=1} reduces to the well-known doubly truncated normal distribution on the interval $[-1,1]$, here written in terms of $\gamma^{\mathrm{mod}}$; see~\cite{Jawitz2004}. Indeed, by straightforward calculation, and using the notation~\eqref{modlimgamma}, this distribution can be written as
\begin{equation*}
   \widetilde{k}_{\sigma,1}(t)=  \frac{1}{\gamma^{\mathrm{mod}}\left(\frac{1}{2}, \frac{1}{2\sigma^{2}} \right)}  e^{-\frac{1}{2} \left(\frac{t^2}{\sigma^2}\right)
 }, \quad t\in \left[-1,1\right]. 
\end{equation*}
Recall that this latter distribution has no shape parameter, and thus cannot model all phenomena. Nevertheless, it has been extensively studied and plays a central role in probability theory, see, e.g.,~\cite{Xinyi2012functions} and the references therein. Therefore, our pdfs constitute a new two-parameter family of exponential type. In Figure~\ref{fig:densities}, we plot several instances of $\widetilde{k}_{\sigma,\mu}$, illustrating the strong sensitivity of the distribution to both the shape parameter $\mu$ and the scale parameter $\sigma$. This highlights the increased flexibility and importance of the generalized family.

    \begin{figure}
      \centering
\includegraphics[width=0.49\linewidth]{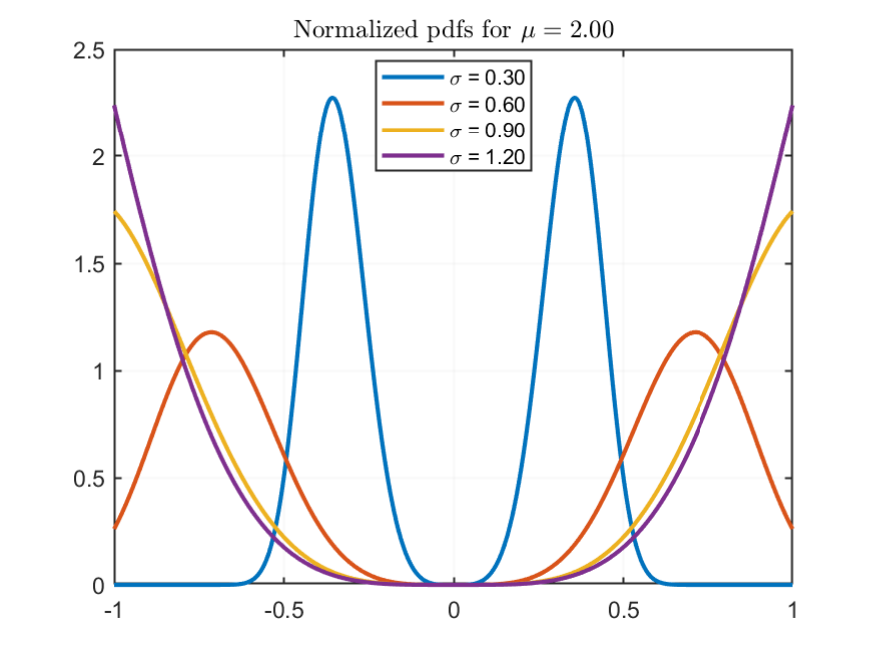}
\includegraphics[width=0.49\linewidth]{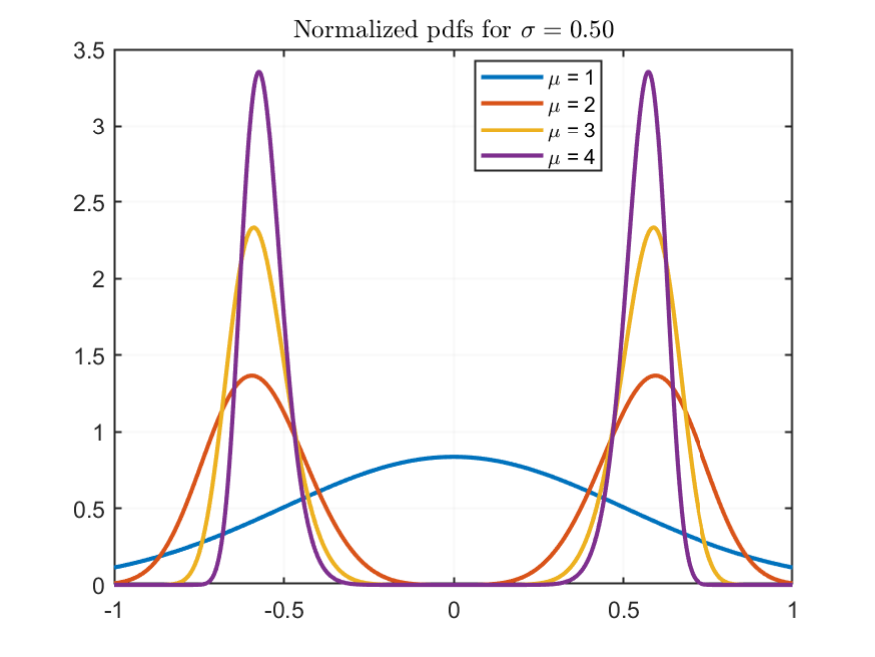}
      \caption{Normalized probability density functions $\widetilde{k}_{\sigma,\mu}$ on $[-1,1]$. 
Left: effect of the scale parameter, with fixed $\mu=2$ and different values of $\sigma$. 
Right: effect of the shape parameter, with fixed $\sigma=0.50$ and varying values of $\mu$, illustrating the increased sharpness and concentration around $t=0$. 
These plots emphasize the flexibility provided by the two parameters.}
      \label{fig:densities}
  \end{figure}

 \begin{remark} 
The pdfs $\widetilde{k}_{\sigma,\mu}$ also include a beta-type distribution as a limiting
case. Indeed, using~\eqref{modlimgamma} with 
\begin{equation*}
s=\frac{4\mu-3}{2\mu}, \quad z=\frac{1}{2\sigma^{2\mu}},     
\end{equation*}
 we obtain
\begin{equation}\label{prov1}
    \lim_{\sigma \to \infty} \gamma^{\mathrm{mod}}\left(\frac{4\mu-3}{2\mu},\frac{1}{2\sigma^{2\mu}}\right) 
    = \frac{2\mu}{4\mu-3}.
\end{equation}
Therefore, for any $t\in[-1,1]$, the limiting distribution is
\begin{equation}\label{w1inf}
  \widetilde{k}_{\infty,\mu}(t) 
  := \lim_{\sigma \to \infty} \widetilde{k}_{\sigma,\mu}(t) 
  = \frac{4\mu-3}{2}\left(t^2   \right)^{2\mu-2}.
\end{equation}
Thus, $\widetilde{k}_{\sigma,\mu}$ converges to a beta-type distribution as $\sigma \to \infty$.
 \end{remark}
In order to present our local reconstruction operator, which is based on the pdfs $\widetilde{k}_{\sigma,\mu}$, we now introduce some additional notations and recall the basic setting for clarity of exposition.  
The operator is built locally on each nondegenerate triangle $T$ of a given mesh.  
For notational convenience, we set
\begin{equation*}
    \B{v}_4 = \B{v}_1, \quad  \B{v}_5 = \B{v}_2,
\end{equation*}
as well as 
\begin{equation*}
   \lambda_4 = \lambda_1, \quad \lambda_5 = \lambda_2.  
\end{equation*}
Then, using the parametrization of the side $s_j$ given in~\eqref{parm}, we introduce the linear functionals 
\begin{equation}\label{lev} \mathcal{I}_j^{\mathrm{CH}}(f) =\frac{1}{\left\lvert s_j\right\rvert} \int_{s_j} f(\boldsymbol{x}) d\boldsymbol{x}=\frac{1}{2}\int_{-1}^1 f\left(\frac{1+t}{2}\B v_{j+1}+\frac{1-t}{2}\B v_{j+2}\right) d\boldsymbol{x}, \quad j=1,2,3, 
\end{equation}
and denote
 $$\Sigma^{\mathrm{CH}} = \left\{\mathcal{I}_j^{\mathrm{CH}} \, : \, j=1,2,3\right\}.$$
Thus, the linear nonconforming histopolation method can be locally represented by the triple
\begin{equation}\label{CRelement}
\mathcal{CH} = \left(T,  \mathbb{P}_1(T), \Sigma^{\mathrm{CH}}\right),    
\end{equation}
where $\mathbb{P}_1(T)$ is the space of linear polynomials on $T$, and its associated basis functions are expressed in barycentric coordinates as
\begin{equation*}
   \varphi_i^{\mathrm{CH}} = 1 - 2\lambda_i, 
   \quad i=1,2,3,
\end{equation*}
as can be directly verified. The reconstruction operator associated to the triple~\eqref{CRelement} is
\begin{equation}\label{pi1CH}
\begin{array}{rcl}
{\pi}^{{\mathrm{CH}}}: C(T) &\rightarrow& {\mathbb{P}}_{1}(T)
\\
f &\mapsto& \displaystyle{\sum_{j=1}^{3}}  \mathcal{I}^{\mathrm{CH}}_{j}\left(f\right)\varphi^{\mathrm{CH}}_{j}.
\end{array}
\end{equation}

To describe our first new two-parameter family of nonconforming histopolation operators, we introduce a class of enriched linear functionals, defined by
\begin{eqnarray}
  {\mathcal{I}}_{j}^{\mathrm{enr}}(f)&=& \int_{-1}^{1} 
 f\left(\frac{1+t}{2}\B{v}_{j+1} + \frac{1-t}{2}\B{v}_{j+2}\right) \widetilde{k}_{\sigma,\mu}(t)  dt, \quad j=1,2,3,\label{ex3qq} \\
 {\mathcal{L}}_{j}^{\mathrm{enr}}(f)&=& \int_{-1}^{1} {p}_{\sigma,\mu}(t)
 f\left(\frac{1+t}{2}\B{v}_{j+1} + \frac{1-t}{2}\B{v}_{j+2}\right) \widetilde{k}_{\sigma,\mu}(t)  dt, \quad j=1,2,3,\label{ex3qqq}
\end{eqnarray}
 where  
\begin{equation}\label{LegPol}
{p}_{\sigma,\mu}(t)=       t^2-    \frac{\gamma^{\mathrm{mod}}\left(\frac{4\mu-1}{2\mu}, \frac{1}{2\sigma^{2\mu}} \right)}{\gamma^{\mathrm{mod}}\left(\frac{4\mu-3}{2\mu}, \frac{1}{2\sigma^{2\mu}} \right)}.
\end{equation}
Since $\widetilde{k}_{\sigma,\mu}$ is a probability density, the functionals 
$\mathcal{I}_{j}^{\mathrm{enr}}$ and $\mathcal{L}_{j}^{\mathrm{enr}}$ can be interpreted as weighted averages of $f$ and $p_{\sigma,\mu}f$ along the side $s_j$ of $T$.  The rationale behind the choice of $p_{\sigma,\mu}$, which will become clear in Section~\ref{sec3}, lies in its central role in the subsequent analysis. 
Using~\eqref{hh} and the parametrization~\eqref{parm}, this polynomial can equivalently be expressed in terms of the function $H$ defined in~\eqref{gw} as
  \begin{eqnarray*}
     {p}_{\sigma,\mu}(t)&=&  \widetilde{H} \left(\frac{1+t}{2}\B{v}_{j+1} + \frac{1-t}{2}\B{v}_{j+2}\right) \\
     &:=& H \left(\frac{1+t}{2}\B{v}_{j+1} + \frac{1-t}{2}\B{v}_{j+2}\right) - \frac{\gamma^{\mathrm{mod}}\left(\frac{4\mu-1}{2\mu}, \frac{1}{2\sigma^{2\mu}} \right)}{\gamma^{\mathrm{mod}}\left(\frac{4\mu-3}{2\mu}, \frac{1}{2\sigma^{2\mu}} \right)}.
  \end{eqnarray*}
  \begin{remark}
      We remark that, using~\eqref{gwn=10}, the product $p_{\sigma,\mu}(t)\widetilde{k}_{\sigma,\mu}(t)$ appearing in the definition of 
$\mathcal{L}_{j}^{\mathrm{enr}}$ can be further written as
\begin{equation*}
   p_{\sigma,\mu}(t)\widetilde{k}_{\sigma,\mu}(t)
   = \left(\widetilde{H}K_{\sigma,\mu}\right)
     \left(\frac{1+t}{2}\B{v}_{j+1} + \frac{1-t}{2}\B{v}_{j+2}\right).
\end{equation*}
Moreover, we note that when $\mu=1$, equation~\eqref{w1inf} implies
$$\widetilde{k}_{\infty,1}(t) = \frac{1}{2},$$  
and in this particular case the enriched functionals in~\eqref{ex3qq} reduce to the classical unweighted histopolation functionals of~\eqref{lev}.

  \end{remark}

We define the associated set of enriched linear functionals as
 \begin{equation*}
\Sigma^{\mathrm{enr}}_{1,\sigma,\mu}(T)=  \left\{{\mathcal{I}}_{j}^{\mathrm{enr}}, {\mathcal{L}}^{\mathrm{enr}}_{j}\, : \, j=1,2,3\right\}
\end{equation*}
and introduce the enriched triple 
\begin{equation}\label{tripless}
\mathcal{H}^{\mathrm{enr}}_{1,\sigma,\mu}= \left(T,  \mathbb{P}_{2}(T),\Sigma^{\mathrm{enr}}_{1,\sigma,\mu}(T)\right),
\end{equation}
where  ${\mathbb{P}}_{2}(T)$ is the set of all quadratic polynomials  defined on $T.$ 
This triple can be regarded as a generalization of the linear (unweighted) histopolation scheme.  
We employ the proposed enriched quadratic operators $\mathcal{H}^{\mathrm{enr}}_{1,\sigma,\mu}$ as building blocks for the function reconstruction method.  
Their quadratic nature allows them to represent curved features and ensures greater accuracy compared to the linear case.

The first step in our analysis is to show that, for any $\sigma>0$ and $\mu\geq 1$, the system $\mathcal{H}^{\mathrm{enr}}_{1,\sigma,\mu}$ defines a well-posed unisolvent triple for $\mathbb{P}_2(T)$.  
The second step is to determine the associated basis functions.  
These functions are then employed to define the first two-parameter family of \emph{local function reconstruction} operators.  \\

We now summarize our main findings together with some immediate consequences.  
Detailed proofs will be provided at the end of this subsection.

\begin{theorem}\label{th1nnewfin11} 
 The triple $\mathcal{H}^{\mathrm{enr}}_{1,\sigma,\mu}$ is unisolvent for any $\sigma>0$ and $\mu\geq 1$.
\end{theorem}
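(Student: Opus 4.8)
The plan is to establish unisolvency directly: since $\Sigma^{\mathrm{enr}}_{1,\sigma,\mu}(T)$ consists of six functionals and $\dim \mathbb{P}_2(T) = 6$, it suffices to show that the only $q\in\mathbb{P}_2(T)$ annihilated by all six functionals is $q\equiv 0$. So I would take an arbitrary $q\in\mathbb{P}_2(T)$ with $\mathcal{I}_j^{\mathrm{enr}}(q)=\mathcal{L}_j^{\mathrm{enr}}(q)=0$ for $j=1,2,3$ and aim to deduce $q=0$.

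The key idea is to exploit the restriction of $q$ to each edge. Fix an edge $s_j$ and write $g_j(t)=q\!\left(\tfrac{1+t}{2}\B{v}_{j+1}+\tfrac{1-t}{2}\B{v}_{j+2}\right)$, which is a univariate polynomial of degree at most $2$ in $t\in[-1,1]$, say $g_j(t)=\alpha_j + \beta_j t + \gamma_j t^2$. The two conditions on edge $s_j$ read $\int_{-1}^1 g_j(t)\,\widetilde{k}_{\sigma,\mu}(t)\,dt = 0$ and $\int_{-1}^1 p_{\sigma,\mu}(t)\,g_j(t)\,\widetilde{k}_{\sigma,\mu}(t)\,dt=0$. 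First I would observe that $\widetilde{k}_{\sigma,\mu}$ is an even weight on $[-1,1]$, so all odd moments vanish; hence the first condition becomes $\alpha_j + \gamma_j\,m_2 = 0$ where $m_2=\int_{-1}^1 t^2\widetilde{k}_{\sigma,\mu}(t)\,dt$, and by the very definition of $p_{\sigma,\mu}$ in~\eqref{LegPol} one has $m_2 = \gamma^{\mathrm{mod}}\!\left(\tfrac{4\mu-1}{2\mu},\tfrac{1}{2\sigma^{2\mu}}\right)/\gamma^{\mathrm{mod}}\!\left(\tfrac{4\mu-3}{2\mu},\tfrac{1}{2\sigma^{2\mu}}\right)$, i.e. $p_{\sigma,\mu}(t)=t^2-m_2$ is exactly the quadratic orthogonal (with respect to $\widetilde{k}_{\sigma,\mu}$) to constants. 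Using this orthogonality, the second condition simplifies: $\int p_{\sigma,\mu}g_j\widetilde{k}_{\sigma,\mu} = \alpha_j\int p_{\sigma,\mu}\widetilde{k}_{\sigma,\mu} + \beta_j\int t\,p_{\sigma,\mu}\widetilde{k}_{\sigma,\mu} + \gamma_j\int t^2 p_{\sigma,\mu}\widetilde{k}_{\sigma,\mu}$; the first term vanishes by orthogonality, the second vanishes by parity, and the third equals $\gamma_j\int p_{\sigma,\mu}^2\widetilde{k}_{\sigma,\mu} + \gamma_j m_2\int p_{\sigma,\mu}\widetilde{k}_{\sigma,\mu} = \gamma_j\,\lVert p_{\sigma,\mu}\rVert^2_{\widetilde{k}_{\sigma,\mu}}$. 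Since $p_{\sigma,\mu}$ is a nonzero polynomial and $\widetilde{k}_{\sigma,\mu}>0$ on $(-1,1)$, this norm is strictly positive, forcing $\gamma_j=0$, and then $\alpha_j=0$. Thus $g_j$ is the linear function $\beta_j t$ on each edge.

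It remains to conclude from "$q$ restricted to each edge is an odd linear function of the edge parameter" that $q\equiv 0$. I would argue that a quadratic $q$ whose restriction to $s_j$ equals $\beta_j t$ must have $q(\B{v}_{j+1})=\beta_j$ and $q(\B{v}_{j+2})=-\beta_j$ from the endpoints $t=\pm1$. Running over $j=1,2,3$ and using the cyclic labeling ($\B{v}_4=\B{v}_1$, $\B{v}_5=\B{v}_2$), these six vertex equations give a small linear system in $\beta_1,\beta_2,\beta_3$ whose only solution is $\beta_1=\beta_2=\beta_3=0$ (e.g. $q(\B{v}_1)=\beta_3=-\beta_1$, $q(\B{v}_2)=\beta_1=-\beta_2$, $q(\B{v}_3)=\beta_2=-\beta_3$, which chains to $\beta_1=-\beta_1$). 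Hence $q$ vanishes at all three vertices and, being quadratic with zero restriction at the midpoint-type data as well, must actually vanish on each edge; a quadratic vanishing on the three edges of a nondegenerate triangle is identically zero (its zero set contains three distinct lines, impossible for a nonzero degree-$2$ polynomial), so $q\equiv0$.

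The main obstacle is the final combinatorial step: one has to be careful that "restriction is linear and odd in $t$" is used correctly together with the cyclic edge labeling, since a naive count might suggest more freedom than there is. The cleanest route is the vertex-chaining argument above, which pins down all $\beta_j=0$ from consistency alone; after that the triangle geometry does the rest. The analytic part (the two integral conditions collapsing via parity and orthogonality) is routine once one notices that $p_{\sigma,\mu}$ was engineered precisely to be the degree-two orthogonal polynomial for the weight $\widetilde{k}_{\sigma,\mu}$, which is exactly the "rationale" promised in the text around~\eqref{LegPol}.
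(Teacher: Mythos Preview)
Your proof is correct and takes a genuinely different route from the paper. The paper expands an arbitrary $p_2\in\mathbb{P}_2(T)$ in the barycentric basis $\{\lambda_i,\lambda_i^2\}$ and evaluates the enriched functionals on these generators via a sequence of preparatory lemmas (giving $\mathcal{L}_j^{\mathrm{enr}}(\lambda_i)=0$, $\mathcal{L}_j^{\mathrm{enr}}(\lambda_i^2)=\tfrac{1}{4}\|p_{\sigma,\mu}\|^2_{2,\sigma,\mu}(1-\delta_{ij})$, and $\mathcal{I}_j^{\mathrm{enr}}(\lambda_i)=\tfrac12(1-\delta_{ij})$); the six vanishing conditions then decouple into two $3\times 3$ linear systems sharing the same invertible coefficient matrix $A$, forcing all coefficients to zero. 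Your argument instead works edge by edge: the parity/orthogonality reduction of each edge trace to $\beta_j t$ is the same analytic core, but your endgame---vertex consistency forcing $\beta_j=0$, followed by the geometric fact that a quadratic vanishing on three distinct lines is identically zero---replaces the paper's linear-algebra step with a more elementary combinatorial/geometric argument. The paper's route has the advantage that its auxiliary lemmas are immediately reused to derive the explicit basis functions in the subsequent theorem, whereas your argument is self-contained but would require separate work for that purpose. One small wobble: your displayed chain ``$q(\B{v}_1)=\beta_3=-\beta_1$'' etc.\ has index slips (e.g.\ $\B{v}_1\notin s_1$, so $q(\B{v}_1)$ is never $\pm\beta_1$ directly); the correct relations read $\beta_3=-\beta_2$, $\beta_1=-\beta_3$, $\beta_2=-\beta_1$, which still collapse to $\beta_j=0$, so the argument stands.
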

We next derive closed-form expressions for the basis functions associated with the enriched triple $\mathcal{H}^{\mathrm{enr}}_{1,\sigma,\mu}$.  
Specifically, we determine a basis
\begin{equation*}
    {B}_{2,\sigma,\mu}=\left\{\varphi_{i}, \psi_{i} \, :\, i=1,2,3\right\}
\end{equation*}
of the vector space $\mathbb{P}_{2}(T)$ satisfying the conditions
\begin{align}
{{\mathcal{I}}}_{j}^{\mathrm{enr}}\left(  \varphi_{i}\right) &=  \delta_{ij},\label{c1}
  \\ 
{{\mathcal{L}}}^{\mathrm{enr}}_{j}\left(\varphi_{i}\right) &=  0,  \label{c2} \\ 
{{\mathcal{I}}}_j^{\mathrm{enr}}\left(\psi_{i}\right) &= 0,  \label{c3}\\ {{\mathcal{L}}}^{\mathrm{enr}}_{j}\left(\psi_{i}\right) &=  \delta_{ij}, \label{c4}
\end{align}
for any $i,j=1,2,3$.

\begin{theorem}\label{th2allalf3r} The basis functions associated with the enriched triple $\mathcal{H}^{\mathrm{enr}}_{1,\sigma,\mu}$ are given by
\begin{align}
   \varphi_{i}&=  1-2\lambda_i,  \quad i=1,2,3, \label{varphi}\\
    \psi_{i}&=  - A_{\sigma,\mu}\varphi_{i}+\frac{2}{\left\|{p}_{\sigma,\mu}\right\|^2_{2,{{\sigma,\mu}} }}\left(-\lambda_i^2+\lambda^2_{i+1}+\lambda^2_{i+2}\right), \quad i=1,2,3, \label{phi}
\end{align}
where 
\begin{equation*}
    \left\|{p}_{\sigma,\mu}\right\|^2_{2,{{\sigma,\mu}} }=\int_{-1}^{1}{p}^2_{\sigma,\mu}(t)
\widetilde{k}_{\sigma,\mu}(t)dt,
\end{equation*}
\begin{equation}\label{Asigma}
A_{\sigma,\mu}=\frac{1+t_{2,\sigma,\mu}}{ \left\|{p}_{\sigma,\mu}\right\|^2_{2,{{\sigma,\mu}}}},
\end{equation}
and
\begin{equation*}
    t_{2,\sigma,\mu}=\int_{-1}^{1}t^2
\widetilde{k}_{\sigma,\mu}(t)dt,
\end{equation*}
denotes the second moment of $\ \widetilde{k}_{\sigma,\mu}.$
      \end{theorem}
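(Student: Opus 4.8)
The plan is to check directly that the six functions $\varphi_1,\varphi_2,\varphi_3,\psi_1,\psi_2,\psi_3$ belong to $\mathbb{P}_2(T)$ and satisfy the biorthogonality relations \eqref{c1}--\eqref{c4} with respect to the functionals in $\Sigma^{\mathrm{enr}}_{1,\sigma,\mu}(T)$. Since $\mathcal{H}^{\mathrm{enr}}_{1,\sigma,\mu}$ is unisolvent by Theorem~\ref{th1nnewfin11}, a family biorthogonal to these six linearly independent functionals is automatically linearly independent, hence a basis of the $6$-dimensional space $\mathbb{P}_2(T)$, and it is the unique such family; so exhibiting functions with the stated properties finishes the proof. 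That $\varphi_i=1-2\lambda_i\in\mathbb{P}_1(T)\subset\mathbb{P}_2(T)$ is clear, and $\psi_i$ is a linear combination of $\varphi_i$ and $q_i:=-\lambda_i^2+\lambda_{i+1}^2+\lambda_{i+2}^2$ (indices read cyclically, $\lambda_4=\lambda_1$, $\lambda_5=\lambda_2$), each of degree at most two in the affine functions $\lambda_1,\lambda_2,\lambda_3$, whence $\psi_i\in\mathbb{P}_2(T)$.

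Everything then reduces to evaluating these functions along the edges. By \eqref{prop2}, \eqref{propstar} and the parametrization \eqref{parm}, on the edge $s_j$ one has $\lambda_j\equiv 0$, $\lambda_{j+1}=\tfrac{1+t}{2}$, $\lambda_{j+2}=\tfrac{1-t}{2}$, which gives
\[
\varphi_j\big|_{s_j}=1,\qquad \varphi_{j+1}\big|_{s_j}=-t,\qquad \varphi_{j+2}\big|_{s_j}=t,
\]
and, by the same substitution, $q_j\big|_{s_j}=\tfrac{1+t^2}{2}$, $q_{j+1}\big|_{s_j}=-t$, $q_{j+2}\big|_{s_j}=t$. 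In parallel I would record the moment identity
\[
\int_{-1}^{1}(t^2)^m\,e^{-\frac12(t^2/\sigma^2)^\mu}\,dt=\frac{1}{\mu}\,\gamma^{\mathrm{mod}}\!\Big(\tfrac{2m+1}{2\mu},\tfrac{1}{2\sigma^{2\mu}}\Big),
\]
which follows from the change of variables used in the proof of Lemma~\ref{pd2} (the case $m=2\mu-2$ is the normalization \eqref{moddens1}). Taking $m=2\mu-1$ yields $t_{2,\sigma,\mu}=\gamma^{\mathrm{mod}}(\tfrac{4\mu-1}{2\mu},\tfrac{1}{2\sigma^{2\mu}})\big/\gamma^{\mathrm{mod}}(\tfrac{4\mu-3}{2\mu},\tfrac{1}{2\sigma^{2\mu}})$, i.e. $p_{\sigma,\mu}(t)=t^2-t_{2,\sigma,\mu}$; in particular $\int_{-1}^{1}p_{\sigma,\mu}\,\widetilde{k}_{\sigma,\mu}\,dt=0$, so $p_{\sigma,\mu}$ is orthogonal to the constants with respect to $\widetilde{k}_{\sigma,\mu}$. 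I would also note that $\widetilde{k}_{\sigma,\mu}$ and $p_{\sigma,\mu}$ are even functions and that $\|p_{\sigma,\mu}\|^2_{2,\sigma,\mu}>0$, so division by it is legitimate.

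With these ingredients the four families of identities drop out. For $\varphi_i$, $\mathcal{I}_j^{\mathrm{enr}}(\varphi_i)$ is the $\widetilde{k}_{\sigma,\mu}$-average over $s_j$ of $1$ when $i=j$ (equal to $1$ by normalization) and of $\pm t$ otherwise (equal to $0$ by parity), giving \eqref{c1}; similarly $\mathcal{L}_j^{\mathrm{enr}}(\varphi_i)$ integrates either $p_{\sigma,\mu}$ against $\widetilde{k}_{\sigma,\mu}$ (zero by orthogonality) or $\pm t$ against $p_{\sigma,\mu}\widetilde{k}_{\sigma,\mu}$ (zero by parity), giving \eqref{c2}. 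For $\psi_i$ the decisive point is that $A_{\sigma,\mu}=(1+t_{2,\sigma,\mu})/\|p_{\sigma,\mu}\|^2_{2,\sigma,\mu}$ is calibrated exactly so that
\[
\psi_j\big|_{s_j}=-A_{\sigma,\mu}+\frac{1+t^2}{\|p_{\sigma,\mu}\|^2_{2,\sigma,\mu}}=\frac{t^2-t_{2,\sigma,\mu}}{\|p_{\sigma,\mu}\|^2_{2,\sigma,\mu}}=\frac{p_{\sigma,\mu}(t)}{\|p_{\sigma,\mu}\|^2_{2,\sigma,\mu}},
\]
whereas $\psi_{j+1}\big|_{s_j}$ and $\psi_{j+2}\big|_{s_j}$ are scalar multiples of $t$. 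Then $\mathcal{I}_j^{\mathrm{enr}}(\psi_j)=\tfrac{1}{\|p_{\sigma,\mu}\|^2_{2,\sigma,\mu}}\int_{-1}^{1}p_{\sigma,\mu}\widetilde{k}_{\sigma,\mu}\,dt=0$ and the remaining cases vanish by parity, which is \eqref{c3}; and $\mathcal{L}_j^{\mathrm{enr}}(\psi_j)=\tfrac{1}{\|p_{\sigma,\mu}\|^2_{2,\sigma,\mu}}\int_{-1}^{1}p_{\sigma,\mu}^2\widetilde{k}_{\sigma,\mu}\,dt=1$ while $\mathcal{L}_j^{\mathrm{enr}}(\psi_{j+1})$ and $\mathcal{L}_j^{\mathrm{enr}}(\psi_{j+2})$ integrate an odd function against the even weight $p_{\sigma,\mu}\widetilde{k}_{\sigma,\mu}$ and hence vanish, which is \eqref{c4}. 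I expect the only genuinely fiddly part to be the cyclic-index case analysis giving $q_i\big|_{s_j}$, together with the algebraic cancellation that turns the two-term expression for $\psi_j\big|_{s_j}$ into the single orthogonal polynomial $p_{\sigma,\mu}/\|p_{\sigma,\mu}\|^2_{2,\sigma,\mu}$; the rest is parity and the normalization identities already at hand.
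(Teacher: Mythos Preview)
Your proof is correct, but it takes a genuinely different route from the paper. The paper proceeds \emph{constructively}: it writes a generic $p_2\in\mathbb{P}_2(T)$ in barycentric form, applies the functionals $\mathcal{I}_j^{\mathrm{enr}}$ and $\mathcal{L}_j^{\mathrm{enr}}$ using the precomputed values $\mathcal{I}_j^{\mathrm{enr}}(\lambda_i)$, $\mathcal{I}_j^{\mathrm{enr}}(\lambda_i^2)$, $\mathcal{L}_j^{\mathrm{enr}}(\lambda_i)$, $\mathcal{L}_j^{\mathrm{enr}}(\lambda_i^2)$ from Lemmas~\ref{I12}, \ref{I22}, \ref{lem22}, \ref{F22}, and then solves the resulting $3\times 3$ linear systems (governed by the matrix $A$ of~\eqref{MA}) for the unknown coefficients. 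You instead \emph{verify} the claimed formulas directly: you restrict each candidate function to the edge $s_j$, obtain an explicit polynomial in $t$, and check the biorthogonality relations using only normalization, parity of $\widetilde{k}_{\sigma,\mu}$, and the single orthogonality $\int p_{\sigma,\mu}\widetilde{k}_{\sigma,\mu}=0$. Your approach is shorter and more transparent once the formulas are in hand---the key algebraic insight that $\psi_j\big|_{s_j}=p_{\sigma,\mu}/\|p_{\sigma,\mu}\|^2_{2,\sigma,\mu}$ makes all four conditions for $\psi_i$ immediate---whereas the paper's approach has the advantage of showing how the formulas are discovered and of reusing the same matrix structure as in the unisolvency proof.
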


We now have all the necessary ingredients to define the two-parameter local reconstruction operator associated with the pdfs  $\widetilde{k}_{\sigma,\mu}$.  
This operator is constructed using the basis functions of Theorem~\ref{th2allalf} and is defined by
\begin{equation}\label{pi1}
\begin{array}{rcl}
{\pi}_{1,\sigma,\mu}^{{\mathrm{enr}}}: C(T) &\rightarrow& {\mathbb{P}}_{2}(T)
\\
f &\mapsto& \displaystyle{\sum_{j=1}^{3}  {\mathcal{I}}^{\mathrm{enr}}_{j}\left(f\right){ \varphi}_{j}+ \sum_{j=1}^{3}{\mathcal{L}}^{\mathrm{enr}}_{j}\left(f\right)}{  \psi}_{j}.
\end{array}
\end{equation}

The proofs are based on certain properties of the pdfs $\widetilde{k}_{\sigma,\mu}$ and the polynomial $p_{\sigma,\mu}$.  
To establish Theorems~\ref{th1nnewfin11} and~\ref{th2allalf3r}, we first present several auxiliary lemmas and recall some concepts that will be useful in the sequel.  
In the discussion that follows, particular emphasis will be placed on the first three lemmas, which will be invoked repeatedly.\\

We begin with a lemma concerning the incomplete gamma function~\eqref{igamma}.
 
  \begin{lemma}\label{rho1} 
For any $\rho>0$ and $s>-1$, the following identity holds
 \begin{equation*}
       \int_{0}^{1} t^s e^{-(\rho  t)^2} dt =\frac{1}{2}  \gamma^{\mathrm{mod}}\left(\frac{s+1}{2},\rho^2\right),
     \end{equation*}
     where $\gamma^{\mathrm{mod}}$ denotes the modified incomplete gamma function defined in~\eqref{migamma}.
  \end{lemma}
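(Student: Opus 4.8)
The plan is to reduce the integral directly to the definition of the lower incomplete gamma function~\eqref{igamma} by a single substitution. Starting from $\int_{0}^{1} t^{s} e^{-(\rho t)^2}\,dt$, I would set $u = (\rho t)^2$, so that $t = \sqrt{u}/\rho$ and $dt = \tfrac{1}{2\rho}u^{-1/2}\,du$. As $t$ runs over $[0,1]$, the new variable $u$ runs over $[0,\rho^2]$. Substituting, $t^{s} = u^{s/2}/\rho^{s}$, and the integrand becomes $\tfrac{1}{2\rho^{s+1}} u^{s/2} u^{-1/2} e^{-u}\,du = \tfrac{1}{2\rho^{s+1}} u^{(s-1)/2} e^{-u}\,du$. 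Writing the exponent as $u^{(s-1)/2} = u^{\frac{s+1}{2}-1}$, we recognize exactly the integrand of $\gamma\!\left(\tfrac{s+1}{2},\rho^2\right)$, hence
\[
\int_{0}^{1} t^{s} e^{-(\rho t)^2}\,dt = \frac{1}{2\rho^{s+1}}\,\gamma\!\left(\frac{s+1}{2},\rho^2\right).
\]

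Next I would convert this to the modified incomplete gamma function. By definition~\eqref{migamma}, $\gamma^{\mathrm{mod}}(s',z) = \gamma(s',z)/z^{s'}$, so with $s' = \tfrac{s+1}{2}$ and $z = \rho^2$ we get $z^{s'} = (\rho^2)^{(s+1)/2} = \rho^{s+1}$, and therefore $\gamma\!\left(\tfrac{s+1}{2},\rho^2\right) = \rho^{s+1}\,\gamma^{\mathrm{mod}}\!\left(\tfrac{s+1}{2},\rho^2\right)$. Plugging this into the previous display cancels the factor $\rho^{s+1}$ and yields precisely $\tfrac{1}{2}\gamma^{\mathrm{mod}}\!\left(\tfrac{s+1}{2},\rho^2\right)$, which is the claimed identity.

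Finally, I would check the hypotheses to make sure the manipulations are legitimate. The condition $\rho > 0$ guarantees the substitution $u = (\rho t)^2$ is a genuine monotone change of variables on $[0,1]$ and that $\rho^{s+1}$ is well defined and nonzero. The condition $s > -1$ is exactly what makes $\tfrac{s+1}{2} > 0$, so that $\gamma\!\left(\tfrac{s+1}{2},\cdot\right)$ is defined (matching the requirement $s'>0$ in~\eqref{igamma}) and the original integral $\int_0^1 t^s e^{-(\rho t)^2}\,dt$ converges at $t=0$, since near $0$ the integrand behaves like $t^s$. There is no real obstacle here: the only point requiring any care is bookkeeping the exponents so that the powers of $\rho$ cancel cleanly, and verifying that $s>-1$ is the precise admissibility condition rather than something stronger. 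This is a short computational lemma, and I would present it essentially as the two displays above together with a sentence on the hypotheses.
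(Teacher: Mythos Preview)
Your proof is correct and follows essentially the same approach as the paper: both use the substitution $u=(\rho t)^2=\rho^2 t^2$ to reduce the integral to the definition of $\gamma\left(\tfrac{s+1}{2},\rho^2\right)$, then rewrite in terms of $\gamma^{\mathrm{mod}}$. Your additional remarks on why the hypotheses $\rho>0$ and $s>-1$ are needed are a welcome clarification.
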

  \begin{proof}
By using the change of variables 
$u = \rho^2 t^2$, we obtain
\begin{equation*}
   \int_{0}^{1} t^s e^{-(\rho t)^2} dt 
   = \frac{1}{2\rho^{s+1}} \int_{0}^{\rho^2} u^{\frac{s+1}{2}-1} e^{-u} du.
\end{equation*}
The result then follows directly from the definition~\eqref{migamma} with parameters $\frac{s+1}{2}$ and $\rho^2$.
  \end{proof}
  
 Exploiting the symmetry of the probability density $\widetilde{k}_{\sigma,\mu}$ defined in~\eqref{gwn=1},  
we first state a number of its fundamental properties.  
The next lemma shows that $\widetilde{k}_{\sigma,\mu}$ is indeed a probability density function,  
that is, it has unit zeroth moment.

\begin{lemma}\label{pd2}
For any $\mu \geq 1$ and $\sigma>0$, the function $\widetilde{k}_{\sigma,\mu}$ is a probability density function.
\end{lemma}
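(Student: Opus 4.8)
\emph{Proof proposal.}
The plan is to check the two defining properties of a probability density on $[-1,1]$: nonnegativity, and unit total mass; the second will simultaneously pin down the normalization constant $a_{\sigma,\mu}$ of \eqref{moddens1}.

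Nonnegativity is immediate from inspection of \eqref{gwn=1}. Since $\mu\ge 1$, the first argument $s=\tfrac{4\mu-3}{2\mu}$ is positive and $z=\tfrac{1}{2\sigma^{2\mu}}>0$, so the integrand $t^{s-1}e^{-t}$ in \eqref{migamma} is positive on $(0,z)$ and hence $\gamma^{\mathrm{mod}}(s,z)>0$; consequently the prefactor $\mu/\gamma^{\mathrm{mod}}\!\big(\tfrac{4\mu-3}{2\mu},\tfrac{1}{2\sigma^{2\mu}}\big)$ is strictly positive. The remaining factors $(t^2)^{2\mu-2}\ge 0$ and $e^{-\frac12(t^2/\sigma^2)^\mu}>0$ are nonnegative on $[-1,1]$, so $\widetilde k_{\sigma,\mu}\ge 0$ there.

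For the unit mass, I would use that $\widetilde k_{\sigma,\mu}$ is even and rewrite $(t^2/\sigma^2)^\mu=t^{2\mu}/\sigma^{2\mu}$, so that up to the prefactor the integral equals $2\int_0^1 (t^2)^{2\mu-2}e^{-t^{2\mu}/(2\sigma^{2\mu})}\,dt$. The exponent $t^{2\mu}$ is not yet a perfect square, so the key step is the substitution $w=t^\mu$, which maps $[0,1]$ onto itself and gives $dt=\tfrac1\mu w^{1/\mu-1}\,dw$ and $(t^2)^{2\mu-2}=w^{4-4/\mu}$. Collecting powers of $w$ turns the integral into $\tfrac1\mu\int_0^1 w^{s}e^{-(\rho w)^2}\,dw$ with $s=\tfrac{3\mu-3}{\mu}$ and $\rho^2=\tfrac{1}{2\sigma^{2\mu}}$; note $s\ge 0>-1$ because $\mu\ge1$, so Lemma~\ref{rho1} applies and yields $\tfrac12\gamma^{\mathrm{mod}}\!\big(\tfrac{s+1}{2},\tfrac{1}{2\sigma^{2\mu}}\big)$. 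Since $\tfrac{s+1}{2}=\tfrac{4\mu-3}{2\mu}$, putting the pieces together gives
\[
\int_{-1}^{1}\widetilde k_{\sigma,\mu}(t)\,dt=\frac{\mu}{\gamma^{\mathrm{mod}}\!\big(\tfrac{4\mu-3}{2\mu},\tfrac{1}{2\sigma^{2\mu}}\big)}\cdot 2\cdot\frac{1}{2\mu}\,\gamma^{\mathrm{mod}}\!\left(\tfrac{4\mu-3}{2\mu},\tfrac{1}{2\sigma^{2\mu}}\right)=1,
\]
as required. Running the same computation with the un-normalized $k_{\sigma,\mu}$ of \eqref{gwn=10} in place of $\widetilde k_{\sigma,\mu}$ and imposing $\int_{-1}^1 k_{\sigma,\mu}=1$ forces $a_{\sigma,\mu}=\mu/\gamma^{\mathrm{mod}}\!\big(\tfrac{4\mu-3}{2\mu},\tfrac{1}{2\sigma^{2\mu}}\big)$, i.e.\ \eqref{moddens1}, after rewriting $\gamma^{\mathrm{mod}}(s,z)=\gamma(s,z)/z^s$ with $z=1/(2\sigma^{2\mu})$.

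There is no genuine obstacle: once one recognizes that Lemma~\ref{rho1} is built precisely for integrals of this shape, the proof is a short substitution. The only points demanding care are the exponent bookkeeping — making sure the surviving power of $w$ is exactly $\tfrac{3\mu-3}{\mu}$, so that $\tfrac{s+1}{2}$ matches the first argument $\tfrac{4\mu-3}{2\mu}$ of $\gamma^{\mathrm{mod}}$ and the two $\gamma^{\mathrm{mod}}$ factors cancel — and the degenerate case $\mu=1$, where $w=t^\mu$ is the identity map and $\widetilde k_{\sigma,1}$ collapses to the doubly truncated normal displayed right after \eqref{gwn=1}.
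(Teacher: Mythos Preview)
Your proposal is correct and follows essentially the same route as the paper: use evenness, substitute $w=t^{\mu}$ to reduce to $\int_0^1 w^{(3\mu-3)/\mu}e^{-(\rho w)^2}\,dw$ with $\rho^2=1/(2\sigma^{2\mu})$, then apply Lemma~\ref{rho1} so that the resulting $\gamma^{\mathrm{mod}}\!\big(\tfrac{4\mu-3}{2\mu},\tfrac{1}{2\sigma^{2\mu}}\big)$ cancels the prefactor. Your explicit check of nonnegativity and the remark on the $\mu=1$ case are small additions the paper leaves implicit, but the argument is otherwise the same.
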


\begin{proof}
By symmetry and with the substitution $u = t^{\mu}$, we obtain
\begin{eqnarray}
  \int_{-1}^{1} \left(t^2\right)^{2\mu-2}  
   e^{-\frac{1}{2}\left(\frac{t^2}{\sigma^2}\right)^{\mu}} dt
   &=& 2 \int_{0}^{1} t^{4\mu-4}  
      e^{-\left(\frac{t^{\mu}}{\sqrt{2}\sigma^{\mu}}\right)^{2}} dt \notag \\ \label{iapaaa}
   &=& \frac{2}{\mu} \int_{0}^{1} 
      u^{\frac{3\mu-3}{\mu}} 
      e^{-\left(\frac{u}{\sqrt{2}\sigma^{\mu}}\right)^{2}} du.
\end{eqnarray}
Applying Lemma~\ref{rho1} with 
\begin{equation*}
s=\frac{3\mu-3}{\mu}, \quad     \rho=\frac{1}{\sqrt{2}\sigma^{\mu}},
\end{equation*}
we can write
\begin{equation}\label{sasasa1}
    \frac{2}{\mu} \int_{0}^{1} 
      u^{\frac{3\mu-3}{\mu}} 
      e^{-\left(\frac{u}{\sqrt{2}\sigma^{\mu}}\right)^{2}} du = \frac{1}{\mu}
   \gamma^{\mathrm{mod}}\left(\frac{4\mu-3}{2\mu},\frac{1}{2\sigma^{2\mu}}\right).
\end{equation}
Then, combining~\eqref{iapaaa} and~\eqref{sasasa1}, it follows that
\begin{equation}\label{n202522b}
     \int_{-1}^{1} \left(t^2\right)^{2\mu-2}  
   e^{-\frac{1}{2}\left(\frac{t^2}{\sigma^2}\right)^{\mu}} dt= \frac{1}{\mu} \gamma^{\mathrm{mod}}\left(\frac{4\mu-3}{2\mu},\frac{1}{2\sigma^{2\mu}}\right).
\end{equation}
Therefore, in order for $\widetilde{k}_{\sigma,\mu}$ to be a probability density function, the normalization constant $a_{\sigma,\mu}$ in~\eqref{moddens1} must be the reciprocal of the right-hand side of~\eqref{n202522b}, so that
\begin{equation*}
   \int_{-1}^{1} \widetilde{k}_{\sigma,\mu}(t) dt = 1. 
\end{equation*}
\end{proof}
  
The next lemma allows us to derive a compact formula for the moments 
of $\widetilde{k}_{\sigma,\mu}$ in terms of the modified incomplete gamma functions defined in~\eqref{migamma}.
  \begin{lemma}\label{mompd2}
  Let $\left\{t_{m,\sigma,\mu}\right\}_{m=0}^\infty$ be the sequence of moments of 
$\widetilde{k}_{\sigma,\mu}$. Then
     \begin{equation}\label{mom2}
     t_{m,\sigma,\mu}=  \int_{-1}^{1}t^m \widetilde{k}_{\sigma,\mu}(t) dt =\begin{cases}
			0, & \text{if $m=2k+1$ }\\
            \dfrac{   \gamma^{\mathrm{mod}}\left(\dfrac{2k+4\mu-3}{2\mu},\dfrac{1}{2\sigma^{2\mu}}\right)}{ \gamma^{\mathrm{mod}}\left(\dfrac{4\mu-3}{2\mu},\dfrac{1}{2\sigma^{2\mu}}\right)}, & \text{if $m=2k$}.
		 \end{cases}
     \end{equation}
  \end{lemma}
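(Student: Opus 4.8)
The plan is to compute the moments directly from the definition, exploiting the symmetry already used in Lemma~\ref{pd2}. For the odd case $m=2k+1$, the integrand $t^{2k+1}\widetilde{k}_{\sigma,\mu}(t)$ is an odd function on the symmetric interval $[-1,1]$, since $\widetilde{k}_{\sigma,\mu}$ is even in $t$; hence the integral vanishes, giving $t_{2k+1,\sigma,\mu}=0$.

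For the even case $m=2k$, I would substitute the explicit form~\eqref{gwn=1} of $\widetilde{k}_{\sigma,\mu}$ and write
\[
t_{2k,\sigma,\mu}
= a_{\sigma,\mu}\int_{-1}^{1} t^{2k}\left(t^2\right)^{2\mu-2}
e^{-\frac{1}{2}\left(\frac{t^2}{\sigma^2}\right)^{\mu}}\,dt
= 2\,a_{\sigma,\mu}\int_{0}^{1} t^{2k+4\mu-4}
e^{-\left(\frac{t^{\mu}}{\sqrt{2}\sigma^{\mu}}\right)^{2}}\,dt.
\]
Then, exactly as in the proof of Lemma~\ref{pd2}, I would apply the substitution $u=t^{\mu}$ to obtain
\[
2\,a_{\sigma,\mu}\int_{0}^{1} t^{2k+4\mu-4}
e^{-\left(\frac{t^{\mu}}{\sqrt{2}\sigma^{\mu}}\right)^{2}}\,dt
= \frac{2\,a_{\sigma,\mu}}{\mu}\int_{0}^{1} u^{\frac{2k+3\mu-3}{\mu}}
e^{-\left(\frac{u}{\sqrt{2}\sigma^{\mu}}\right)^{2}}\,du,
\]
and invoke Lemma~\ref{rho1} with $s=\frac{2k+3\mu-3}{\mu}$ and $\rho=\frac{1}{\sqrt{2}\sigma^{\mu}}$, which yields a factor $\tfrac{1}{2}\gamma^{\mathrm{mod}}\!\left(\frac{2k+4\mu-3}{2\mu},\frac{1}{2\sigma^{2\mu}}\right)$ since $\frac{s+1}{2}=\frac{2k+4\mu-3}{2\mu}$.

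Finally I would substitute the value of $a_{\sigma,\mu}$ from~\eqref{moddens1}, namely $a_{\sigma,\mu}=\mu/\gamma^{\mathrm{mod}}\!\left(\frac{4\mu-3}{2\mu},\frac{1}{2\sigma^{2\mu}}\right)$; the factors of $\mu$ and the powers of $2$ cancel, leaving precisely the ratio of modified incomplete gamma functions claimed in~\eqref{mom2}. I do not expect any real obstacle here: the argument is a direct adaptation of Lemma~\ref{pd2} (which is the special case $k=0$), and the only points requiring care are the bookkeeping of the exponents after the substitution $u=t^\mu$ and checking that the hypothesis $s>-1$ of Lemma~\ref{rho1} holds — which it does, since $\frac{2k+3\mu-3}{\mu}\ge \frac{3\mu-3}{\mu}\ge 0>-1$ for $\mu\ge 1$ and $k\ge 0$.
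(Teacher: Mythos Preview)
Your proposal is correct and follows essentially the same route as the paper: symmetry for the odd moments, the substitution $u=t^{\mu}$ for the even moments, an application of Lemma~\ref{rho1} with $s=\frac{2k+3\mu-3}{\mu}$ and $\rho=\frac{1}{\sqrt{2}\sigma^{\mu}}$, and then insertion of the normalization constant $a_{\sigma,\mu}$ from~\eqref{moddens1}. If anything, your version is slightly tidier in that you carry $a_{\sigma,\mu}$ from the outset and you explicitly verify the hypothesis $s>-1$ of Lemma~\ref{rho1}, which the paper leaves implicit.
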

   \begin{proof}
 By symmetry, $\widetilde{k}_{\sigma,\mu}$ is even, hence for all $k\geq 0$, it follows that
\begin{equation*}
   t_{2k+1,\sigma,\mu} = 0.
\end{equation*}
For the even moments, performing the change of variables $u = t^{\mu}$ gives
 \begin{eqnarray}
 t_{2k,\sigma,\mu}&=&
\int_{-1}^{1} t^{2k}\left(t^2\right)^{2\mu-2} 
   e^{-\frac{1}{2} \left(\frac{t^2}{\sigma^2}\right)^{\mu}} dt\notag\\&=& 2 
    \int_{0}^{1} t^{2k +4\mu-4}   e^{-\left(\frac{t^{\mu}}{\sqrt{2}\sigma^{\mu}}\right)^2}  dt \notag\\ \label{n2025222} &=&
    \frac{2}{\mu}
    \int_{0}^{1} u^{\frac{2k+3\mu-3}{\mu}}   e^{-\left(\frac{u}{\sqrt{2}\sigma^{\mu}}\right)^2}  du. 
  \end{eqnarray}
Applying Lemma~\ref{rho1} with 
\begin{equation*}
s = \frac{2k+3\mu-3}{\mu}, \quad     \rho = \frac{1}{\sqrt{2}\sigma^{\mu}},
\end{equation*}
we can write
\begin{equation}\label{sasasa}
    \frac{2}{\mu}
    \int_{0}^{1} u^{\frac{2k+3\mu-3}{\mu}}   e^{-\left(\frac{u}{\sqrt{2}\sigma^{\mu}}\right)^2}  du=\frac{1}{\mu}
\gamma^{\mathrm{mod}}\left(\frac{2k+4\mu-3}{2\mu}, \frac{1}{2\sigma^{2\mu}}\right).
\end{equation}
Then, combining~\eqref{n2025222} and~\eqref{sasasa}, it follows that
  \begin{equation}
    \int_{-1}^{1} t^{2k}\left(t^2\right)^{2\mu-2} 
   e^{-\frac{1}{2} \left(\frac{t^2}{\sigma^2}\right)^{\mu}} dt=\frac{1}{\mu}
\gamma^{\mathrm{mod}}\left(\frac{2k+4\mu-3}{2\mu}, \frac{1}{2\sigma^{2\mu}}\right). \label{n20252221}
  \end{equation}
Then, by recalling the explicit expression of the pdfs $\widetilde{k}_{\sigma,\mu}$ in~\eqref{gwn=1} and using~\eqref{n20252221}, we obtain
 \begin{eqnarray*}
      t_{2k,\sigma,\mu}&=& \int_{-1}^1 t^{2k} \widetilde{k}_{\sigma,\mu}(t) dt\\&=&\frac{\mu }{    \gamma^{\mathrm{mod}}\left(\frac{4\mu-3}{2\mu},\frac{1}{2\sigma^{2\mu}}\right)}\int_{-1}^{1} t^{2k}\left(t^2\right)^{2\mu-2} 
   e^{-\frac{1}{2} \left(\frac{t^2}{\sigma^2}\right)^{\mu}} dt\\&=&  \frac{   \gamma^{\mathrm{mod}}\left(\frac{2k+4\mu-3}{2\mu},\frac{1}{2\sigma^{2\mu}}\right)}{ \gamma^{\mathrm{mod}}\left(\frac{4\mu-3}{2\mu},\frac{1}{2\sigma^{2\mu}}\right)}.
  \end{eqnarray*}
  \end{proof}
 \begin{remark} By setting $k=0$ in~\eqref{mom2}, one immediately recovers the normalization condition, 
thus confirming that $\widetilde{k}_{\sigma,\mu}(t)$ is indeed a probability density function.
  \end{remark}

\begin{remark}
The polynomial ${p}_{\sigma,\mu}$ defined in~\eqref{LegPol} 
can be expressed in terms of the second moment as
\begin{equation}\label{was}
   p_{\sigma,\mu}(t) = t^2 - t_{2,\sigma,\mu}.
\end{equation}
Indeed, by taking $m=2$ in identity~\eqref{mom2}, one has
\begin{equation*}
       t_{2,\sigma,\mu} 
   = \frac{\gamma^{\mathrm{mod}}\left(\frac{4\mu-1}{2\mu},\frac{1}{2\sigma^{2\mu}}\right)}
          {\gamma^{\mathrm{mod}}\left(\frac{4\mu-3}{2\mu},\frac{1}{2\sigma^{2\mu}}\right)},
\end{equation*}
which directly yields~\eqref{was}.
\end{remark}
The limiting moments of $\widetilde{k}_{\sigma,\mu}(t)$ as $\sigma \to \infty$ 
are easily obtained from the following remark.

\begin{remark}
We observe that
\begin{equation*}
   \lim_{\sigma\to \infty}\int_{-1}^1 t^m \widetilde{k}_{\sigma,\mu}(t) dt= \int_{-1}^1 t^m \widetilde{k}_{\infty,\mu}(t) dt.
\end{equation*}
Indeed, from the definition~\eqref{w1inf} we have
\begin{equation*}
 \int_{-1}^{1} t^m \widetilde{k}_{\infty,\mu}(t) dt= \frac{4\mu-3}{2}\int_{-1}^1 t^m\left(t^2\right)^{2\mu-2}dt. 
\end{equation*}
If $m$ is odd, this integral vanishes by symmetry. If instead $m=2k$, a straightforward calculation yields
\begin{equation*}
     \int_{-1}^{1} t^{2k} \widetilde{k}_{\infty,\mu}(t) dt= \frac{4\mu-3}{2}\int_{-1}^1 t^{2k}\left(t^2\right)^{2\mu-2}dt=\frac{4\mu-3}{2k+4\mu-3}. 
\end{equation*}
On the other hand, using~\eqref{mom2} together with~\eqref{modlimgamma}, we find
\begin{equation*}
    t_{2k,\infty,\mu}:=\lim_{\sigma\to\infty} t_{2k,\sigma,\mu} = \frac{4\mu-3}{2k+4\mu-3},
\end{equation*}
which are exactly the moments of order $2k$ associated with the pdfs  $\widetilde{k}_{\infty,\mu}(t)$.
\end{remark}

The following result shows that $p_{\sigma,\mu}$, defined in~\eqref{LegPol}, 
is indeed an orthogonal polynomial.

\begin{lemma}\label{poq} 
The polynomial $p_{\sigma,\mu}(t)$ is orthogonal to all linear polynomials on $[-1,1]$ 
with respect to the pdfs $\widetilde{k}_{\sigma,\mu}$.
\end{lemma}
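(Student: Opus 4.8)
The plan is to reduce the claim to two elementary moment computations. Since $\mathbb{P}_1([-1,1])$ is spanned by the constant function $1$ and the monomial $t$, it suffices, by linearity of the integral, to verify that
\[
\int_{-1}^{1} p_{\sigma,\mu}(t)\,\widetilde{k}_{\sigma,\mu}(t)\,dt = 0
\qquad\text{and}\qquad
\int_{-1}^{1} t\,p_{\sigma,\mu}(t)\,\widetilde{k}_{\sigma,\mu}(t)\,dt = 0 .
\]

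For the first identity I would use the compact representation $p_{\sigma,\mu}(t) = t^2 - t_{2,\sigma,\mu}$ established in~\eqref{was}, rather than the original definition~\eqref{LegPol}. Expanding and invoking Lemma~\ref{mompd2}, the integral becomes $t_{2,\sigma,\mu} - t_{2,\sigma,\mu}\, t_{0,\sigma,\mu}$; since $t_{0,\sigma,\mu}=1$ by the normalization in Lemma~\ref{pd2} (equivalently, by taking $k=0$ in~\eqref{mom2}), this quantity vanishes.

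For the second identity the cleanest argument is a parity observation: $t\,p_{\sigma,\mu}(t) = t^3 - t_{2,\sigma,\mu}\,t$ is an odd polynomial, while $\widetilde{k}_{\sigma,\mu}$ is an even function by construction (it depends on $t$ only through $t^2$); hence the integrand is odd and its integral over the symmetric interval $[-1,1]$ is zero. Equivalently, the integral equals $t_{3,\sigma,\mu} - t_{2,\sigma,\mu}\,t_{1,\sigma,\mu}$, and both odd moments vanish by the first case of Lemma~\ref{mompd2}.

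There is no genuine obstacle in this lemma; the only point deserving attention is to invoke the representation~\eqref{was} at the outset, so that the ratio of modified incomplete gamma functions never has to be manipulated directly. Once orthogonality of $p_{\sigma,\mu}$ to both $1$ and $t$ is established, orthogonality to every element of $\mathbb{P}_1([-1,1])$ follows immediately by linearity, which completes the proof.
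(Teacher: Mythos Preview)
Your proof is correct and essentially the same as the paper's: the paper also reduces the claim to orthogonality against $1$ via~\eqref{was} and the normalization of $\widetilde{k}_{\sigma,\mu}$, dispatching the monomial $t$ by the same parity observation (phrased there as ``$p_{\sigma,\mu}(t)\widetilde{k}_{\sigma,\mu}(t)$ is even'').
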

\begin{proof}
Since $p_{\sigma,\mu}(t)\widetilde{k}_{\sigma,\mu}(t)$ is an even function, 
it suffices to show that
\begin{equation*}
\int_{-1}^1 p_{\sigma,\mu}(t)\widetilde{k}_{\sigma,\mu}(t)dt = 0.
\end{equation*}
Using the expression of $p_{\sigma,\mu}(t)$ from~\eqref{was}, we have
\begin{eqnarray*}
  \int_{-1}^1 p_{\sigma,\mu}(t) \widetilde{k}_{\sigma,\mu}(t) dt &=& \int_{-1}^1 \left( t^2 - t_{2,\sigma,\mu}\right) \widetilde{k}_{\sigma,\mu}(t)  dt \\
    &=&   \int_{-1}^1 t^2 \widetilde{k}_{\sigma,\mu}(t)dt - t_{2,\sigma,\mu} \int_{-1}^1 \widetilde{k}_{\sigma,\mu}(t)dt\\
    &=&0.  
\end{eqnarray*}
In the last equality we used the facts that
\begin{equation*}
    \int_{-1}^1 \widetilde{k}_{\sigma,\mu}(t) dt = 1,
\end{equation*}
since $\widetilde{k}_{\sigma,\mu}$ is a probability density function, and
\begin{equation*}
      \int_{-1}^1 t^2 \widetilde{k}_{\sigma,\mu}(t) dt = t_{2,\sigma,\mu},
\end{equation*}
which is precisely the second moment of $\widetilde{k}_{\sigma,\mu}$.
\end{proof}

We will need the average values of the barycentric coordinates on the edges of $T$, 
weighted by the probability density $\widetilde{k}_{\sigma,\mu}(t)$.

\begin{lemma}\label{I12}
For any $i,j=1,2,3$, it holds
\begin{equation}\label{ex32ssss}
{\mathcal{I}}_{j}^{\mathrm{enr}}\left(\lambda_i\right)
   = \frac{1}{2}\left(1-\delta_{ij}\right).
\end{equation}
\end{lemma}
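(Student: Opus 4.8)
The plan is to evaluate the weighted edge average $\mathcal{I}_j^{\mathrm{enr}}(\lambda_i)$ by exploiting the explicit parametrization of the edge $s_j$ and the special structure of the barycentric coordinates along it. First I would recall from the parametrization~\eqref{parm} of $s_j$ that the three barycentric coordinates, restricted to this edge, are given by
\begin{equation*}
  \lambda_{j+1}\!\left(\tfrac{1+t}{2}\B{v}_{j+1}+\tfrac{1-t}{2}\B{v}_{j+2}\right)=\frac{1+t}{2},\qquad
  \lambda_{j+2}\!\left(\tfrac{1+t}{2}\B{v}_{j+1}+\tfrac{1-t}{2}\B{v}_{j+2}\right)=\frac{1-t}{2},
\end{equation*}
while $\lambda_j$ vanishes identically on $s_j$ by~\eqref{propstar}. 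This follows from the affinity property~\eqref{prop2} together with the Kronecker delta property at the vertices, and it uses the cyclic convention $\B{v}_4=\B{v}_1$, $\B{v}_5=\B{v}_2$, $\lambda_4=\lambda_1$, $\lambda_5=\lambda_2$.

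Next I would substitute these expressions into the definition~\eqref{ex3qq} of $\mathcal{I}_j^{\mathrm{enr}}$ and treat the three cases according to whether $i=j$, $i=j+1$, or $i=j+2$. When $i=j$ the integrand is identically zero, so $\mathcal{I}_j^{\mathrm{enr}}(\lambda_j)=0$, which matches $\tfrac12(1-\delta_{jj})=0$. When $i\neq j$ we obtain either $\int_{-1}^1 \tfrac{1+t}{2}\,\widetilde{k}_{\sigma,\mu}(t)\,dt$ or $\int_{-1}^1 \tfrac{1-t}{2}\,\widetilde{k}_{\sigma,\mu}(t)\,dt$. Splitting each of these into its constant and linear parts, the linear term $\int_{-1}^1 t\,\widetilde{k}_{\sigma,\mu}(t)\,dt$ is the first moment $t_{1,\sigma,\mu}$, which vanishes by the oddness of $\widetilde{k}_{\sigma,\mu}$ (Lemma~\ref{mompd2} with $m=1$), while the constant term gives $\tfrac12\int_{-1}^1\widetilde{k}_{\sigma,\mu}(t)\,dt=\tfrac12$ since $\widetilde{k}_{\sigma,\mu}$ is a probability density (Lemma~\ref{pd2}). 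Hence $\mathcal{I}_j^{\mathrm{enr}}(\lambda_i)=\tfrac12$ whenever $i\neq j$, which is precisely $\tfrac12(1-\delta_{ij})$, completing all cases.

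This argument is essentially computational and presents no real obstacle; the only point requiring mild care is bookkeeping the cyclic indexing so that the roles of $\lambda_{j+1}$ and $\lambda_{j+2}$ on $s_j$ are correctly identified, and confirming that the symmetry of $\widetilde{k}_{\sigma,\mu}$ (already established via its even form in~\eqref{gwn=1}) is what kills the linear contribution uniformly in $i$ and $j$. I would conclude by noting that the resulting identity~\eqref{ex32ssss} is exactly what is needed later to verify the interpolation conditions~\eqref{c1}--\eqref{c4} for the basis functions $\varphi_i=1-2\lambda_i$ in Theorem~\ref{th2allalf3r}.
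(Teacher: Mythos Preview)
Your proposal is correct and follows essentially the same route as the paper's proof: parametrize the edge, use affinity and the Kronecker property of the barycentric coordinates to reduce the integrand to $(1\pm t)/2$ (or zero when $i=j$), and conclude by the normalization and evenness of $\widetilde{k}_{\sigma,\mu}$. If anything, you are slightly more explicit than the paper in separating the vanishing first moment (via Lemma~\ref{mompod2}/\ref{mompd2}) from the unit zeroth moment (Lemma~\ref{pd2}), whereas the paper collapses this into a single step.
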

\begin{proof}
From~\eqref{ex3qq}, we have
   \begin{equation*}
\mathcal{I}_{j}^{\mathrm{enr}}\left(\lambda_i\right)=\int_{-1}^1  \lambda_{i}\left(\frac{1+t}{2}\boldsymbol{v}_{j+1}+ \frac{1-t}{2} \boldsymbol{v}_{j+2}\right) \widetilde{k}_{\sigma,\mu}(t) dt.
   \end{equation*}
If $i=j$, then by~\eqref{propstar} it follows immediately that
\begin{equation*}
{\mathcal{I}}_{j}^{\mathrm{enr}}\left(\lambda_{j}\right)=0, \quad j=1,2,3.
\end{equation*}
If instead $i \neq j$, then one of the values 
$\lambda_i(\boldsymbol{v}_{j+1})$ or $\lambda_i(\boldsymbol{v}_{j+2})$ equals $1$, 
while the other equals $0$. Since $\widetilde{k}_{\sigma,\mu}$ is a probability density function  
and barycentric coordinates are affine functions, we get
    \begin{align*}
{\mathcal{I}}_{j}^{\mathrm{enr}}\left(\lambda_i\right)&=\int_{-1}^1  \left(\frac{1+t}{2}\lambda_i\left(\boldsymbol{v}_{j+1}\right)+ \frac{1-t}{2} \lambda_{i}\left(\boldsymbol{v}_{j+2}\right)\right) \widetilde{k}_{\sigma,\mu}(t) dt \notag\\
&= \frac{1}{2}\int_{-1}^1 \left((1+t)\lambda_i\left(\boldsymbol{v}_{j+1}\right)+(1-t) \lambda_{i}\left(\boldsymbol{v}_{j+2}\right)\right) \widetilde{k}_{\sigma,\mu}(t) dt\\
&=  \frac{1}{2}\int_{-1}^1 \widetilde{k}_{\sigma,\mu}(t) dt=\frac{1}{2},
    \end{align*}
where in the last equality we have used the fact that  $\widetilde{k}_{\sigma,\mu}$ is a probability density.
 Then the lemma is proved.
\end{proof}

We also need explicit expressions for the average values of the square of the barycentric coordinates
on each edge of $T$, weighted by $\widetilde{k}_{\sigma,\mu}(t)$.
\begin{lemma}\label{I22}
  For any $i,j=1,2,3$, it holds 
\begin{equation}\label{ex32as2}
{\mathcal{I}}_{j}^{\mathrm{enr}}\left(\lambda_i^2\right)=\frac{1+t_{2,\sigma,\mu} }{4}
 \left(1-\delta_{ij}\right),
\end{equation} 
where $t_{2,\sigma,\mu}$ is the second moment  of $\widetilde{k}_{\sigma,\mu}$.
\end{lemma}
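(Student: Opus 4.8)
The plan is to compute $\mathcal{I}_j^{\mathrm{enr}}(\lambda_i^2)$ directly by substituting the edge parametrization into the definition~\eqref{ex3qq}, exactly as was done in the proof of Lemma~\ref{I12}, but now carrying the square. As in that proof, I would split into the cases $i=j$ and $i\neq j$. When $i=j$, property~\eqref{propstar} gives $\lambda_j \equiv 0$ on the edge $s_j$, so $\lambda_j^2$ vanishes identically there and hence $\mathcal{I}_j^{\mathrm{enr}}(\lambda_j^2)=0$, which matches the right-hand side of~\eqref{ex32as2} since $1-\delta_{jj}=0$. This disposes of the diagonal case immediately.

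For the off-diagonal case $i\neq j$, the key observation is that on the edge $s_j$ one of $\lambda_i(\B{v}_{j+1})$, $\lambda_i(\B{v}_{j+2})$ equals $1$ and the other equals $0$, by the Kronecker delta property at the vertices. Using the affine interpolation~\eqref{prop2}, the restriction of $\lambda_i$ to $s_j$ under the parametrization~\eqref{parm} is therefore either $\tfrac{1+t}{2}$ or $\tfrac{1-t}{2}$; in either case its square equals $\left(\tfrac{1\pm t}{2}\right)^2 = \tfrac{1}{4}(1 \pm 2t + t^2)$. Integrating against the probability density $\widetilde{k}_{\sigma,\mu}$ and using that $\widetilde{k}_{\sigma,\mu}$ is even (so the linear term integrates to zero), I obtain
\begin{equation*}
\mathcal{I}_j^{\mathrm{enr}}(\lambda_i^2)
= \frac{1}{4}\int_{-1}^{1}\left(1+t^2\right)\widetilde{k}_{\sigma,\mu}(t)\,dt
= \frac{1}{4}\left(1 + t_{2,\sigma,\mu}\right),
\end{equation*}
where the last equality uses $\int_{-1}^1 \widetilde{k}_{\sigma,\mu}(t)\,dt = 1$ (Lemma~\ref{pd2}) and the definition of the second moment $t_{2,\sigma,\mu}$ (Lemma~\ref{mompd2}). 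Combining the two cases yields the factor $1-\delta_{ij}$ and completes the proof.

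There is essentially no obstacle here: the argument is a routine extension of Lemma~\ref{I12}, the only new ingredient being the appearance of the $t^2$ term whose integral is precisely the second moment already computed in Lemma~\ref{mompd2}. The one point to be careful about is to invoke the parity of $\widetilde{k}_{\sigma,\mu}$ so that the cross term $2t$ drops out; this parity is immediate from the explicit form~\eqref{gwn=1}, in which $t$ appears only through $t^2$.
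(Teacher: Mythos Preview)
Your proposal is correct and follows essentially the same route as the paper: split into $i=j$ (where $\lambda_j$ vanishes on $s_j$) and $i\neq j$ (where the edge restriction is $\tfrac{1\pm t}{2}$), expand the square, and use the evenness of $\widetilde{k}_{\sigma,\mu}$ to kill the linear term, leaving the zeroth and second moments. The paper's presentation differs only cosmetically in that it keeps the expression $\left(\tfrac{1+t}{2}\lambda_i(\B v_{j+1})+\tfrac{1-t}{2}\lambda_i(\B v_{j+2})\right)^2$ before specialising, but the substance is identical.
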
 
\begin{proof}
From~\eqref{ex3qq}, we have
\begin{equation*}
{\mathcal{I}}_{j}^{\mathrm{enr}}\left(\lambda_i^2\right)=\int_{-1}^1  \lambda_{i}^2\left(\frac{1+t}{2}\boldsymbol{v}_{j+1}+ \frac{1-t}{2} \boldsymbol{v}_{j+2}\right) \widetilde{k}_{\sigma,\mu}(t) dt.
    \end{equation*} 
If $i=j$, then by~\eqref{propstar}, it follows immediately that 
\begin{equation*}
{\mathcal{I}}_{j}^{\mathrm{enr}}\left(\lambda_{j}^2\right)=0, \quad j=1,2,3.
\end{equation*}
If instead $i \neq j$, we obtain
    \begin{align}
{\mathcal{I}}_{j}^{\mathrm{enr}}\left(\lambda_i^2\right)&=\int_{-1}^1 \left(\frac{1+t}{2}\lambda_i\left(\boldsymbol{v}_{j+1}\right)+ \frac{1-t}{2} \lambda_{i}\left(\boldsymbol{v}_{j+2}\right)\right)^2 \widetilde{k}_{\sigma,\mu}(t) dt \notag\\
&= \frac{1}{4}\int_{-1}^1 \left((1+t)\lambda_i\left(\boldsymbol{v}_{j+1}\right)+(1-t) \lambda_{i}\left(\boldsymbol{v}_{j+2}\right)\right)^2 \widetilde{k}_{\sigma,\mu}(t) dt. \label{ulteqs2}
    \end{align}
Proceeding as in the proof of Lemma~\ref{I12}, the integral in~\eqref{ulteqs2} decomposes as
    \begin{equation*}
{\mathcal{I}}_{j}^{\mathrm{enr}}\left(\lambda_i^2\right)=\frac{1}{4}\int_{-1}^1 \widetilde{k}_{\sigma,\mu}(t) dt+\frac{1}{4}\int_{-1}^1 t^2 \widetilde{k}_{\sigma,\mu}(t)dt.
    \end{equation*}
    Since $\widetilde{k}_{\sigma,\mu}$ is a probability density, the first integral equals $1$,
while the second equals $t_{2,\sigma,\mu}$. This proves~\eqref{ex32as2}.
\end{proof}

The following result is an immediate consequence of Lemma~\ref{poq}.

\begin{lemma}\label{lem22}
For any $i,j=1,2,3$, the following identity holds
\begin{equation}\label{F12}
{\mathcal{L}}_{j}^{\mathrm{enr}}\left(\lambda_i\right)=0.
\end{equation}
\end{lemma}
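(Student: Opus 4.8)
The plan is to obtain the identity as a direct corollary of the orthogonality of $p_{\sigma,\mu}$ established in Lemma~\ref{poq}. First I would unwind the definition~\eqref{ex3qqq} of the functional $\mathcal{L}_j^{\mathrm{enr}}$ applied to $f=\lambda_i$, which gives
\[
  \mathcal{L}_j^{\mathrm{enr}}(\lambda_i)
  = \int_{-1}^1 p_{\sigma,\mu}(t)\,
    \lambda_i\!\left(\tfrac{1+t}{2}\B{v}_{j+1} + \tfrac{1-t}{2}\B{v}_{j+2}\right)
    \widetilde{k}_{\sigma,\mu}(t)\, dt .
\]
Then I would use the affinity property~\eqref{prop2} of the barycentric coordinates to rewrite the restriction of $\lambda_i$ to the parametrized edge $s_j$ as
\[
  \lambda_i\!\left(\tfrac{1+t}{2}\B{v}_{j+1} + \tfrac{1-t}{2}\B{v}_{j+2}\right)
  = \tfrac{1+t}{2}\,\lambda_i(\B{v}_{j+1}) + \tfrac{1-t}{2}\,\lambda_i(\B{v}_{j+2}),
\]
which, for every choice of $i$ and $j$, is a polynomial of degree at most one in the variable $t$.

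Since, by Lemma~\ref{poq}, $p_{\sigma,\mu}$ is orthogonal to all linear polynomials on $[-1,1]$ with respect to the density $\widetilde{k}_{\sigma,\mu}$, the weighted integral of $p_{\sigma,\mu}$ against this polynomial of degree at most one vanishes identically, giving $\mathcal{L}_j^{\mathrm{enr}}(\lambda_i)=0$ for all $i,j=1,2,3$. If one prefers to avoid quoting Lemma~\ref{poq} directly, the same conclusion follows by splitting the integral into its constant and linear-in-$t$ parts: the constant part is a multiple of $\int_{-1}^1 p_{\sigma,\mu}(t)\widetilde{k}_{\sigma,\mu}(t)\,dt$, which is zero because $p_{\sigma,\mu}(t)=t^2-t_{2,\sigma,\mu}$ by~\eqref{was} and the second moment cancels it; the linear part has an odd integrand (the product of $t$ with the even function $p_{\sigma,\mu}\widetilde{k}_{\sigma,\mu}$) and hence also vanishes.

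There is essentially no obstacle: the statement is a genuine corollary of Lemma~\ref{poq}, and the only verification needed is the elementary one that the edge restriction of $\lambda_i$ is affine in $t$, which is exactly~\eqref{prop2}. The one point worth emphasizing is that, in contrast to the proofs of Lemmas~\ref{I12} and~\ref{I22}, no case distinction between $i=j$ and $i\neq j$ is required here, since orthogonality annihilates the entire linear polynomial regardless of the values $\lambda_i(\B{v}_{j+1})$ and $\lambda_i(\B{v}_{j+2})$.
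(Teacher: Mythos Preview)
Your proof is correct and follows exactly the paper's approach: the paper's own proof simply states that the result follows directly from the definition~\eqref{ex3qqq} together with the orthogonality property of $p_{\sigma,\mu}$ from Lemma~\ref{poq}, which is precisely the argument you give (with more detail). The additional alternative you offer, splitting into constant and linear parts, just re-proves Lemma~\ref{poq} inline and is not needed.
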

\begin{proof}
The result follows directly from the definition of 
$\mathcal{L}_{j}^{\mathrm{enr}}$ given in~\eqref{ex3qqq}, together with the 
orthogonality property of the polynomial $p_{\sigma,\mu}(t)$ established 
in Lemma~\ref{poq}.
\end{proof}

\begin{remark}
From the definition of $\mathcal{L}_{j}^{\mathrm{enr}}$ and the 
orthogonality of the polynomial $p_{\sigma,\mu}$ established in 
Lemma~\ref{poq}, it follows that
\begin{equation*}
\mathcal{L}_{j}^{\mathrm{enr}}\left(p_1\right)=0, 
\quad j=1,2,3,
\end{equation*}
for every linear polynomial $p_1 \in \mathbb{P}_{1}(T)$.
\end{remark}

\begin{lemma}\label{F22}
For any $i,j=1,2,3$, the following identity holds
\begin{equation}\label{ex3212}
{\mathcal{L}}_{j}^{\mathrm{enr}}\left(\lambda_i^2\right)= 
\frac{\left\|p_{\sigma,\mu}\right\|^2_{2,{{\sigma,\mu}}}}{4} \left(1-\delta_{ij}\right).
\end{equation}  
\end{lemma}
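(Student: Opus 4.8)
\textbf{Proof plan for Lemma~\ref{F22}.}
The plan is to mimic the computation carried out in Lemma~\ref{I22}, but now with the extra weight $p_{\sigma,\mu}(t)$ inside the integral. First I would unfold the definition of $\mathcal{L}_{j}^{\mathrm{enr}}$ from~\eqref{ex3qqq}, writing
\[
{\mathcal{L}}_{j}^{\mathrm{enr}}\left(\lambda_i^2\right)=\int_{-1}^1 p_{\sigma,\mu}(t)\,\lambda_{i}^2\!\left(\tfrac{1+t}{2}\B v_{j+1}+\tfrac{1-t}{2}\B v_{j+2}\right)\widetilde{k}_{\sigma,\mu}(t)\,dt .
\]
For the diagonal case $i=j$, property~\eqref{propstar} gives $\lambda_j\equiv 0$ on $s_j$, so the integrand vanishes identically and ${\mathcal{L}}_{j}^{\mathrm{enr}}(\lambda_j^2)=0$, matching the factor $(1-\delta_{ij})$. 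For the off-diagonal case $i\neq j$, exactly one of $\lambda_i(\B v_{j+1})$, $\lambda_i(\B v_{j+2})$ equals $1$ and the other $0$; using affineness of $\lambda_i$ along the edge, $\lambda_i$ restricted to $s_j$ becomes either $\tfrac{1+t}{2}$ or $\tfrac{1-t}{2}$, so in either case
\[
\lambda_i^2\!\left(\tfrac{1+t}{2}\B v_{j+1}+\tfrac{1-t}{2}\B v_{j+2}\right)=\tfrac{1}{4}(1\pm t)^2=\tfrac{1}{4}\left(1\pm 2t+t^2\right).
\]

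Next I would substitute this expression and split the integral into three pieces. The linear term $\pm\tfrac{1}{2}t$ contributes $\pm\tfrac12\int_{-1}^1 t\,p_{\sigma,\mu}(t)\widetilde{k}_{\sigma,\mu}(t)\,dt$, which vanishes because $p_{\sigma,\mu}(t)\widetilde{k}_{\sigma,\mu}(t)$ is even (as noted in the proof of Lemma~\ref{poq}) while $t$ is odd. The constant term contributes $\tfrac14\int_{-1}^1 p_{\sigma,\mu}(t)\widetilde{k}_{\sigma,\mu}(t)\,dt$, which is $0$ by the orthogonality of $p_{\sigma,\mu}$ to constants established in Lemma~\ref{poq}. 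Hence only the $t^2$ term survives, giving
\[
{\mathcal{L}}_{j}^{\mathrm{enr}}\left(\lambda_i^2\right)=\frac{1}{4}\int_{-1}^1 t^2\,p_{\sigma,\mu}(t)\widetilde{k}_{\sigma,\mu}(t)\,dt ,\qquad i\neq j .
\]

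Finally I would identify this remaining integral with $\tfrac14\|p_{\sigma,\mu}\|^2_{2,\sigma,\mu}$. Writing $p_{\sigma,\mu}(t)=t^2-t_{2,\sigma,\mu}$ from~\eqref{was}, we have $t^2\,p_{\sigma,\mu}(t)=p_{\sigma,\mu}(t)^2+t_{2,\sigma,\mu}\,p_{\sigma,\mu}(t)$; integrating against $\widetilde{k}_{\sigma,\mu}$ and using $\int_{-1}^1 p_{\sigma,\mu}(t)\widetilde{k}_{\sigma,\mu}(t)\,dt=0$ (Lemma~\ref{poq}) kills the second term, leaving exactly $\int_{-1}^1 p_{\sigma,\mu}^2(t)\widetilde{k}_{\sigma,\mu}(t)\,dt=\|p_{\sigma,\mu}\|^2_{2,\sigma,\mu}$. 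Combining the diagonal and off-diagonal cases yields~\eqref{ex3212}. There is no real obstacle here: the only point requiring a little care is to invoke \emph{both} the parity of $p_{\sigma,\mu}\widetilde{k}_{\sigma,\mu}$ and the orthogonality-to-constants from Lemma~\ref{poq} to discard the linear and constant contributions, after which the identification with the weighted $L^2$-norm is a one-line algebraic manipulation using~\eqref{was}.
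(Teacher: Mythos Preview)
Your proof is correct and follows essentially the same approach as the paper's own proof: both reduce the off-diagonal case to $\tfrac14\int_{-1}^1 t^2\,p_{\sigma,\mu}(t)\widetilde{k}_{\sigma,\mu}(t)\,dt$ via the orthogonality of $p_{\sigma,\mu}$ to linear polynomials (Lemma~\ref{poq}), and then identify this integral with $\tfrac14\|p_{\sigma,\mu}\|^2_{2,\sigma,\mu}$ using the decomposition $t^2=p_{\sigma,\mu}(t)+t_{2,\sigma,\mu}$ from~\eqref{was}. The only cosmetic difference is that you expand $(1\pm t)^2$ and treat the constant and linear terms separately (via orthogonality and parity, respectively), whereas the paper dispatches both at once by invoking orthogonality to all of $\mathbb{P}_1$.
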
 
\begin{proof}
From equation~\eqref{ex3qqq} and the fact that barycentric coordinates are affine functions, we obtain
\begin{align*}
{\mathcal{L}}_{j}^{\mathrm{enr}}\left(\lambda_{i}^2\right)&= \int_{-1}^{1} p_{\sigma,\mu}(t)
  \lambda_i^2\left(\frac{1+t}{2}\B{v}_{j+1} + \frac{1-t}{2}\B{v}_{j+2}\right)\widetilde{k}_{\sigma,\mu}(t)dt \notag\\ &= \int_{-1}^{1} p_{\sigma,\mu}(t)
  \left(\frac{1+t}{2}\lambda_i\left(\B{v}_{j+1}\right) + \frac{1-t}{2} \lambda_i\left(\B{v}_{j+2}\right)\right)^2 \widetilde{k}_{\sigma,\mu}(t)dt.
\end{align*}
If $i=j$, then by~\eqref{propstar} it follows that  
\begin{equation*}
{\mathcal{L}}_{j}^{\mathrm{enr}}\left(\lambda_{j}^2\right)=0, \quad j=1,2,3.
\end{equation*}
If instead $i\neq j$, then $\lambda_i\left(\B{v}_{j+1}\right)=1$ or $\lambda_i\left(\B{v}_{j+2}\right)=1$, and one of them must be zero. 
Using this fact, and exploiting the orthogonality 
property of $p_{\sigma,\mu}$ established in Lemma~\ref{poq}, together with the identity~\eqref{was} expressing $t^2$ in terms of $p_{\sigma,\mu}$, we obtain
\begin{align*} {\mathcal{L}}_{j}^{\mathrm{enr}}\left(\lambda_{i}^2\right)&=  \frac{1}{4}\int_{-1}^{1} p_{\sigma,\mu}(t) t^2\widetilde{k}_{\sigma,\mu}(t)dt\\ 
&=\frac{1}{4}\int_{-1}^{1} p_{\sigma,\mu}(t) \left(p_{\sigma,\mu}(t)+t_{2,\sigma,\mu} \right) 
\widetilde{k}_{\sigma,\mu}(t)dt.
\end{align*}
Using again the orthogonality property of $p_{\sigma,\mu}$
the desired result follows.
\end{proof}

We are now ready for the proof that the triple  $\mathcal{H}^{\mathrm{enr}}_{1,\sigma,\mu}$ is unisolvent. To this aim, we first introduce some notations that will be used 
throughout the remainder of the paper. In particular, we consider the matrix
\begin{equation}\label{MA}
A:=  \begin{bmatrix}
0 & 1 & 1\\
1 & 0& 1\\
1 & 1 & 0
\end{bmatrix}.
\end{equation}
As $\det(A)=2\neq 0$, the matrix $A$ is invertible, with inverse given by
\begin{equation}\label{IMA}
A^{-1}= \frac{1}{2} \begin{bmatrix}
-1 & 1 & 1\\
1 & -1 & 1\\
1 & 1 & -1
\end{bmatrix}.
\end{equation}

\noindent
\textbf{Proof of Theorem~\ref{th1nnewfin11}.} 
It is sufficient to prove that the only polynomial 
$p_2\in \mathbb{P}_{2}(T)$ satisfying
\begin{align}
\mathcal{I}_{j}^{\mathrm{enr}}\!\left(p_2\right)&=0, \quad j=1,2,3, \label{conds2}\\  
\mathcal{L}_{j}^{\mathrm{enr}}\!\left(p_2\right)&=0, \quad j=1,2,3, \label{conds12}
\end{align} 
is the zero polynomial. To this aim, let $p_2\in\mathbb{P}_2(T)$, which can be expressed in barycentric form as
\begin{equation}\label{polptild}
p_2=\sum_{i=1}^3a_i \lambda_i+\sum_{i=1}^3 b_i \lambda_i^2,   
 \end{equation}
with coefficients $a_i,b_i\in\mathbb{R}$, $i=1,2,3$.
By Lemmas~\ref{lem22} and~\ref{F22}, condition~\eqref{conds12} reduces to
\begin{align*}
0 = {\mathcal{L}}^{\mathrm{enr}}_{j}\left(p_2\right) &=\sum_{\substack{i=1}}^{3} b_{i} \mathcal{L}^{\mathrm{enr}}_{j}\left(\lambda^2_i\right)=\frac{ \left\|p_{\sigma,\mu}\right\|^2_{{{2,\sigma,\mu}}} }{4}\sum_{\substack{i=1}}^{3} b_{i}\left(1-\delta_{ij}\right).
\end{align*}
This can equivalently be written in matrix form as
\begin{equation*}
 \dfrac{ \left\|p_{\sigma,\mu}\right\|^2_{2,{{\sigma,\mu}}} }{4} A{\B b}={\B 0},
\end{equation*}
where $A$ is the matrix defined in~\eqref{MA}, and
 \begin{equation*}
     {\B b}^T=\left(b_1,b_2,b_3\right), \quad {\B 0}^T=(0,0,0).
 \end{equation*}
Since $A$ is non-singular, we conclude that
\begin{equation*}
    b_1=b_2=b_3=0.
\end{equation*}
Consequently, by~\eqref{polptild}, the polynomial $p_2$ reduces to
\begin{equation*}
p_2=p_1= a_1\lambda_1+a_2\lambda_2+a_3\lambda_3, \quad a_1,a_2,a_3\in\mathbb{R}.
\end{equation*}
Applying Lemma~\ref{I12}, condition~\eqref{conds2} becomes
\begin{align*}
0 &={\mathcal{I}}_{j}^{\mathrm{enr}}\left(p_2\right) = \frac{1}{2}\sum_{\substack{i=1}}^{3} a_{i}\left(1-\delta_{ij}\right).
\end{align*}
This can equivalently be written in matrix form as
\begin{equation}\label{mf2}
\frac{1}{2} A{\B a}={\B 0},
\end{equation}
 where 
 \begin{equation*}
 {\B a}^T=\left(a_1,a_2,a_3\right). 
 \end{equation*}
Since $A$ is non-singular, system~\eqref{mf2} admits only the trivial solution
\begin{equation*}
    a_1=a_2=a_3=0,
\end{equation*}
which implies $p_2=0$ and thus completes the proof. \qed

The proof of Theorem~\ref{th2allalf} relies on the derivation of the explicit
analytic expressions of the basis functions $\varphi_{i}$ and $\psi_{i}$.
We illustrate the procedure by computing $\varphi_{1}$ and $\psi_{1}$,
as the remaining cases follow in the same way.\\

\noindent
\textbf{Proof of Theorem~\ref{th2allalf3r}. }\\  
We begin by deriving the analytic expression of the basis function $\varphi_{1}$. 
According to its definition, $\varphi_{1}$ can be expressed as 
\begin{equation}\label{phi1old2}
\varphi_{1} = \sum_{i=1}^3 a_{i}\lambda_i + \sum_{i=1}^3 b_{i}\lambda_i^2,
\end{equation}
for some coefficients $a_i,b_i\in\mathbb{R}$, $i=1,2,3$. 
To determine these coefficients, we use condition~\eqref{c2} together with 
Lemmas~\ref{lem22} and~\ref{F22}, which yield
\begin{equation*}
    0 = \mathcal{L}^{\mathrm{enr}}_{j}\left(\varphi_{1}\right)
  = \sum_{i=1}^{3} b_{i}\mathcal{L}^{\mathrm{enr}}_{j}\left(\lambda^2_i\right) = \frac{\|p_{\sigma,\mu}\|^2_{2,{\sigma,\mu}}}{4}\sum_{i=1}^{3} b_{i}\left(1-\delta_{ij}\right).
\end{equation*}
As in the previous proof, this system admits only the trivial solution
\begin{equation*}
b_1=b_2=b_3=0.
\end{equation*}
Substituting these values into~\eqref{phi1old2}, $\varphi_{1}$ reduces to
\begin{equation*}
\varphi_{1}=\sum_{i=1}^3 a_{i}\lambda_i.
\end{equation*}
Applying Lemma~\ref{I12}, condition~\eqref{c1} becomes
\begin{align*}
\delta_{1j} &={\mathcal{I}}^{\mathrm{enr}}_{j}\left(\varphi_{1}\right) =\sum_{\substack{i=1}}^{3} a_{i} {\mathcal{I}}^{\mathrm{enr}}_{j}\left(\lambda_i\right)=\frac{1}{2} \sum_{\substack{i=1}}^{3} a_{i}\left(1-\delta_{ij}\right).
\end{align*}
This is equivalently written in matrix form as
\begin{equation*}
    \left(1,0,0\right)^T=\frac{1}{2}A  {\B a}
\end{equation*}
where 
 \begin{equation*}
 {\B a}^T=\left(a_1,a_2,a_3\right). 
 \end{equation*}
A straightforward computation yields
\begin{equation*}
     a_1=-1, \quad a_2=a_3=1,
\end{equation*}
and hence
\begin{equation*}
\varphi_{1} = 1-2\lambda_1.
\end{equation*}
It remains to compute the expression of the basis function $\psi_{1}$. 
By definition, it can be expressed as
\begin{equation}\label{phi1}
\psi_{1} = \sum_{i=1}^3 c_{i}\lambda_i + \sum_{i=1}^3 d_{i} \lambda_i^2, 
\end{equation}
for some coefficients $c_i,d_i\in\mathbb{R}$, $i=1,2,3$. 
To determine these coefficients, we use condition~\eqref{c4} together with 
Lemmas~\ref{lem22} and~\ref{F22}, which yield
\begin{align*}
\delta_{1j} &= {\mathcal{L}}^{\mathrm{enr}}_{j}\left(\psi_{1}\right)= \sum_{\substack{i=1}}^{3} d_{i} {\mathcal{L}}^{\mathrm{enr}}_{j}\left(\lambda_i^2\right)=\frac{ \left\|p_{\sigma,\mu}\right\|^2_{2,{{\sigma,\mu}}} }{4}
\sum_{\substack{i=1}}^{3} d_{i}\left(1-\delta_{ij}\right).
\end{align*}
This is equivalently written in matrix form as
\begin{equation*}
    \left(1,0,0\right)^T=\frac{ \left\|p_{\sigma,\mu}\right\|^2_{2,{{\sigma,\mu}}} }{4} A  {\B d}
\end{equation*}
where 
 \begin{equation*}
 {\B d}^T=\left(d_1,d_2,d_3\right). 
 \end{equation*}
Solving this system, we have
\begin{equation*}
    d_1=-\frac{2}{\left\|p_{\sigma,\mu}\right\|^2_{2,{{\sigma,\mu}} }}, \quad d_2=d_3=\frac{2}{\left\|p_{\sigma,\mu}\right\|^2_{2,{{\sigma,\mu}} }}.
\end{equation*}
Substituting these values into~\eqref{phi1}, $\psi_{1}$ reduces to
\begin{equation*} 
\psi_{1}=\sum_{i=1}^3 c_{i} \lambda_i +\frac{2}{\left\|p_{\sigma,\mu}\right\|^2_{2,{{\sigma,\mu}} }}\left(-\lambda_1^2+\lambda^2_2+\lambda^2_3\right). 
\end{equation*}
Applying Lemmas~\ref{I12} and~\ref{I22}, condition~\eqref{c3} yields
\begin{align*}
0 &=  {\mathcal{I}}^{\mathrm{enr}}_{j}\left(\psi_{1}\right) =\frac{1}{2} \sum_{\substack{i=1}}^{3} c_{i}\left(1-\delta_{ij}\right)+A_{\sigma,\mu}\left(1-\delta_{ij}\right),
\end{align*}
where $A_{\sigma,\mu}$ is defined in~\eqref{Asigma}.  
This system admits the unique solution
\begin{equation*}
     c_{1}=A_{\sigma,\mu}, \quad c_2=c_3=-A_{\sigma,\mu}. 
\end{equation*}
Hence $\psi_{1}$ takes the form
\begin{equation*}
        \psi_{1}= - A_{\sigma,\mu}\varphi_{1}+\frac{2}{\left\|p_{\sigma,\mu}\right\|^2_{2,{{\sigma,\mu}} }}\left(-\lambda_1^2+\lambda^2_2+\lambda^2_3\right),
\end{equation*}
which concludes the proof.
\qed

\subsection{Second family of distributions}\label{ss2}
We now present a second two-parameter family of exponential-type distributions, which generalizes the truncated normal distribution, and is given by the following bivariate function with parameters $\mu \geq 1$ and $\sigma > 0$
 \begin{equation}\label{êxp3}
  G_{\sigma,\mu}(\B x) =  \frac{(2\mu-1)}{ \gamma^{\mathrm{mod}}\left(\frac{1}{2}, \frac{1}{2\sigma^{4\mu-2}} \right)}
 H^{\mu -1}(\B x)
 e^{ -\frac{1}{2}   \left(\frac{H(\B x)}{\sigma^2} \right)^{2\mu-1} },
\end{equation}
where $H$ is given by~\eqref{gw}. Unlike the function of the first family,
$K_{\sigma,\mu}$ in~\eqref{exp2}, the function $G_{\sigma,\mu}$ does not, in general,
define a weight function for all values of $\mu$, because $H$ changes sign on $T$. In fact, at the barycenter $$\displaystyle \bar{\B v}:=\frac{1}{3}\left(\B v_1+\B v_2+\B v_3\right)$$
one has $H(\bar{\B v})=-\frac{1}{3}$, hence
$$H^{\mu-1}(\bar{\B v})=\left(-\frac{1}{3}\right)^{\mu-1},$$ which is negative whenever $\mu$ is an integer and
$\mu-1$ is odd. 

With the normalization in~\eqref{êxp3}, the restriction of $G_{\sigma,\mu}$ to any edge
of $T$ yields a \emph{probability density function} on $[-1,1]$ (see Lemma~\ref{d1}).
Indeed, using~\eqref{prop2} and~\eqref{propstar} we obtain
\begin{align}
 \widetilde{g}_{\sigma,\mu}(t)
 &= G_{\sigma,\mu}\left(\frac{1+t}{2}\B v_{j+1} + \frac{1-t}{2}\B v_{j+2}\right) \nonumber\\
 &= \frac{2\mu-1}{\gamma^{\mathrm{mod}}\left(\frac{1}{2},\frac{1}{2\sigma^{4\mu-2}}\right)}
    \left(t^{2}\right)^{\mu-1}
    e^{-\frac{1}{2}\left(\frac{t^{2}}{\sigma^{2}}\right)^{2\mu-1}},
 \quad t\in[-1,1]. \label{gwn=2}
\end{align}
\begin{remark}
When $\mu=1$, the edge-restricted densities $\widetilde{g}_{\sigma,1}(t)$ and $\widetilde{k}_{\sigma,1}(t)$
coincide for $t\in[-1,1]$. In particular, both reduce to the standard truncated normal distribution on $[-1,1]$.
\end{remark}
  \begin{figure}
      \centering
\includegraphics[width=0.49\linewidth]{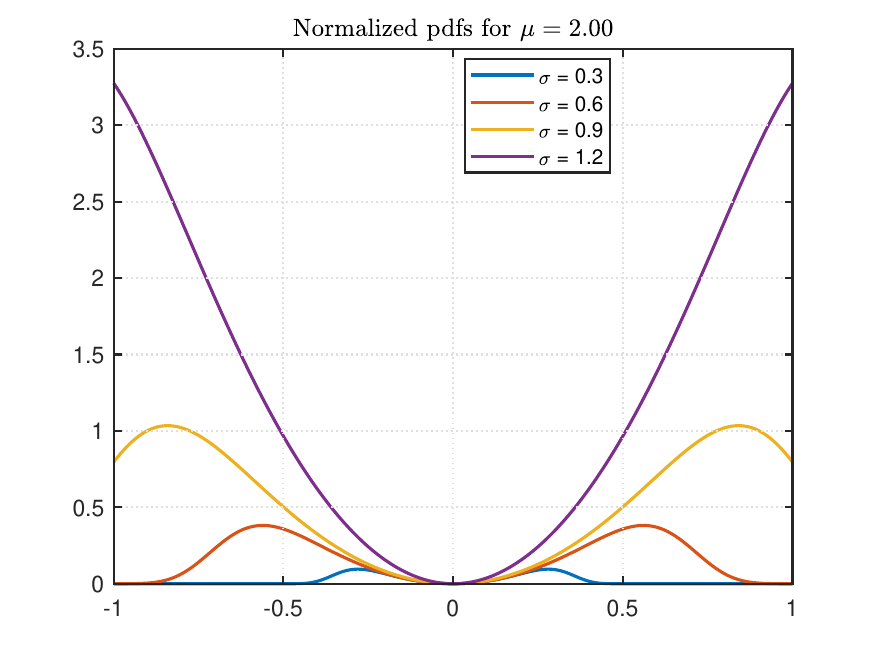}
\includegraphics[width=0.49\linewidth]{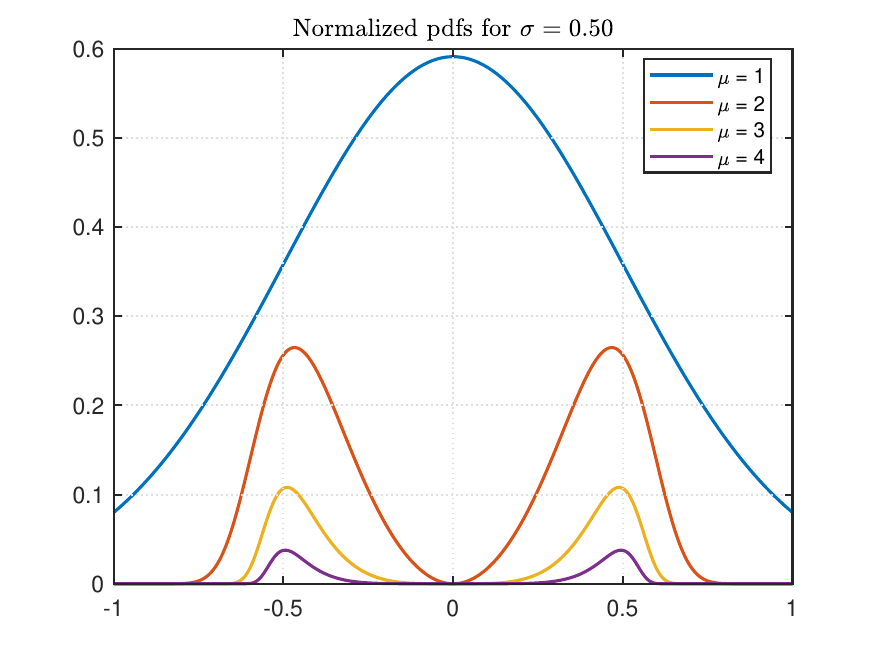}
      \caption{Normalized probability density functions $\widetilde{g}_{\sigma,\mu}$ on $[-1,1]$. 
Left: effect of the scale parameter, with fixed $\mu=2$ and different values of $\sigma$. 
Right: effect of the shape parameter, with fixed $\sigma=0.50$ and varying values of $\mu$, illustrating the increased sharpness and concentration around $t=0$. 
These plots emphasize the flexibility provided by the two parameters.}
      \label{fig:densities2}
  \end{figure}
Figure~\ref{fig:densities2} shows that $\widetilde{g}_{\sigma,\mu}$ depends strongly on both the shape parameter $\mu$ and the scale parameter $\sigma$, thereby motivating the two-parameter generalization.

To define our second two-parameter family of nonconforming histopolation operators,
we introduce the following enriched linear functionals
\begin{align}\label{ex31}
{\mathcal{D}}_{j}^{\mathrm{enr}}(f) &= \int_{-1}^{1} f\left(\frac{1+t}{2}\B{v}_{j+1} + \frac{1-t}{2}\B{v}_{j+2}\right) \widetilde{g}_{\sigma,\mu} (t) dt, \quad j=1,2,3,\\[2mm]
{\mathcal{S}}_{j}^{\mathrm{enr}}\left(f\right)&=
 \int_{-1}^{1} q_{\sigma,\mu}(t)
  f\left(\frac{1+t}{2}\B{v}_{j+1} + \frac{1-t}{2}\B{v}_{j+2}\right) \widetilde{g}_{\sigma,\mu} (t) dt, \quad j=1,2,3. \label{ex3}
\end{align}
 where 
\begin{equation}\label{LegPol2}
{q}_{\sigma,\mu}(t)=     t^2- \frac{\gamma^{\mathrm{mod}}\left(\frac{2\mu+1}{2(2\mu-1)},\frac{1}{2\sigma^{4\mu-2}}\right)}
          {\gamma^{\mathrm{mod}}\left(\frac{1}{2},\frac{1}{2\sigma^{4\mu-2}}\right)}.
\end{equation}
We then define the following triple
\begin{equation*}
\mathcal{H}^{\mathrm{enr}}_{2,\sigma,\mu}= \left(T, \mathbb{P}_{2}(T),\Sigma_{2,\sigma,\mu}^{\mathrm{enr}}(T)\right),
\end{equation*}
where  
 \begin{equation*}
\Sigma^{\mathrm{enr}}_{2,\sigma,\mu}(T)=  \left\{{\mathcal{D}}_{j}^{\mathrm{enr}},{{\mathcal{S}}}^{\mathrm{enr}}_{j}\, : \, j=1,2,3\right\}.
\end{equation*}

 \begin{remark}
The distribution $\widetilde{g}_{\sigma,\mu}$ admits a beta-type (power) density as a limiting case.
Indeed, using~\eqref{limgamma} with
 \begin{equation*}
s =\frac{1}{2}, \quad  z= \frac{1}{2\sigma^{4\mu-2}}    
 \end{equation*}
it follows that
\begin{equation*}
  \lim_{z \rightarrow 0^+ } \gamma^{\mathrm{mod}}\left(\frac{1}{2},z\right)= \lim_{z \rightarrow 0^+ }\frac{\gamma\left(\frac{1}{2},z\right) }{\sqrt{z}} = 2.
\end{equation*}
Therefore, by~\eqref{gwn=2}, for all $t\in[-1,1]$ we obtain the pointwise limit
\begin{equation}\label{w1inf2}
\widetilde{g}_{\infty,\mu}(t):=  \lim_{\sigma \rightarrow \infty } \widetilde{g}_{\sigma,\mu}(t)= \frac{2\mu-1}{2} \left(t^2\right)^{\mu -1}.
\end{equation}
Hence, $\widetilde{g}_{\infty,\mu}$ is the symmetric beta-type (power) density on $[-1,1]$,
highlighting the limiting behaviour of the family as $\sigma\to\infty$.
 \end{remark}

Our first goal is to prove that $\mathcal{H}_{2,\sigma,\mu}^{\mathrm{enr}}$ is unisolvent.
In a second step, we characterize the basis functions associated with this triple.
Proofs are deferred to later subsections. The main results and some immediate
consequences are summarized below.

\begin{theorem}\label{th1nnewfin112} The triple $\mathcal{H}^{\mathrm{enr}}_{2,\sigma,\mu}$ is unisolvent for any $\sigma>0$ and $\mu\geq 1$.
\end{theorem}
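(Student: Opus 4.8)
The plan is to mirror the argument used for Theorem~\ref{th1nnewfin11}, since the second family is structurally identical to the first: both are symmetric, unit-mass densities on $[-1,1]$, and in both cases the second functional is built by multiplying by a polynomial $q_{\sigma,\mu}(t) = t^2 - \tau$ that is orthogonal to $\mathbb{P}_1$ with respect to the density. First I would record the analogues of the auxiliary lemmas for $\widetilde{g}_{\sigma,\mu}$: namely (i) a moment formula expressing $s_{m,\sigma,\mu} := \int_{-1}^1 t^m \widetilde{g}_{\sigma,\mu}(t)\,dt$ in terms of modified incomplete gamma functions (obtained exactly as in Lemma~\ref{mompd2}, via the substitution $u = t^{2\mu-1}$ and Lemma~\ref{rho1}), which in particular shows that the constant subtracted in~\eqref{LegPol2} is precisely the second moment $s_{2,\sigma,\mu}$, so that $q_{\sigma,\mu}(t) = t^2 - s_{2,\sigma,\mu}$; and (ii) the orthogonality of $q_{\sigma,\mu}$ to all linear polynomials with respect to $\widetilde{g}_{\sigma,\mu}$, which follows from evenness of $q_{\sigma,\mu}\widetilde{g}_{\sigma,\mu}$ together with $\int_{-1}^1 q_{\sigma,\mu}\widetilde{g}_{\sigma,\mu}\,dt = s_{2,\sigma,\mu} - s_{2,\sigma,\mu} = 0$, in exact parallel with Lemma~\ref{poq}.

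With these in hand, the core of the proof proceeds as for Theorem~\ref{th1nnewfin11}. Take $p_2 \in \mathbb{P}_2(T)$ with $\mathcal{D}_j^{\mathrm{enr}}(p_2) = 0$ and $\mathcal{S}_j^{\mathrm{enr}}(p_2) = 0$ for $j=1,2,3$, and write it in barycentric form $p_2 = \sum_{i=1}^3 a_i\lambda_i + \sum_{i=1}^3 b_i\lambda_i^2$. Using the orthogonality of $q_{\sigma,\mu}$ to linear polynomials, the functional $\mathcal{S}_j^{\mathrm{enr}}$ annihilates the linear part, and applying the affine-restriction computation exactly as in Lemma~\ref{F22} (expanding $\lambda_i^2$ along the edge, using $\lambda_i(\B v_{j+1}),\lambda_i(\B v_{j+2}) \in \{0,1\}$ with one of them zero when $i\neq j$, and reducing the edge integral to $\tfrac14\int_{-1}^1 q_{\sigma,\mu}(t)\,t^2\,\widetilde{g}_{\sigma,\mu}(t)\,dt = \tfrac14\|q_{\sigma,\mu}\|_{2,\sigma,\mu}^2$ via $t^2 = q_{\sigma,\mu}(t) + s_{2,\sigma,\mu}$ and orthogonality) yields $\mathcal{S}_j^{\mathrm{enr}}(\lambda_i^2) = \tfrac14\|q_{\sigma,\mu}\|_{2,\sigma,\mu}^2(1-\delta_{ij})$. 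Hence condition~\eqref{conds12} becomes $\tfrac14\|q_{\sigma,\mu}\|_{2,\sigma,\mu}^2\, A\B b = \B 0$ with $A$ the matrix in~\eqref{MA}; since $\det A = 2 \neq 0$ and $\|q_{\sigma,\mu}\|_{2,\sigma,\mu}^2 > 0$, we get $\B b = \B 0$, so $p_2$ is linear. Then the analogue of Lemma~\ref{I12} gives $\mathcal{D}_j^{\mathrm{enr}}(\lambda_i) = \tfrac12(1-\delta_{ij})$, so $\mathcal{D}_j^{\mathrm{enr}}(p_2) = 0$ forces $\tfrac12 A\B a = \B 0$, whence $\B a = \B 0$ and $p_2 = 0$.

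The only genuinely new point — and the place where I would be most careful — is verifying that $\|q_{\sigma,\mu}\|_{2,\sigma,\mu}^2 := \int_{-1}^1 q_{\sigma,\mu}^2(t)\widetilde{g}_{\sigma,\mu}(t)\,dt$ is strictly positive, and more basically that $\widetilde{g}_{\sigma,\mu}$ really is a nonnegative function of unit mass even though the bivariate parent $G_{\sigma,\mu}$ need not be a weight on $T$. The unit-mass claim is Lemma~\ref{d1} (assumed available), and nonnegativity on the edges is immediate from~\eqref{gwn=2} since $(t^2)^{\mu-1} \geq 0$ and the exponential is positive; positivity of the norm then follows because $\widetilde{g}_{\sigma,\mu} > 0$ almost everywhere on $[-1,1]$ while $q_{\sigma,\mu}$, being a nonzero polynomial, vanishes only at finitely many points. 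I would also note explicitly that all the gamma-function arguments appearing in~\eqref{LegPol2} — namely $\tfrac{1}{2}$, $\tfrac{2\mu+1}{2(2\mu-1)}$, and $\tfrac{1}{2\sigma^{4\mu-2}}$ — are strictly positive for $\mu\geq 1$ and $\sigma>0$, so the density, its moments, and $q_{\sigma,\mu}$ are all well defined; once this bookkeeping is done, the linear-algebra step with the fixed invertible matrix $A$ closes the argument verbatim as before. \qed
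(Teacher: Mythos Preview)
Your proposal is correct and follows exactly the paper's approach: the paper proves Theorem~\ref{th1nnewfin112} by invoking the analogues of Lemmas~\ref{I12}, \ref{I22}, \ref{lem22}, and \ref{F22} for the second family (stated there as Lemmas~\ref{I1}, \ref{I2}, \ref{l1}, \ref{l2}) and then declaring that the argument of Theorem~\ref{th1nnewfin11} goes through verbatim. Your write-up is in fact more detailed than the paper's, since you explicitly justify the positivity of $\|q_{\sigma,\mu}\|_{2,\sigma,\mu}^2$ and the well-definedness of the gamma-function quantities, points the paper leaves implicit.
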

We next derive closed-form expressions for the basis functions associated with the triple $\mathcal{H}^{\mathrm{enr}}_{2,\sigma,\mu}$.  
Specifically, we determine a basis
\begin{equation*}
{B}^{\prime}_{2,\sigma,\mu}=\left\{\eta_{i}, \rho_{i} \, :\, i=1,2,3\right\}
\end{equation*}
of the vector space $\mathbb{P}_{2}(T)$ satisfying the conditions
\begin{align*}
{{\mathcal{D}}}_{j}^{\mathrm{enr}}\left(  \eta_{i}\right) &=  \delta_{ij},
  \\ 
{{\mathcal{S}}}^{\mathrm{enr}}_{j}\left(\eta_{i}\right) &=  0,  \\ 
{\mathcal{D}}_j^{\mathrm{enr}}\left(\rho_i\right) &= 0,  \\ {{\mathcal{S}}}^{\mathrm{enr}}_{j}\left(\rho_{i}\right) &=  \delta_{ij}, 
\end{align*}
for any $i,j=1,2,3$. 

\begin{theorem}\label{th2allalf2} The basis functions associated with the enriched triple $\mathcal{H}^{\mathrm{enr}}_{2,\sigma,\mu}$ are given by
\begin{align*}
    \eta_{i}&=  1-2\lambda_i,  \quad i=1,2,3,\\
    \rho_{i}&=  - C_{\sigma,\mu}\eta_{i}+\frac{2}{\left\|{q}_{\sigma,\mu}\right\|^2_{2,{{\sigma,\mu}} }}\left(-\lambda_i^2+\lambda^2_{i+1}+\lambda^2_{i+2}\right), \quad i=1,2,3, 
\end{align*} 
where 
\begin{equation*}
    \left\|{q}_{\sigma,\mu}\right\|^2_{2,{{\sigma,\mu}} }=\int_{-1}^{1}{q}^2_{\sigma,\mu}(t)
\widetilde{g}_{\sigma,\mu}(t)dt,
\end{equation*}
\begin{equation*}
C_{\sigma,\mu}=\frac{1+s_{2,\sigma,\mu}}{ \left\|q_{\sigma,\mu}\right\|^2_{2,{{\sigma,\mu}}}},
\end{equation*}
and
\begin{equation*}
    s_{2,\sigma,\mu}=\int_{-1}^{1}t^2
\widetilde{g}_{\sigma,\mu}(t)dt,
\end{equation*}
denotes the second moment of $\ \widetilde{g}_{\sigma,\mu}.$
      \end{theorem}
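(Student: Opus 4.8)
\textbf{Proof proposal for Theorem~\ref{th2allalf2}.}
The plan is to reproduce, for the second family, the exact chain of reasoning used for Theorem~\ref{th2allalf3r}. All the work reduces to establishing the analogues of Lemmas~\ref{pd2}--\ref{F22} with $\widetilde{k}_{\sigma,\mu}$, $p_{\sigma,\mu}$, $\mathcal{I}_j^{\mathrm{enr}}$, $\mathcal{L}_j^{\mathrm{enr}}$ replaced by $\widetilde{g}_{\sigma,\mu}$, $q_{\sigma,\mu}$, $\mathcal{D}_j^{\mathrm{enr}}$, $\mathcal{S}_j^{\mathrm{enr}}$; once these are in place the determination of $\eta_i$ and $\rho_i$ is formally identical to the proof just given. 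Note that although $G_{\sigma,\mu}$ need not be a genuine weight on all of $T$ (since $H$ changes sign), on every edge $H$ restricts to $t^2\ge 0$ by~\eqref{hh}, so $\widetilde{g}_{\sigma,\mu}$ is a bona fide probability density on $[-1,1]$ and every edge-based computation goes through unchanged.

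Concretely, I would first record the following facts. (i) \emph{$\widetilde{g}_{\sigma,\mu}$ is a pdf} (this is Lemma~\ref{d1}): by symmetry and the substitution $u=t^{2\mu-1}$ one reduces $\int_{-1}^1(t^2)^{\mu-1}e^{-\frac12(t^2/\sigma^2)^{2\mu-1}}\,dt$ to $\tfrac{2}{2\mu-1}\int_0^1 e^{-(u/(\sqrt2\sigma^{2\mu-1}))^2}\,du$, and Lemma~\ref{rho1} with $s=0$, $\rho=1/(\sqrt2\sigma^{2\mu-1})$ gives $\tfrac{1}{2\mu-1}\gamma^{\mathrm{mod}}\!\left(\tfrac12,\tfrac{1}{2\sigma^{4\mu-2}}\right)$, matching the reciprocal of the normalization constant in~\eqref{êxp3}. (ii) \emph{Moments}: the same substitution with $s=2k/(2\mu-1)$ yields $s_{2k+1,\sigma,\mu}=0$ and
\begin{equation*}
s_{2k,\sigma,\mu}=\frac{\gamma^{\mathrm{mod}}\!\left(\frac{2k+2\mu-1}{2(2\mu-1)},\frac{1}{2\sigma^{4\mu-2}}\right)}{\gamma^{\mathrm{mod}}\!\left(\frac12,\frac{1}{2\sigma^{4\mu-2}}\right)},
\end{equation*}
and in particular, taking $k=1$, the constant subtracted in~\eqref{LegPol2} is exactly $s_{2,\sigma,\mu}$, so $q_{\sigma,\mu}(t)=t^2-s_{2,\sigma,\mu}$. (iii) \emph{Orthogonality}: since $q_{\sigma,\mu}\widetilde{g}_{\sigma,\mu}$ is even, $\int_{-1}^1 q_{\sigma,\mu}(t)\widetilde{g}_{\sigma,\mu}(t)\,dt=\int_{-1}^1 t^2\widetilde{g}_{\sigma,\mu}-s_{2,\sigma,\mu}=0$, so $q_{\sigma,\mu}\perp\mathbb{P}_1([-1,1])$ with respect to $\widetilde{g}_{\sigma,\mu}$, exactly as in Lemma~\ref{poq}.

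Next I would compute the weighted edge averages. Using that barycentric coordinates are affine, that $\widetilde{g}_{\sigma,\mu}$ has unit mass and second moment $s_{2,\sigma,\mu}$, and property~\eqref{propstar}, one obtains verbatim analogues of Lemmas~\ref{I12}--\ref{F22}:
\begin{equation*}
\mathcal{D}_j^{\mathrm{enr}}(\lambda_i)=\tfrac12(1-\delta_{ij}),\qquad
\mathcal{D}_j^{\mathrm{enr}}(\lambda_i^2)=\tfrac{1+s_{2,\sigma,\mu}}{4}(1-\delta_{ij}),
\end{equation*}
while the orthogonality of $q_{\sigma,\mu}$ gives $\mathcal{S}_j^{\mathrm{enr}}(\lambda_i)=0$ and, using $t^2=q_{\sigma,\mu}(t)+s_{2,\sigma,\mu}$ together with orthogonality again, $\mathcal{S}_j^{\mathrm{enr}}(\lambda_i^2)=\tfrac{\|q_{\sigma,\mu}\|^2_{2,\sigma,\mu}}{4}(1-\delta_{ij})$. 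With these identities, write $\eta_1=\sum a_i\lambda_i+\sum b_i\lambda_i^2$ and $\rho_1=\sum c_i\lambda_i+\sum d_i\lambda_i^2$; imposing $\mathcal{S}_j^{\mathrm{enr}}(\eta_1)=0$ forces $A\B{b}=\B{0}$, hence $b_i=0$ by invertibility of the matrix $A$ in~\eqref{MA}, and then $\mathcal{D}_j^{\mathrm{enr}}(\eta_1)=\delta_{1j}$ gives $A\B{a}=(2,0,0)^T$, so $\B{a}=(-1,1,1)^T$ and $\eta_1=1-2\lambda_1$. For $\rho_1$: $\mathcal{S}_j^{\mathrm{enr}}(\rho_1)=\delta_{1j}$ gives $d_i$ as stated, and $\mathcal{D}_j^{\mathrm{enr}}(\rho_1)=0$ then fixes $c_i=\mp C_{\sigma,\mu}$ with $C_{\sigma,\mu}=(1+s_{2,\sigma,\mu})/\|q_{\sigma,\mu}\|^2_{2,\sigma,\mu}$, yielding the claimed formula; the cases $i=2,3$ follow by cyclic symmetry. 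The argument is essentially routine once the auxiliary lemmas are in place; the only point requiring a little care is the substitution $u=t^{2\mu-1}$ in steps (i)--(ii), where one must verify that the factor $t^{2\mu-2}$ appearing in the integrand is precisely the Jacobian-type factor produced by the change of variable, and that the exponent $\tfrac12(t^2/\sigma^2)^{2\mu-1}$ rewrites cleanly as $(u/(\sqrt2\sigma^{2\mu-1}))^2$ so that Lemma~\ref{rho1} applies.
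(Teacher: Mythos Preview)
Your proposal is correct and follows essentially the same route as the paper: the paper records the second-family analogues of the auxiliary lemmas (Lemmas~\ref{d1}, \ref{momw1}, \ref{polp}, \ref{I1}, \ref{I2}, \ref{l1}, \ref{l2}) and then states that the proof of Theorem~\ref{th2allalf2} ``follows along the same lines as that of Theorem~\ref{th2allalf3r} and is therefore omitted.'' Your write-up in fact spells out more detail than the paper does, including the substitution $u=t^{2\mu-1}$ and the resulting matrix systems, all of which are exactly the steps the paper has in mind.
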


We now have all the necessary ingredients to define the two-parameter local reconstruction operator associated with the pdfs  $\widetilde{g}_{\sigma,\mu}$.  
This operator is constructed using the basis functions of Theorem~\ref{th2allalf2} and is defined by
\begin{equation*}
\begin{array}{rcl}
{\pi}_{2,\sigma,\mu}^{{\mathrm{enr}}}: C(T) &\rightarrow& {\mathbb{P}}_{2}(T)
\\
f &\mapsto& \displaystyle{\sum_{j=1}^{3}  {\mathcal{D}}^{\mathrm{enr}}_{j}\left(f\right){ \eta}_{j}+ \sum_{j=1}^{3}{\mathcal{S}}^{\mathrm{enr}}_{j}\left(f\right)}{  \rho}_{j}.
\end{array}
\end{equation*}

As in the previous case, the proofs rely on specific properties of the density
$\widetilde{g}_{\sigma,\mu}$ and of the polynomial $q_{\sigma,\mu}$. To prove
Theorems~\ref{th1nnewfin112} and~\ref{th2allalf2}, we first collect a number of auxiliary lemmas
and recall some standard notions needed in the sequel. Exploiting the symmetry of the edge density
$\widetilde{g}_{\sigma,\mu}$ defined in~\eqref{gwn=2}, we begin by recording some of its basic
properties. The next lemma shows that $\widetilde{g}_{\sigma,\mu}$ is a probability density on
$[-1,1]$.

\begin{lemma}\label{d1}
For any $\mu \geq 1$ and $\sigma>0$, the function $\widetilde{g}_{\sigma,\mu}$ is a probability density function.
\end{lemma}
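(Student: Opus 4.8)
The plan is to mirror exactly the proof of Lemma~\ref{pd2}, since $\widetilde{g}_{\sigma,\mu}$ has the same structural form as $\widetilde{k}_{\sigma,\mu}$, only with different exponents. Concretely, I would show that the normalization constant appearing in front of $\bigl(t^2\bigr)^{\mu-1} e^{-\frac{1}{2}(t^2/\sigma^2)^{2\mu-1}}$ in~\eqref{gwn=2} is precisely the reciprocal of the integral $\int_{-1}^1 \bigl(t^2\bigr)^{\mu-1} e^{-\frac{1}{2}(t^2/\sigma^2)^{2\mu-1}}\,dt$, so that $\int_{-1}^1 \widetilde{g}_{\sigma,\mu}(t)\,dt = 1$. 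Since the density is manifestly nonnegative on $[-1,1]$, this suffices.

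\textbf{Key steps.} First I would use the symmetry of the integrand to write $\int_{-1}^1 \bigl(t^2\bigr)^{\mu-1} e^{-\frac{1}{2}(t^2/\sigma^2)^{2\mu-1}}\,dt = 2\int_0^1 t^{2\mu-2}\, e^{-\bigl(t^{2\mu-1}/(\sqrt{2}\,\sigma^{2\mu-1})\bigr)^2}\,dt$, grouping the exponent so that it has the form $-(\rho u)^2$ needed for Lemma~\ref{rho1}. Next I would perform the substitution $u = t^{2\mu-1}$, so that $du = (2\mu-1)t^{2\mu-2}\,dt$; the prefactor $t^{2\mu-2}$ is exactly absorbed, giving $\frac{2}{2\mu-1}\int_0^1 e^{-\bigl(u/(\sqrt{2}\,\sigma^{2\mu-1})\bigr)^2}\,du$. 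Then I would apply Lemma~\ref{rho1} with $s=0$ and $\rho = 1/(\sqrt{2}\,\sigma^{2\mu-1})$, which yields $\frac{1}{2}\gamma^{\mathrm{mod}}\!\bigl(\tfrac12, \tfrac{1}{2\sigma^{4\mu-2}}\bigr)$, whence the integral equals $\frac{1}{2\mu-1}\gamma^{\mathrm{mod}}\!\bigl(\tfrac12, \tfrac{1}{2\sigma^{4\mu-2}}\bigr)$. Finally, multiplying by the constant $\frac{2\mu-1}{\gamma^{\mathrm{mod}}(\frac12, \frac{1}{2\sigma^{4\mu-2}})}$ in~\eqref{gwn=2} gives $1$, completing the proof.

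\textbf{Main obstacle.} There is no serious obstacle here: the argument is a routine change of variables together with a direct invocation of Lemma~\ref{rho1}. The only point requiring a little care is checking that the substitution $u = t^{2\mu-1}$ is legitimate and that the exponent of $t$ in the prefactor ($2\mu-2$) matches the Jacobian correctly — which it does, since $\frac{d}{dt}t^{2\mu-1} = (2\mu-1)t^{2\mu-2}$. One should also note that the substitution is valid for $\mu \geq 1$ (so that $t^{2\mu-2}$ is integrable at the origin and the map $t \mapsto t^{2\mu-1}$ is a bijection of $[0,1]$ onto itself). I would state this remark and then simply display the chain of equalities.
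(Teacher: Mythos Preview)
Your proposal is correct and follows essentially the same approach as the paper's own proof: symmetry, the substitution $u=t^{2\mu-1}$, and an application of Lemma~\ref{rho1} with $s=0$ and $\rho=1/(\sqrt{2}\,\sigma^{2\mu-1})$ to identify the integral as $\tfrac{1}{2\mu-1}\gamma^{\mathrm{mod}}\bigl(\tfrac12,\tfrac{1}{2\sigma^{4\mu-2}}\bigr)$. Your added remarks on nonnegativity and on the validity of the substitution for $\mu\ge 1$ are fine and do not alter the argument.
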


\begin{proof}
By symmetry and with the substitution $u = t^{2\mu-1}$, we obtain
\begin{eqnarray}
  \int_{-1}^{1} \left(t^2\right)^{\mu-1}
   e^{-\frac{1}{2}\left(\frac{t^2}{\sigma^2}\right)^{2\mu-1}}dt
   &=& 2 \int_{0}^{1} t^{2\mu-2}
      e^{-\left(\frac{t^{2\mu-1}}{\sqrt{2}\sigma^{2\mu-1}}\right)^{2}} dt \notag \\ \label{gs1}
   &=& \frac{2}{2\mu-1} \int_{0}^{1}
      e^{-\left(\frac{u}{\sqrt{2}\sigma^{2\mu-1}}\right)^{2}} du.
\end{eqnarray}
Applying Lemma~\ref{rho1} with
\begin{equation*}
    s=0, \quad \rho=\frac{1}{\sqrt{2}\sigma^{2\mu-1}},
\end{equation*}
we can write
\begin{equation}\label{gs2}
    \frac{2}{2\mu-1} \int_{0}^{1}
      e^{-\left(\frac{u}{\rho}\right)^{2}} du
    = \frac{1}{2\mu-1}
      \gamma^{\mathrm{mod}}\left(\frac{1}{2},\frac{1}{2\sigma^{4\mu-2}}\right).
\end{equation}
Then, combining~\eqref{gs1} and~\eqref{gs2}, it follows that
\begin{equation}\label{gs3}
     \int_{-1}^{1} \left(t^2\right)^{\mu-1}
     e^{-\frac{1}{2}\left(\frac{t^2}{\sigma^2}\right)^{2\mu-1}} dt
     = \frac{1}{2\mu-1}
       \gamma^{\mathrm{mod}}\left(\frac{1}{2},\frac{1}{2\sigma^{4\mu-2}}\right).
\end{equation}
Therefore, by the definition of $\widetilde{g}_{\sigma,\mu}$ in~\eqref{gwn=2}, whose normalization
constant equals 
\begin{equation*}
 \frac{2\mu-1}{\gamma^{\mathrm{mod}}\left(\frac{1}{2},\frac{1}{2\sigma^{4\mu-2}}\right)},   
\end{equation*}
 we get
\begin{equation*}
       \int_{-1}^{1} \widetilde{g}_{\sigma,\mu}(t)dt = 1.
\end{equation*}
This proves the claim.
\end{proof}

The next lemma allows us to derive a compact formula for the moments 
of $\widetilde{g}_{\sigma,\mu}$ in terms of the modified incomplete gamma functions defined in~\eqref{migamma}.

\begin{lemma}\label{momw1}
Let $\{s_{m,\sigma,\mu}\}_{m=0}^\infty$ be the sequence of normalized moments of $\widetilde{g}_{\sigma,\mu}$. Then
\begin{equation}\label{dsssaasasa}
s_{m,\sigma,\mu}=\int_{-1}^{1} t^{m}\widetilde{g}_{\sigma,\mu}(t)dt
=\begin{cases}
0, & \text{if } m=2k+1,\\[1mm]
\dfrac{\gamma^{\mathrm{mod}}\left(\dfrac{2k+2\mu-1}{2(2\mu-1)},\dfrac{1}{2\sigma^{4\mu-2}}\right)}
{\gamma^{\mathrm{mod}}\left(\dfrac{1}{2},\dfrac{1}{2\sigma^{4\mu-2}}\right)}, & \text{if } m=2k.
\end{cases}
\end{equation}
\end{lemma}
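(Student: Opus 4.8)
The plan is to mimic exactly the proof of Lemma~\ref{mompd2}, since $\widetilde{g}_{\sigma,\mu}$ has the same structural form as $\widetilde{k}_{\sigma,\mu}$, only with $\mu-1$ in place of $2\mu-2$ in the power, with exponent $2\mu-1$ in place of $\mu$ in the exponential, and with the corresponding normalization constant taken from~\eqref{gwn=2}. First I would observe that $\widetilde{g}_{\sigma,\mu}$ is even, so the odd moments $s_{2k+1,\sigma,\mu}$ vanish by symmetry; this handles the first branch of~\eqref{dsssaasasa}. For the even moments, I would write
\begin{equation*}
s_{2k,\sigma,\mu}=\frac{2\mu-1}{\gamma^{\mathrm{mod}}\!\left(\frac12,\frac{1}{2\sigma^{4\mu-2}}\right)}\int_{-1}^{1} t^{2k}\left(t^2\right)^{\mu-1}e^{-\frac12\left(\frac{t^2}{\sigma^2}\right)^{2\mu-1}}dt,
\end{equation*}
and reduce the remaining integral to a modified incomplete gamma function.

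The key computation is the integral identity, the analogue of~\eqref{n20252221}. Using symmetry and then the substitution $u=t^{2\mu-1}$ (so $du=(2\mu-1)t^{2\mu-2}dt$), the integral becomes
\begin{equation*}
2\int_0^1 t^{2k+2\mu-2}e^{-\left(\frac{t^{2\mu-1}}{\sqrt2\,\sigma^{2\mu-1}}\right)^2}dt=\frac{2}{2\mu-1}\int_0^1 u^{\frac{2k}{2\mu-1}}e^{-\left(\frac{u}{\sqrt2\,\sigma^{2\mu-1}}\right)^2}du,
\end{equation*}
where the exponent on $u$ comes from $\frac{2k+2\mu-2-(2\mu-2)}{2\mu-1}=\frac{2k}{2\mu-1}$. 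Then I would apply Lemma~\ref{rho1} with $s=\frac{2k}{2\mu-1}$ and $\rho=\frac{1}{\sqrt2\,\sigma^{2\mu-1}}$, so that $\frac{s+1}{2}=\frac{2k+2\mu-1}{2(2\mu-1)}$ and $\rho^2=\frac{1}{2\sigma^{4\mu-2}}$, yielding
\begin{equation*}
\int_{-1}^{1} t^{2k}\left(t^2\right)^{\mu-1}e^{-\frac12\left(\frac{t^2}{\sigma^2}\right)^{2\mu-1}}dt=\frac{1}{2\mu-1}\gamma^{\mathrm{mod}}\!\left(\frac{2k+2\mu-1}{2(2\mu-1)},\frac{1}{2\sigma^{4\mu-2}}\right).
\end{equation*}

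Finally I would substitute this identity back into the expression for $s_{2k,\sigma,\mu}$; the factors $2\mu-1$ cancel, leaving precisely
\begin{equation*}
s_{2k,\sigma,\mu}=\frac{\gamma^{\mathrm{mod}}\!\left(\frac{2k+2\mu-1}{2(2\mu-1)},\frac{1}{2\sigma^{4\mu-2}}\right)}{\gamma^{\mathrm{mod}}\!\left(\frac12,\frac{1}{2\sigma^{4\mu-2}}\right)},
\end{equation*}
which is the second branch of~\eqref{dsssaasasa}. (Note that setting $k=0$ recovers $s_{0,\sigma,\mu}=1$, consistent with Lemma~\ref{d1}.) I do not anticipate a genuine obstacle here: the only point requiring care is the bookkeeping of the exponents under the substitution $u=t^{2\mu-1}$ — getting $\frac{2k}{2\mu-1}$ correctly and then matching $\frac{s+1}{2}$ to $\frac{2k+2\mu-1}{2(2\mu-1)}$ — and checking that $s=\frac{2k}{2\mu-1}>-1$ so that Lemma~\ref{rho1} applies, which holds since $k\ge0$ and $\mu\ge1$.
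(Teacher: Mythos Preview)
Your proposal is correct and follows essentially the same approach as the paper: use evenness to kill the odd moments, then for the even moments apply the substitution $u=t^{2\mu-1}$ and invoke Lemma~\ref{rho1} with $s=\tfrac{2k}{2\mu-1}$ and $\rho=\tfrac{1}{\sqrt{2}\,\sigma^{2\mu-1}}$. The only cosmetic difference is that you isolate the unnormalized integral first and then multiply by the normalization constant, whereas the paper carries the constant through from the start; the computations and key identities are otherwise identical.
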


\begin{proof}
 By symmetry, $\widetilde{g}_{\sigma,\mu}$ is even, hence for all $k\geq 0$, it follows that
\begin{equation*}
   s_{2k+1,\sigma,\mu} = 0.
\end{equation*}
For the even moments, performing the change of variables  $u=t^{2\mu-1}$ gives
 \begin{eqnarray}
s_{2k,\sigma,\mu}&=& \frac{2\mu-1}{\gamma^{\mathrm{mod}}\left(\frac{1}{2},\frac{1}{2\sigma^{4\mu-2}}\right)}\int_{-1}^{1} t^{2k}\left(t^2\right)^{\mu-1} 
   e^{-\frac{1}{2} \left(\frac{t^2}{\sigma^2}\right)^{2\mu-1}} dt \notag\\ &=& \frac{2(2\mu-1)}{\gamma^{\mathrm{mod}}\left(\frac{1}{2},\frac{1}{2\sigma^{4\mu-2}}\right)}
 \int_{0}^{1} t^{2k+2\mu-2}
e^{-\left(\frac{t^{2\mu-1}}{\sqrt{2}\sigma^{2\mu-1}}\right)^{2}}dt \notag\\ \notag &=&
    \frac{2}{\gamma^{\mathrm{mod}}\left(\frac{1}{2},\frac{1}{2\sigma^{4\mu-2}}\right)}
    \int_{0}^{1} u^{\frac{2k}{2\mu-1}}   e^{-\left(\frac{u}{\sqrt{2}\sigma^{2\mu-1}}\right)^2}  du. 
  \end{eqnarray}
Applying Lemma~\ref{rho1} with 
\begin{equation*}
    s=\frac{2k}{2\mu-1}, \quad \rho=\frac{1}{\sqrt{2}\sigma^{2\mu-1}}
\end{equation*}
we can write
\begin{equation*}
    \frac{2}{\gamma^{\mathrm{mod}}\left(\frac{1}{2},\frac{1}{2\sigma^{4\mu-2}}\right)}
    \int_{0}^{1} u^{\frac{2k}{2\mu-1}}   e^{-\left(\frac{u}{\sqrt{2}\sigma^{2\mu-1}}\right)^2}  du= \frac{\gamma^{\mathrm{mod}}\left(\frac{2k+2\mu-1}{2(2\mu-1)},\frac{1}{2\sigma^{4\mu-2}}\right)}{\gamma^{\mathrm{mod}}\left(\frac{1}{2},\frac{1}{2\sigma^{4\mu-2}}\right)}.
\end{equation*}
Therefore
\begin{equation*}
    s_{2k,\sigma,\mu}
=\frac{\gamma^{\mathrm{mod}}\left(\frac{2k+2\mu-1}{2(2\mu-1)},\frac{1}{2\sigma^{4\mu-2}}\right)}{\gamma^{\mathrm{mod}}\left(\frac{1}{2},\frac{1}{2\sigma^{4\mu-2}}\right)}.
\end{equation*}
\end{proof}

As an immediate application of the integral representation of the moments in Lemma~\ref{momw1},
we obtain an alternative expression for the polynomial in~\eqref{LegPol2}.

\begin{remark}
The polynomial ${q}_{\sigma,\mu}$ defined in~\eqref{LegPol2} 
can be expressed in terms of the second moment as
\begin{equation}\label{was22}
   q_{\sigma,\mu}(t) = t^2 - s_{2,\sigma,\mu}.
\end{equation}
Indeed, by taking $m=2$ in identity~\eqref{dsssaasasa}, one has
\begin{equation*}
       s_{2,\sigma,\mu} 
   = \frac{\gamma^{\mathrm{mod}}\left(\frac{2\mu+1}{2(2\mu-1)},\frac{1}{2\sigma^{4\mu-2}}\right)}
          {\gamma^{\mathrm{mod}}\left(\frac{1}{2},\frac{1}{2\sigma^{4\mu-2}}\right)},
\end{equation*}
which directly yields~\eqref{was22}.
\end{remark}

 \begin{remark}
Setting $m=0$ in~\eqref{dsssaasasa} gives $s_{0,\sigma,\mu}=1$, confirming that $\widetilde{g}_{\sigma,\mu}$ is a probability density.
\end{remark}

\begin{remark}
We observe that
\begin{equation*}
   \lim_{\sigma\to \infty}\int_{-1}^1 t^m \widetilde{g}_{\sigma,\mu}(t) dt= \int_{-1}^1 t^m \widetilde{g}_{\infty,\mu}(t) dt.
\end{equation*}
Indeed, from the definition~\eqref{w1inf2} we have
\begin{equation*}
 \int_{-1}^{1} t^m \widetilde{g}_{\infty,\mu}(t) dt= \frac{2\mu-1}{2}\int_{-1}^1  t^{m} \left(t^2\right)^{\mu-1}dt. 
\end{equation*}
If $m$ is odd, this integral vanishes by symmetry. If instead $m=2k$, a straightforward calculation yields
\begin{equation*}
     \int_{-1}^{1} t^{2k} \widetilde{g}_{\infty,\mu}(t) dt= \frac{2\mu-1}{2}\int_{-1}^1 t^{2k}\left(t^2\right)^{\mu-1}dt=\frac{2\mu-1}{2k+2\mu-1}. 
\end{equation*}
On the other hand, using~\eqref{dsssaasasa} together with~\eqref{modlimgamma}, we find
\begin{equation*}
s_{2k,\infty,\mu}:=\lim_{\sigma\to\infty} s_{2k,\sigma,\mu} = \frac{2\mu-1}{2k+2\mu-1},
\end{equation*}
which are exactly the moments of order $2k$ associated with the pdfs $\widetilde{g}_{\infty,\mu}(t)$.
\end{remark}

The next result establishes that $q_{\sigma,\mu}$, defined in~\eqref{LegPol2},
is the degree-two orthogonal polynomial associated with the weight $\widetilde{g}_{\sigma,\mu}$ on $[-1,1]$.

\begin{lemma}\label{polp} 
The polynomial $q_{\sigma,\mu}(t)$ is orthogonal to all linear polynomials on $[-1, 1]$ w.r.t. the weight function $\widetilde{g}_{\sigma,\mu}$. 
\end{lemma}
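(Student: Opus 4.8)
The plan is to mirror, almost verbatim, the proof of Lemma~\ref{poq} for the first family. Since $\mathbb{P}_1([-1,1])$ is spanned by the two monomials $1$ and $t$, it suffices to verify that $q_{\sigma,\mu}$ is orthogonal to each of them with respect to $\widetilde{g}_{\sigma,\mu}$. The orthogonality to the linear monomial is handled by symmetry: by the alternative representation $q_{\sigma,\mu}(t)=t^2-s_{2,\sigma,\mu}$ from~\eqref{was22} the polynomial $q_{\sigma,\mu}$ is even, and by~\eqref{gwn=2} the density $\widetilde{g}_{\sigma,\mu}$ is even, so $t\,q_{\sigma,\mu}(t)\widetilde{g}_{\sigma,\mu}(t)$ is odd and its integral over $[-1,1]$ vanishes.

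It then remains to show that $\int_{-1}^{1} q_{\sigma,\mu}(t)\widetilde{g}_{\sigma,\mu}(t)\,dt=0$. Using the form~\eqref{was22} of $q_{\sigma,\mu}$, the linearity of the integral, the fact (Lemma~\ref{d1}) that $\widetilde{g}_{\sigma,\mu}$ is a probability density, and the definition of the second moment $s_{2,\sigma,\mu}$ from Lemma~\ref{momw1}, one obtains
\[
\int_{-1}^{1}\left(t^2-s_{2,\sigma,\mu}\right)\widetilde{g}_{\sigma,\mu}(t)\,dt
 = \int_{-1}^{1} t^2\widetilde{g}_{\sigma,\mu}(t)\,dt
 - s_{2,\sigma,\mu}\int_{-1}^{1}\widetilde{g}_{\sigma,\mu}(t)\,dt
 = s_{2,\sigma,\mu}-s_{2,\sigma,\mu}=0 .
\]
This establishes orthogonality to constants, and combined with the symmetry argument it gives orthogonality to all of $\mathbb{P}_1([-1,1])$.

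I do not expect any genuine obstacle here: the only slightly nontrivial ingredients, namely the representation $q_{\sigma,\mu}(t)=t^2-s_{2,\sigma,\mu}$ and the explicit moment formula, are already available from the remark preceding the lemma and from Lemma~\ref{momw1}. The single point deserving a moment's attention is the appeal to the evenness of $q_{\sigma,\mu}\widetilde{g}_{\sigma,\mu}$ to discard the linear part, exactly as done in the proof of Lemma~\ref{poq}.
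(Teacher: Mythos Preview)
Your proof is correct and follows essentially the same approach as the paper: both use the evenness of $q_{\sigma,\mu}\widetilde{g}_{\sigma,\mu}$ to dispose of orthogonality to $t$, and then verify orthogonality to constants via the identity $q_{\sigma,\mu}(t)=t^2-s_{2,\sigma,\mu}$ together with the fact that $\widetilde{g}_{\sigma,\mu}$ is a probability density. The argument is indeed a verbatim transcription of Lemma~\ref{poq}, exactly as the paper does.
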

\begin{proof}
Since $q_{\sigma,\mu}(t)\widetilde{g}_{\sigma,\mu}(t)$ is an even function, it suffices to show that
\begin{equation}\label{ortp0}
\int_{-1}^1 q_{\sigma,\mu}(t)\widetilde{g}_{\sigma,\mu}(t)dt = 0.
\end{equation}
Using the expression of $q_{\sigma,\mu}$ from~\eqref{was22}, we have
\begin{eqnarray}
  \int_{-1}^1 q_{\sigma,\mu}(t) \widetilde{g}_{\sigma,\mu}(t) dt &=& \int_{-1}^1 \left( t^2 - s_{2,\sigma,\mu}\right)  \widetilde{g}_{\sigma,\mu}(t)  dt \\
    &=&   \int_{-1}^1 t^2 \widetilde{g}_{\sigma,\mu}(t) dt - s_{2,\sigma,\mu} \int_{-1}^1 \widetilde{g}_{\sigma,\mu}(t)dt\\
    &=&0.\label{finals} 
\end{eqnarray}
In the last equality we used the facts that
\begin{equation*}
    \int_{-1}^1 \widetilde{g}_{\sigma,\mu}(t)dt=1
\end{equation*}
since $\widetilde{g}_{\sigma,\mu}(t)$ is a probability density function, and
\begin{equation*}
    \int_{-1}^1 t^2 \widetilde{g}_{\sigma,\mu}(t) dt=s_{2,\sigma,\mu}
\end{equation*}
which is precisely the second moment of $\widetilde{g}_{\sigma,\mu}(t)$.
\end{proof}

In what follows, we prove several crucial lemmas needed in the sequel.

\begin{lemma}\label{I1}
  For any $i,j=1,2,3$, it holds  
\begin{equation}\label{ex32s}
{\mathcal{D}}_{j}^{\mathrm{enr}}\left(\lambda_i\right)=\frac{1}{2} \left(1-\delta_{ij}\right).
\end{equation} 
\end{lemma}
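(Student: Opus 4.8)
\textbf{Proof plan for Lemma~\ref{I1}.}
The plan is to mimic exactly the argument used for Lemma~\ref{I12}, since the functional $\mathcal{D}_{j}^{\mathrm{enr}}$ plays the same role for the density $\widetilde{g}_{\sigma,\mu}$ that $\mathcal{I}_{j}^{\mathrm{enr}}$ plays for $\widetilde{k}_{\sigma,\mu}$, and the only property of the density used there is that it integrates to $1$ on $[-1,1]$, which is guaranteed here by Lemma~\ref{d1}. First I would write out $\mathcal{D}_{j}^{\mathrm{enr}}(\lambda_i)$ from the definition~\eqref{ex31} as the integral of $\lambda_i\bigl(\tfrac{1+t}{2}\B v_{j+1}+\tfrac{1-t}{2}\B v_{j+2}\bigr)$ against $\widetilde{g}_{\sigma,\mu}(t)$ over $[-1,1]$.

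Next I would split into the two cases. If $i=j$, then the point $\tfrac{1+t}{2}\B v_{j+1}+\tfrac{1-t}{2}\B v_{j+2}$ lies on the edge $s_j$ for every $t\in[-1,1]$, so by~\eqref{propstar} the integrand vanishes identically and $\mathcal{D}_{j}^{\mathrm{enr}}(\lambda_j)=0$, which matches $\tfrac12(1-\delta_{jj})=0$. If $i\neq j$, I would use that $\lambda_i$ is affine, so by~\eqref{prop2}
\[
\lambda_i\!\left(\tfrac{1+t}{2}\B v_{j+1}+\tfrac{1-t}{2}\B v_{j+2}\right)
=\tfrac{1+t}{2}\lambda_i(\B v_{j+1})+\tfrac{1-t}{2}\lambda_i(\B v_{j+2}),
\]
and among the two vertices $\B v_{j+1},\B v_{j+2}$ exactly one is $\B v_i$ (by the Kronecker delta property), so one of $\lambda_i(\B v_{j+1}),\lambda_i(\B v_{j+2})$ equals $1$ and the other $0$. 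The integrand thus becomes either $\tfrac{1+t}{2}$ or $\tfrac{1-t}{2}$ times $\widetilde{g}_{\sigma,\mu}(t)$, and in either case, since $\widetilde{g}_{\sigma,\mu}$ is even (equivalently, by symmetry the odd part integrates to zero) and integrates to $1$ by Lemma~\ref{d1}, we get $\mathcal{D}_{j}^{\mathrm{enr}}(\lambda_i)=\tfrac12\int_{-1}^1\widetilde{g}_{\sigma,\mu}(t)\,dt=\tfrac12$, which equals $\tfrac12(1-\delta_{ij})$.

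There is essentially no obstacle here: the proof is a verbatim transcription of Lemma~\ref{I12} with $\widetilde{k}_{\sigma,\mu}$ replaced by $\widetilde{g}_{\sigma,\mu}$ and $\mathcal{I}_{j}^{\mathrm{enr}}$ replaced by $\mathcal{D}_{j}^{\mathrm{enr}}$. The only point worth a moment's care is making sure the normalization $\int_{-1}^1\widetilde{g}_{\sigma,\mu}=1$ is invoked correctly, but this is exactly the content of Lemma~\ref{d1}; the symmetry of $\widetilde{g}_{\sigma,\mu}$ used to discard the $t$-linear term is immediate from its explicit form~\eqref{gwn=2}, which depends on $t$ only through $t^2$.
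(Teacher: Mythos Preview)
Your proposal is correct and follows exactly the approach of the paper, which explicitly states that the proof of Lemma~\ref{I1} ``is similar to that of Lemma~\ref{I12} and is therefore omitted.'' Your write-up in fact spells out slightly more detail than the paper's own Lemma~\ref{I12}, making explicit the use of evenness of $\widetilde{g}_{\sigma,\mu}$ to discard the $t$-linear term and invoking Lemma~\ref{d1} for the normalization.
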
 
\begin{proof}
    The proof is similar to that of Lemma~\ref{I12} and is therefore omitted.
\end{proof}

\begin{lemma}\label{I2}
  For any $i,j=1,2,3$, it holds 
\begin{equation}\label{ex32as}
{\mathcal{D}}_{j}^{\mathrm{enr}}\left(\lambda_i^2\right)=\frac{1+s_{2,\sigma,\mu} }{4}
 \left(1-\delta_{ij}\right),
\end{equation} 
where $s_{2,\sigma,\mu}$ is the second moment of  $\widetilde{g}_{\sigma,\mu}$.
\end{lemma}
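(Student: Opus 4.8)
The plan is to mirror, almost verbatim, the proof of Lemma~\ref{I22}, since the statement~\eqref{ex32as} for $\mathcal{D}_{j}^{\mathrm{enr}}$ is the exact analogue of~\eqref{ex32as2} for $\mathcal{I}_{j}^{\mathrm{enr}}$, with the density $\widetilde{k}_{\sigma,\mu}$ replaced by $\widetilde{g}_{\sigma,\mu}$ and the second moment $t_{2,\sigma,\mu}$ replaced by $s_{2,\sigma,\mu}$. First I would expand $\mathcal{D}_{j}^{\mathrm{enr}}\!\left(\lambda_i^2\right)$ using the definition~\eqref{ex31} of the functional, obtaining
\begin{equation*}
{\mathcal{D}}_{j}^{\mathrm{enr}}\left(\lambda_i^2\right)=\int_{-1}^1  \lambda_{i}^2\left(\tfrac{1+t}{2}\boldsymbol{v}_{j+1}+ \tfrac{1-t}{2} \boldsymbol{v}_{j+2}\right) \widetilde{g}_{\sigma,\mu}(t)\, dt.
\end{equation*}

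Next I would split into the two cases. If $i=j$, then by the property~\eqref{propstar} the function $\lambda_j$ vanishes identically on the edge $s_j$, so $\lambda_j^2$ vanishes along the whole parametrized segment and the integral is zero, giving the factor $1-\delta_{ij}=0$. If $i\neq j$, I would use that barycentric coordinates are affine (property~\eqref{prop2}) together with the Kronecker delta property at the vertices: one of $\lambda_i(\boldsymbol{v}_{j+1})$, $\lambda_i(\boldsymbol{v}_{j+2})$ equals $1$ and the other equals $0$. Expanding the square
\begin{equation*}
\left(\tfrac{1+t}{2}\lambda_i\left(\boldsymbol{v}_{j+1}\right)+ \tfrac{1-t}{2}\lambda_i\left(\boldsymbol{v}_{j+2}\right)\right)^2,
\end{equation*}
exactly one of the two squared terms survives and the cross term drops, so the integrand reduces to $\tfrac{1}{4}(1\pm t)^2\widetilde{g}_{\sigma,\mu}(t)$. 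Expanding $(1\pm t)^2 = 1 \pm 2t + t^2$ and integrating, the odd term $\pm 2t$ contributes nothing because $\widetilde{g}_{\sigma,\mu}$ is even (as recorded in the proof of Lemma~\ref{momw1}), leaving
\begin{equation*}
{\mathcal{D}}_{j}^{\mathrm{enr}}\left(\lambda_i^2\right)=\frac{1}{4}\int_{-1}^1 \widetilde{g}_{\sigma,\mu}(t)\, dt+\frac{1}{4}\int_{-1}^1 t^2\,\widetilde{g}_{\sigma,\mu}(t)\, dt.
\end{equation*}
By Lemma~\ref{d1} the first integral equals $1$, and by definition (Lemma~\ref{momw1} with $m=2$) the second equals $s_{2,\sigma,\mu}$, which yields $\tfrac{1+s_{2,\sigma,\mu}}{4}$ in the case $i\neq j$ and hence~\eqref{ex32as} in all cases.

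There is essentially no obstacle here: every ingredient — affineness of $\lambda_i$, the vanishing~\eqref{propstar}, the evenness of $\widetilde{g}_{\sigma,\mu}$, the normalization from Lemma~\ref{d1}, and the value of the second moment from Lemma~\ref{momw1} — has already been established, so the argument is a routine transcription of the proof of Lemma~\ref{I22}. The only thing to be careful about is bookkeeping in the $i\neq j$ case (making sure the correct term survives depending on whether $\lambda_i(\boldsymbol{v}_{j+1})$ or $\lambda_i(\boldsymbol{v}_{j+2})$ equals $1$), but since the two sub-cases produce the same value $\tfrac14\int(1\pm t)^2\widetilde g_{\sigma,\mu}$ after discarding the odd part, this symmetry makes the distinction immaterial. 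Accordingly, I would keep the proof short, citing Lemma~\ref{I22} for the structure and Lemmas~\ref{d1} and~\ref{momw1} for the two integral evaluations.
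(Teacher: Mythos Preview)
Your proposal is correct and follows exactly the approach the paper intends: the paper omits this proof, stating that it is similar to that of Lemma~\ref{I22}, and your argument is precisely the transcription of that proof with $\widetilde{g}_{\sigma,\mu}$ and $s_{2,\sigma,\mu}$ in place of $\widetilde{k}_{\sigma,\mu}$ and $t_{2,\sigma,\mu}$.
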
 
\begin{proof}
     The proof is similar to that of Lemma~\ref{I22} and is therefore omitted.
\end{proof}

\begin{lemma}\label{l1}
 For any $i,j=1,2,3$, it holds 
\begin{equation}\label{ex3211}
{\mathcal{S}}_{{j}}^{\mathrm{enr}}\left(\lambda_i\right)=0.
\end{equation}
\end{lemma}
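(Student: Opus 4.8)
\textbf{Proof proposal for Lemma~\ref{l1}.}
The plan is to reduce the claim to the orthogonality of $q_{\sigma,\mu}$ already established in Lemma~\ref{polp}, exactly as Lemma~\ref{lem22} was deduced from Lemma~\ref{poq}. First I would recall that each barycentric coordinate $\lambda_i$ is an affine function on $T$ by~\eqref{prop2}, so its restriction to the edge $s_j$ under the parametrization~\eqref{parm} is an affine function of the variable $t$; explicitly,
\begin{equation*}
  \lambda_i\!\left(\tfrac{1+t}{2}\B{v}_{j+1} + \tfrac{1-t}{2}\B{v}_{j+2}\right)
  = \tfrac{1+t}{2}\lambda_i(\B{v}_{j+1}) + \tfrac{1-t}{2}\lambda_i(\B{v}_{j+2}) \in \mathbb{P}_1([-1,1]).
\end{equation*}
Substituting this into the definition~\eqref{ex3} of $\mathcal{S}_j^{\mathrm{enr}}$ gives
\begin{equation*}
  \mathcal{S}_j^{\mathrm{enr}}(\lambda_i)
  = \int_{-1}^{1} q_{\sigma,\mu}(t)\,\ell_{i,j}(t)\,\widetilde{g}_{\sigma,\mu}(t)\,dt,
\end{equation*}
with $\ell_{i,j} \in \mathbb{P}_1([-1,1])$. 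Since Lemma~\ref{polp} asserts that $q_{\sigma,\mu}$ is orthogonal to every linear polynomial on $[-1,1]$ with respect to the density $\widetilde{g}_{\sigma,\mu}$, this integral vanishes, which is precisely~\eqref{ex3211}.

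There is essentially no obstacle here: the statement is an immediate corollary of Lemma~\ref{polp} together with the affineness of barycentric coordinates, and the argument is independent of the particular edge $s_j$ and of whether $i=j$ or $i\neq j$. The only point requiring a line of justification is that the edge-restriction of $\lambda_i$ lies in $\mathbb{P}_1([-1,1])$, which is immediate from~\eqref{prop2}. One could alternatively phrase the proof even more tersely, as was done for Lemma~\ref{lem22}, by simply invoking the definition of $\mathcal{S}_j^{\mathrm{enr}}$ in~\eqref{ex3} and the orthogonality property of $q_{\sigma,\mu}$ from Lemma~\ref{polp}; I would keep the short explicit expansion above for clarity, but either version suffices.
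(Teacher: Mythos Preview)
Your proposal is correct and follows exactly the same approach as the paper, which simply states that the result follows directly from the definition~\eqref{ex3} of $\mathcal{S}_j^{\mathrm{enr}}$ and the orthogonality property of $q_{\sigma,\mu}$ in Lemma~\ref{polp}. Your added line making explicit that the edge-restriction of $\lambda_i$ lies in $\mathbb{P}_1([-1,1])$ is a harmless clarification of the same argument.
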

\begin{proof}
    The result follows directly from~\eqref{ex3} and Lemma~\ref{polp}.
\end{proof}

\begin{remark}
From the definition of $\mathcal{S}_{j}^{\mathrm{enr}}$ and the 
orthogonality of the polynomial $q_{\sigma,\mu}$ established in 
Lemma~\ref{polp}, it follows that
\begin{equation}\label{ex321102ts}
\mathcal{S}_{j}^{\mathrm{enr}}\left(p_1\right)=0, 
\quad j=1,2,3,
\end{equation}
for every linear polynomial $p_1 \in \mathbb{P}_{1}(T)$.
\end{remark}

\begin{lemma}\label{l2}
  For any $i,j=1,2,3$, it holds
\begin{equation}\label{ex321}
{\mathcal{S}}_{{j}}^{\mathrm{enr}}\left(\lambda_i^2\right)= 
\frac{\left\|q_{\sigma,\mu}\right\|^2_{2,{{\sigma,\mu}}}}{4}\left(1-\delta_{ij}\right).
\end{equation} 
\end{lemma}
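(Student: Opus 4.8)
The plan is to transcribe the proof of Lemma~\ref{F22} almost verbatim, replacing the first-family data $(p_{\sigma,\mu},\widetilde{k}_{\sigma,\mu},t_{2,\sigma,\mu})$ by the second-family data $(q_{\sigma,\mu},\widetilde{g}_{\sigma,\mu},s_{2,\sigma,\mu})$. First I would unfold the definition~\eqref{ex3} of $\mathcal{S}_j^{\mathrm{enr}}$ and use the affineness property~\eqref{prop2} of the barycentric coordinates to write
\[
\mathcal{S}_j^{\mathrm{enr}}(\lambda_i^2)=\int_{-1}^{1}q_{\sigma,\mu}(t)\left(\tfrac{1+t}{2}\lambda_i(\B v_{j+1})+\tfrac{1-t}{2}\lambda_i(\B v_{j+2})\right)^2\widetilde{g}_{\sigma,\mu}(t)\,dt.
\]
If $i=j$, property~\eqref{propstar} says that $\lambda_j$ vanishes identically on the edge $s_j$, so the integrand is zero and $\mathcal{S}_j^{\mathrm{enr}}(\lambda_j^2)=0$.

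If $i\neq j$, the Kronecker delta property of the barycentric coordinates forces exactly one of $\lambda_i(\B v_{j+1})$, $\lambda_i(\B v_{j+2})$ to equal $1$ and the other to equal $0$, so the squared factor collapses to $\tfrac14(1\pm t)^2=\tfrac14(1\pm 2t+t^2)$. Integrating this against $q_{\sigma,\mu}(t)\widetilde{g}_{\sigma,\mu}(t)$, the constant term $1$ contributes $0$ by the orthogonality of $q_{\sigma,\mu}$ to the constants (Lemma~\ref{polp}), and the linear term $\pm 2t$ contributes $0$ either by Lemma~\ref{polp} applied to the linear polynomial $t$ or, equivalently, because $q_{\sigma,\mu}\widetilde{g}_{\sigma,\mu}$ is even and $t$ is odd. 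This leaves
\[
\mathcal{S}_j^{\mathrm{enr}}(\lambda_i^2)=\frac14\int_{-1}^{1}q_{\sigma,\mu}(t)\,t^2\,\widetilde{g}_{\sigma,\mu}(t)\,dt.
\]
Then I would substitute $t^2=q_{\sigma,\mu}(t)+s_{2,\sigma,\mu}$ from~\eqref{was22}, obtaining
\[
\mathcal{S}_j^{\mathrm{enr}}(\lambda_i^2)=\frac14\int_{-1}^{1}q_{\sigma,\mu}(t)\bigl(q_{\sigma,\mu}(t)+s_{2,\sigma,\mu}\bigr)\widetilde{g}_{\sigma,\mu}(t)\,dt=\frac14\left\|q_{\sigma,\mu}\right\|^2_{2,\sigma,\mu},
\]
where the cross term vanishes once more by the orthogonality of $q_{\sigma,\mu}$ (Lemma~\ref{polp}) and the fact that $\widetilde{g}_{\sigma,\mu}$ has unit mass (Lemma~\ref{d1}). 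Combining the two cases yields the claimed identity~\eqref{ex321}.

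There is no genuine obstacle here: the argument is the exact mirror of the proof of Lemma~\ref{F22}. The only point requiring care is bookkeeping --- making sure that the normalization, the second-moment formula, and the orthogonality are pulled from the second-family Lemmas~\ref{d1}, \ref{momw1}, \ref{polp} (together with~\eqref{was22}) rather than from their first-family analogues, so that the constant in the final expression is indeed $\left\|q_{\sigma,\mu}\right\|^2_{2,\sigma,\mu}$ and not $\left\|p_{\sigma,\mu}\right\|^2_{2,\sigma,\mu}$.
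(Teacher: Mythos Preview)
Your proposal is correct and follows exactly the approach the paper intends: the paper's own proof of Lemma~\ref{l2} simply reads ``The proof is similar to that of Lemma~\ref{F22} and is therefore omitted,'' and you have carried out precisely that transcription, replacing $(p_{\sigma,\mu},\widetilde{k}_{\sigma,\mu},t_{2,\sigma,\mu})$ by $(q_{\sigma,\mu},\widetilde{g}_{\sigma,\mu},s_{2,\sigma,\mu})$ and invoking the correct second-family Lemmas~\ref{d1} and~\ref{polp} together with~\eqref{was22}.
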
 
\begin{proof}
        The proof is similar to that of Lemma~\ref{F22} and is therefore omitted. 
\end{proof}

Now, we are ready to prove that the triple $\mathcal{H}_{2,\sigma,\mu}^{\mathrm{enr}}$ is unisolvent. \\

\noindent
\textbf{Proof of Theorem~\ref{th1nnewfin112}.}
Using Lemmas~\ref{I1}, \ref{I2}, \ref{l1}, and \ref{l2}, the proof follows along the same lines as that of Theorem~\ref{th1nnewfin11} and is therefore omitted. 
\qed    

\medskip

\noindent
\textbf{Proof of Theorem~\ref{th2allalf2}}\\
Using Lemmas~\ref{I1}, \ref{I2}, \ref{l1}, and \ref{l2}, the proof follows along the same lines as that of Theorem~\ref{th2allalf3r} and is therefore omitted. 

\qed

\subsection{Parameter selection by global mesh-level tuning}

We discuss a complementary strategy to select the parameters of the edge densities 
(e.g., $\mu,\sigma$ for the families introduced in Section~\ref{sec1}). Given a validation set of test functions $$\left\{f^{(r)}\right\}_{r=1}^R$$ and a sequence of meshes $$\left\{\mathcal T_n\right\}_{n=0}^N,$$
we determine the optimal parameters by minimizing the global $L^1$ error over a candidate grid 
$\mathcal P$ of parameter pairs:
\[
\left(\mu^\star,\sigma^\star\right) \in 
\arg\min_{(\mu,\sigma)\in\mathcal P}
\sum_{r}\sum_{n} 
\left\| f^{(r)} - {\pi}_{1,\sigma,\mu}^{{\mathrm{enr}}}\left[f^{(r)}\right]\right\|_{L^1(\Omega;\mathcal T_n)},
\]
where ${\pi}_{1,\sigma,\mu}^{{\mathrm{enr}}}$ is defined in~\eqref{pi1}.
This procedure provides a robust, once-for-all choice of the parameters.

\begin{algorithm}[H]
\caption{Global parameter tuning by grid search}
\label{alg:grid}
\begin{algorithmic}[1]
\Require Validation functions $\left\{f^{(r)}\right\}_{r=1}^R$; 
         meshes $\{\mathcal T_n\}_{n=0}^N$; 
         candidate grid $\mathcal P$ of parameter pairs $(\mu,\sigma)$; 
         reconstruction operator $\pi^{\mathrm{enr}}_{1,\mu,\sigma}$
\Ensure Optimal parameters $\left(\mu^\star,\sigma^\star\right)$
\State $E_{\min} \gets +\infty$; \quad $\left(\mu^\star,\sigma^\star\right)\gets \text{undefined}$
\ForAll{$(\mu,\sigma)\in\mathcal P$}
  \State $E(\mu,\sigma)\gets 0$
  \For{$r=1$ to $R$}
    \For{$n=0$ to $N$}
      \State $u \gets \pi^{\mathrm{enr}}_{1,\mu,\sigma}[f^{(r)};\mathcal T_n]$
      \State $E(\mu,\sigma) \gets E(\mu,\sigma) + \|f^{(r)}-u\|_{L^1(\Omega;\mathcal T_n)}$
    \EndFor
  \EndFor
  \If{$E(\mu,\sigma) < E_{\min}$}
     \State $E_{\min}\gets E(\mu,\sigma)$; \quad $\left(\mu^\star,\sigma^\star\right)\gets(\mu,\sigma)$
  \EndIf
\EndFor
\State \Return $\left(\mu^\star,\sigma^\star\right)$
\end{algorithmic}
\label{alg1}
\end{algorithm}
This grid--search strategy is simple to implement and guarantees the existence of a well--defined 
pair of optimal parameters $\left(\mu^\star,\sigma^\star\right)$. Once selected, these values can be applied 
uniformly across all mesh elements, thus ensuring that the reconstruction procedure is 
completely specified and reproducible. Although the search involves multiple reconstructions on 
the validation set, it is performed only once as an offline preprocessing step. The modest 
computational cost is largely compensated by a substantial gain in robustness and accuracy in 
the proposed weighted histopolation framework. In practice, the method behaves like a standard 
hyperparameter tuning stage in machine learning, providing an automatic and principled way to 
calibrate the edge densities before applying the reconstruction algorithm to new data.


\section{Generalization to Arbitrary Edge Probability Densities}
\label{sec3}
In the previous sections, we focused on two specific two-parameter families of 
generalized truncated normal distributions, each giving rise to valid probability 
densities on the edges of a triangular element. A key observation is that, regardless 
of the chosen family, the resulting edge-restricted densities share the same structural 
properties: they are normalized probability densities, symmetric with respect to the 
edge parameter, and admit finite moments of all orders. These properties alone are 
sufficient to establish unisolvence of the enriched triple and to construct explicit 
quadratic reconstruction operators.

This motivates us to consider a more general setting. Let $T \subset \mathbb{R}^2$ 
be a nondegenerate triangle with edges $s_j = \left[\B v_{j+1}, \B v_{j+2}\right]$, 
and denote by
\begin{equation*}
\gamma_j(t) = \frac{1+t}{2} \B v_{j+1} + \frac{1-t}{2} \B v_{j+2}, 
\quad t \in [-1,1],    
\end{equation*}
the affine parametrization of the edge $s_j$. 
Let $\omega \in L^1([-1,1])$ be a general pdf on $[-1,1]$. We assume further that $\omega$ admits finite moments
\begin{equation}\label{notazz}
\mu_{m,\omega} = \int_{-1}^1 t^m \omega(t)dt, \quad m\in\mathbb{N}.    
\end{equation}
\subsection{Edge functionals and orthogonal polynomials}
Given such a density $\omega$, we define enriched edge functionals in analogy with the 
previous constructions by
\begin{align}
\mathcal{I}^\omega_j(f) &= \int_{-1}^1 f(\gamma_j(t))  \omega(t) dt, \\
\mathcal{L}^\omega_j(f) &= \int_{-1}^1 q(t) f(\gamma_j(t)) \omega(t) dt, \label{ssasaaz}
\end{align}
where $q \in \mathbb{P}_r([-1,1])$, $r \ge 2$, is a polynomial satisfying the orthogonality 
conditions
\begin{equation}\label{eq:orth-min}
    \int_{-1}^1 q(t) \omega(t)dt = 0, 
\quad 
\int_{-1}^1 t q(t) \omega(t)dt = 0,
\end{equation}
together with the non-degeneracy condition on the second moment
\begin{equation}\label{eq:mu2}
    \mu_{2,\omega} := \int_{-1}^1 t^2 q(t) \omega(t)dt \neq 0.
\end{equation}
These assumptions ensure that the $q$-weighted edge functionals~\eqref{ssasaaz} vanish on affine 
polynomials while detecting quadratic contributions.

\subsection{Unisolvency and reconstruction}
We then define the enriched triple
\begin{equation*}
  \mathcal{H}^{\omega} = \left(T, \mathbb{P}_2(T), \Sigma^{\omega}(T)\right),  
\end{equation*}
where
\begin{equation*}
\Sigma^{\omega}(T) = \left\{ \mathcal{I}^{\omega}_j, \mathcal{L}^{\omega}_j \,:\, j=1,2,3 \right\}.    
\end{equation*}
The following result shows that the structural properties observed for the specific 
families extend to any choice of probability density $\omega$.

\begin{theorem}\label{th1nnewfin11ss} 
 The triple $\mathcal{H}^{\omega}$ is unisolvent.
\end{theorem}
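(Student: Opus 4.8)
The plan is to repeat, \emph{mutatis mutandis}, the argument proving Theorem~\ref{th1nnewfin11}, replacing the density-specific lemmas of Section~\ref{ss1} by their analogues for a generic weight $\omega$. By the definition of unisolvency it suffices to show that the only $p_2\in\mathbb{P}_2(T)$ with $\mathcal{I}^\omega_j(p_2)=\mathcal{L}^\omega_j(p_2)=0$ for $j=1,2,3$ is $p_2\equiv 0$. I would write $p_2$ in barycentric form as $p_2=\sum_{i=1}^3 a_i\lambda_i+\sum_{i=1}^3 b_i\lambda_i^2$ and eliminate the coefficients in two stages: first the quadratic ones $b_i$, using the $q$-weighted functionals $\mathcal{L}^\omega_j$, and then the linear ones $a_i$, using the plain functionals $\mathcal{I}^\omega_j$.

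The two facts that make this work are elementary. First, for any $p_1\in\mathbb{P}_1(T)$ the composition $p_1\circ\gamma_j$ is an affine function of $t$, so the orthogonality relations~\eqref{eq:orth-min} give $\mathcal{L}^\omega_j(p_1)=0$ for every $j$. Second, by~\eqref{propstar} one has $\lambda_j\equiv 0$ on $s_j$, while for $i\neq j$ exactly one of $\lambda_i(\B{v}_{j+1}),\lambda_i(\B{v}_{j+2})$ equals $1$ and the other $0$, so that $\lambda_i(\gamma_j(t))=\tfrac{1\pm t}{2}$ and hence $\lambda_i^2(\gamma_j(t))=\tfrac14\bigl(1\pm 2t+t^2\bigr)$. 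Integrating against $q(t)\omega(t)$ and using~\eqref{eq:orth-min} once more to kill the constant and linear parts — which also makes the sign ambiguity in $\pm 2t$ disappear — I obtain the analogue of Lemma~\ref{F22},
\begin{equation*}
\mathcal{L}^\omega_j(\lambda_i^2)=\frac{\nu}{4}\bigl(1-\delta_{ij}\bigr),\qquad \nu:=\int_{-1}^1 t^2 q(t)\,\omega(t)\,dt,
\end{equation*}
where $\nu$ is precisely the quantity required to be nonzero in~\eqref{eq:mu2}.

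With these in hand, $\mathcal{L}^\omega_j(p_2)=0$ collapses, exactly as in the proof of Theorem~\ref{th1nnewfin11}, to $\tfrac{\nu}{4}A\B{b}=\B{0}$ with $A$ the nonsingular matrix~\eqref{MA}; since $\nu\neq 0$ this forces $b_1=b_2=b_3=0$, so $p_2$ is an affine polynomial $p_1$. For the second stage I would use $p_1(\gamma_j(t))=\tfrac{1+t}{2}p_1(\B{v}_{j+1})+\tfrac{1-t}{2}p_1(\B{v}_{j+2})$ and integrate against $\omega$, obtaining a $3\times 3$ system for the vertex values $\bigl(p_1(\B{v}_1),p_1(\B{v}_2),p_1(\B{v}_3)\bigr)$. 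When $\omega$ is symmetric, i.e.\ $\mu_{1,\omega}=0$ in the notation~\eqref{notazz} — as is the case for both families of Section~\ref{sec1} — this is exactly the system $\tfrac12 A\B{a}=\B{0}$ of Theorem~\ref{th1nnewfin11} and yields $a_1=a_2=a_3=0$; more generally the system matrix has determinant $2\bigl(1+3\mu_{1,\omega}^2\bigr)\neq 0$, so the conclusion persists. In either case $p_1$ vanishes at the three vertices of $T$, hence $p_1\equiv 0$ and therefore $p_2\equiv 0$.

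The argument is almost a verbatim transcription of the linear-algebra steps in Section~\ref{ss1}, with $\widetilde{k}_{\sigma,\mu}$ replaced by $\omega$, $p_{\sigma,\mu}$ by $q$, and $\|p_{\sigma,\mu}\|^2_{2,\sigma,\mu}$ by $\nu$; the only point that genuinely has to be checked, rather than copied, is that the structural hypotheses on $\omega$ and $q$ — the orthogonality conditions~\eqref{eq:orth-min} and the non-degeneracy~\eqref{eq:mu2} — are exactly what keeps both $3\times 3$ systems invertible. That verification, sketched above, is where I would expect the (mild) effort to lie; everything else follows the established template.
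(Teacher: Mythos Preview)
Your proof is correct and follows essentially the same two-stage elimination as the paper's sketch (and the detailed proofs of Theorems~\ref{th1nnewfin11} and~\ref{th1nnewfin112}); you even go slightly beyond the paper by explicitly verifying invertibility of the second linear system for a general, not necessarily even, density $\omega$. One harmless arithmetic slip: the determinant of that second system is $\tfrac14\bigl(1+3\mu_{1,\omega}^2\bigr)$ rather than $2\bigl(1+3\mu_{1,\omega}^2\bigr)$, but this does not affect the conclusion.
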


\begin{proof}[Sketch of proof]
The argument follows the same lines as in the proofs of Theorems~2.3 and~2.19. Indeed, let $p_2\in \mathbb{P}_2(T)$ such that
\begin{equation*}
    \mathcal{I}_j^{\omega}\left(p_2\right)=\mathcal{L}_j^{\omega}\left(p_2\right)=0, \quad j=1,2,3.
\end{equation*}
Along each edge $s_j$, the trace of $p_2$ is quadratic. By construction, $\mathcal{L}^{\omega}_j$ 
annihilates constant and linear terms, while detecting the quadratic coefficient. 
Hence, vanishing of $\mathcal{L}^{\omega}_j\left(p_2\right)$ implies that all edge traces of $p_2$ are affine. This forces $p_2 \in \mathbb{P}_1(T)$. The remaining conditions $\mathcal{I}^{\omega}_j\left(p_2\right)=0$ then yield a 
homogeneous system for the linear coefficients, which admits only the trivial solution. This completes the proof.
\end{proof}
\medskip

This generalization shows that the enriched quadratic reconstruction framework is not 
restricted to the two-parameter families considered earlier, but in fact applies to 
any choice of probability density on the edges. The specific distributions $\widetilde{k}_{\sigma,\mu}$ 
and $\widetilde{g}_{\sigma,\mu}$ therefore appear as special cases within a broad and flexible class, 
highlighting the robustness and universality of the proposed approach.

\begin{theorem}\label{th2allalf} 
If the probability density function $\omega$ is even, then the basis functions 
associated with the enriched triple $\mathcal{H}^{\omega}$ take the form
\begin{align*}
   \varphi_{\omega, i}&=  1-2\lambda_i,  \quad i=1,2,3, \\
    \psi_{\omega, i}&=  - A_{\omega}\varphi_{\omega, i}+\frac{2}{\left\|{q}\right\|^2_{2,{\omega} }}\left(-\lambda_i^2+\lambda^2_{i+1}+\lambda^2_{i+2}\right), \quad i=1,2,3,
\end{align*}
where 
\begin{equation*}
    \left\|{q}\right\|^2_{2,{\omega} }=\int_{-1}^{1}q^2(t)
\omega(t)dt,
\end{equation*}
\begin{equation*}
A_{\omega}=\frac{1+\mu_{2,\omega}}{ \left\|{q}\right\|^2_{2,{\omega}}}.
\end{equation*}
      \end{theorem}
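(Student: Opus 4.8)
The plan is to mimic the proof of Theorem~\ref{th2allalf3r} (equivalently Theorem~\ref{th2allalf2}), since the whole argument there rests on only four structural identities for the edge functionals, all of which survive for an arbitrary \emph{even} density $\omega$. Concretely, I would first establish, for $i,j=1,2,3$, the $\omega$-analogues of Lemmas~\ref{I12}, \ref{I22}, \ref{lem22} and~\ref{F22}, namely
\begin{equation*}
\mathcal{I}^{\omega}_j(\lambda_i)=\tfrac{1}{2}\left(1-\delta_{ij}\right),\qquad
\mathcal{I}^{\omega}_j(\lambda_i^2)=\tfrac{1+\mu_{2,\omega}}{4}\left(1-\delta_{ij}\right),
\end{equation*}
\begin{equation*}
\mathcal{L}^{\omega}_j(\lambda_i)=0,\qquad
\mathcal{L}^{\omega}_j(\lambda_i^2)=\tfrac{\left\|q\right\|^2_{2,\omega}}{4}\left(1-\delta_{ij}\right).
\end{equation*}
For $i=j$ all four quantities vanish because $\lambda_j\equiv 0$ on $s_j$, cf.~\eqref{propstar}. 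For $i\neq j$, one of $\lambda_i(\B v_{j+1})$, $\lambda_i(\B v_{j+2})$ equals $1$ and the other $0$, so after the parametrization $\gamma_j$ the integrand reduces to $\left(\tfrac{1\pm t}{2}\right)^{m}\omega(t)$ with $m\in\{1,2\}$, multiplied by $q(t)$ in the case of $\mathcal{L}^{\omega}_j$. Expanding $(1\pm t)^m$ and integrating against $\omega$, the odd-power terms drop out precisely because $\omega$ is even -- which is where that hypothesis is used; for the $q$-weighted functionals the orthogonality relations~\eqref{eq:orth-min} annihilate the constant and linear parts, and, taking $q$ to be the monic degree-two $\omega$-orthogonal polynomial $q(t)=t^2-\mu_{2,\omega}$, one has $\int_{-1}^1 t^2 q(t)\omega(t)\,dt=\left\|q\right\|^2_{2,\omega}$, which gives the last identity.

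With these identities in hand, I would carry out the same $3\times 3$ linear algebra as in the proof of Theorem~\ref{th2allalf3r}. Writing $\varphi_{\omega,i}=\sum_{k}a_k\lambda_k+\sum_k b_k\lambda_k^2$, the requirement $\mathcal{L}^{\omega}_j(\varphi_{\omega,i})=0$ becomes $\tfrac{\left\|q\right\|^2_{2,\omega}}{4}A\B b=\B 0$ with $A$ as in~\eqref{MA}, hence $\B b=\B 0$ by~\eqref{IMA}; then $\mathcal{I}^{\omega}_j(\varphi_{\omega,i})=\delta_{ij}$ becomes $\tfrac12 A\B a=\B e_i$ (the $i$-th unit vector), giving $a_i=-1$ and $a_k=1$ for $k\neq i$, i.e.\ $\varphi_{\omega,i}=-\lambda_i+\lambda_{i+1}+\lambda_{i+2}=1-2\lambda_i$ by the partition of unity. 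Similarly, writing $\psi_{\omega,i}=\sum_k c_k\lambda_k+\sum_k d_k\lambda_k^2$, the requirement $\mathcal{L}^{\omega}_j(\psi_{\omega,i})=\delta_{ij}$ gives $d_i=-\tfrac{2}{\left\|q\right\|^2_{2,\omega}}$ and $d_k=\tfrac{2}{\left\|q\right\|^2_{2,\omega}}$ for $k\neq i$; substituting into $\mathcal{I}^{\omega}_j(\psi_{\omega,i})=0$ and using the two $\mathcal{I}^{\omega}$-identities turns this into $\tfrac12 A\B c=-A_{\omega}\B e_i$, whence $c_i=A_{\omega}$ and $c_k=-A_{\omega}$ for $k\neq i$, that is $\sum_k c_k\lambda_k=-A_{\omega}\varphi_{\omega,i}$. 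Assembling the pieces reproduces the stated expression for $\psi_{\omega,i}$, and one checks directly that $\mathcal{I}^{\omega}_j(\varphi_{\omega,i})=\mathcal{L}^{\omega}_j(\psi_{\omega,i})=\delta_{ij}$ and $\mathcal{L}^{\omega}_j(\varphi_{\omega,i})=\mathcal{I}^{\omega}_j(\psi_{\omega,i})=0$, so by the unisolvency of $\mathcal{H}^{\omega}$ (Theorem~\ref{th1nnewfin11ss}) these are indeed the basis functions.

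I expect the only genuinely delicate point to be the fourth identity $\mathcal{L}^{\omega}_j(\lambda_i^2)=\tfrac{\left\|q\right\|^2_{2,\omega}}{4}(1-\delta_{ij})$: it requires the polynomial $q$ defining $\mathcal{L}^{\omega}_j$ to be proportional to the degree-two $\omega$-orthogonal polynomial, so that $\int_{-1}^1 t^2 q\,\omega=\left\|q\right\|^2_{2,\omega}$. This holds automatically for the two explicit families, where $p_{\sigma,\mu}(t)=t^2-t_{2,\sigma,\mu}$ and $q_{\sigma,\mu}(t)=t^2-s_{2,\sigma,\mu}$, and for a general even $\omega$ the monic degree-two orthogonal polynomial is exactly $q(t)=t^2-\mu_{2,\omega}$ (its linear coefficient vanishes by evenness), so no loss of generality occurs. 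Everything else is the verbatim $3\times 3$ computation of Theorem~\ref{th2allalf3r}, now performed with $q$, $\mu_{2,\omega}$, $\left\|q\right\|^2_{2,\omega}$ and $A_{\omega}$ in place of $p_{\sigma,\mu}$, $t_{2,\sigma,\mu}$, $\left\|p_{\sigma,\mu}\right\|^2_{2,\sigma,\mu}$ and $A_{\sigma,\mu}$.
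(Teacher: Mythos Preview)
Your proposal is correct and follows exactly the route the paper intends: the paper gives no separate proof of Theorem~\ref{th2allalf}, since it is the obvious transcription of the proof of Theorem~\ref{th2allalf3r} with $\widetilde{k}_{\sigma,\mu}$, $p_{\sigma,\mu}$, $t_{2,\sigma,\mu}$ replaced by $\omega$, $q$, $\mu_{2,\omega}$. You have also correctly isolated the one point that requires comment in the general setting, namely that the identity $\mathcal{L}^{\omega}_j(\lambda_i^2)=\tfrac{1}{4}\|q\|^2_{2,\omega}(1-\delta_{ij})$ relies on $q$ being (proportional to) the monic degree-two $\omega$-orthogonal polynomial $t^2-\mu_{2,\omega}$, which is precisely the choice the paper singles out in the remark closing Section~\ref{sec3} for even~$\omega$.
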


\subsection{Practical constructions of $q$}
When $q$ is chosen to be a quadratic polynomial, it is possible to construct it explicitly
from the moments $\mu_{m,\omega}$. We outline below three practical approaches.
\begin{enumerate}
  \item[\textbf{(A)}] \emph{Canonical quadratic form.}  Seek $q(t)=t^2-a-bt$ subject to the orthogonality conditions~\eqref{eq:orth-min}.  
Using the notation~\eqref{notazz},  
the orthogonality conditions yield the linear system
  \begin{equation*}
       \begin{cases}
  \mu_{2,\omega}-a-b \mu_{1,\omega}=0,\\
   \mu_{3,\omega}-a \mu_{1,\omega}-b \mu_{2,\omega}=0,
  \end{cases}
  \quad\Longrightarrow\quad
  b=\frac{ \mu_{1,\omega} \mu_{2,\omega}- \mu_{3,\omega}}{ \mu_{1,\omega}^2- \mu_{2,\omega}},\quad a= \mu_{2,\omega}-b \mu_{1,\omega},
  \end{equation*}
  provided $ \mu_{2,\omega}\ne  \mu_{1,\omega}^2$ (nonzero variance). Optionally normalize $q$ by 
  \begin{equation*}
  \widehat q=\frac{q}{\nu}    
  \end{equation*}
   where
   \begin{equation*}
       \nu=\mu_{4,\omega}-a \mu_{2,\omega}-b \mu_{3,\omega}.
   \end{equation*}
  \item[\textbf{(B)}] \emph{Orthogonal polynomial.}
 Construct the family of orthogonal polynomials $\left\{\pi_k\right\}_{k\ge 0}$ with respect to the inner product
\begin{equation*}
    \left\langle f,g\right\rangle_\omega=\int_{-1}^1 f(t)g(t)\omega(t)dt.
\end{equation*}
  Taking $q=\pi_2$ (or its normalized version) ensures that conditions~\eqref{eq:orth-min}–\eqref{eq:mu2} 
  are satisfied by construction~\cite{milovanovic1997orthogonal, mastroianni2008interpolation}.
  \item[\textbf{(C)}] \emph{Higher degree.}   Any polynomial $q$ of degree $\ge 2$ fulfilling~\eqref{eq:orth-min}–\eqref{eq:mu2} is admissible.  
  The quadratic choice is however preferred, since it requires only low-order moments 
  and ensures stability.
\end{enumerate}

\begin{remark}
If the chosen density $\omega$ is symmetric with respect to the origin, 
then all odd moments vanish, i.e., $$\mu_{2k+1,\omega}=0, \quad k\in\mathbb{N}.$$ 
In this case, the expressions in~(A) simplify considerably: one obtains 
$b=0$ and hence
\begin{equation*}
   q(t) = t^2 - \mu_{2,\omega}.  
\end{equation*}
This situation arises in particular for both the first and the second family of densities 
introduced in Section~\ref{sec1}, which are symmetric and therefore 
lead to especially simple quadratic forms.
\end{remark}

\subsection{Numerical procedure}

Before presenting the numerical results, we summarize the computational workflow 
adopted in our experiments. The procedure consists of two phases:

\begin{itemize}
  \item \emph{Offline parameter tuning}: optimal density parameters $\left(\mu^\star,\sigma^\star\right)$ 
  are selected by Algorithm~\ref{alg1}, applied to a small validation set of functions 
  and a sequence of meshes.
  \item \emph{Reconstruction and error evaluation}: once the optimal parameters are fixed, 
  they are used uniformly across all tests to compare the classical histopolation scheme 
  $\mathcal{CH}$ with the enriched scheme $\mathcal{H}^{\mathrm{enr}}_{1,\sigma^\star,\mu^\star}$.
\end{itemize}

The pseudocode of the complete workflow is reported in Algorithm~\ref{alg:tests}.

\begin{algorithm}[H]
\caption{Numerical testing workflow}
\label{alg:tests}
\begin{algorithmic}[1]
\Require Test functions $\left\{f_r\right\}_{r=1}^{R}$; mesh sequence $\left\{\mathcal{T}_n\right\}_{n=0}^N$; parameter grid $\mathcal P$
\Ensure $L^1$ errors for $\mathcal{CH}$ and $\mathcal{H}^{\mathrm{enr}}_{1,\sigma^\star,\mu^\star}$
\State $\left(\mu^\star,\sigma^\star\right) \gets$ parameter tuning via Algorithm~\ref{alg1}
\For{each test function $f_j$}
  \For{each mesh $\mathcal{T}_n$}
    \State $u^{\mathrm{CH}} \gets \pi^{\mathrm{CH}}\left[f_j;\mathcal{T}_n\right]$
    \State $u^{\mathrm{enr}} \gets \pi_{1,\sigma,\mu}^{\mathrm{enr}}\left[f_j;\mathcal{T}_n\right]$
    \State Compute $E^{\mathrm{CH}}_{j,n} =\left \|f_j - u^{\mathrm{CH}}\right\|_{L^1(\Omega;\mathcal{T}_n)}$
    \State Compute $E^{\mathrm{enr}}_{j,n} = \left\|f_j - u^{\mathrm{enr}}\right\|_{L^1(\Omega;\mathcal{T}_n)}$
  \EndFor
\EndFor
\State \Return Error arrays $\left\{E^{\mathrm{CH}}_{j,n}, E^{\mathrm{enr}}_{j,n}\right\}$
\end{algorithmic}
\end{algorithm}

This explicit workflow highlights how the parameter tuning step integrates seamlessly into the 
testing phase: after a one-time offline calibration, the enriched scheme is applied to new test 
functions and meshes in a fully specified and reproducible way.

\section{Numerical tests}\label{sec4}
In this section, we present a series of numerical experiments to assess the performance 
and accuracy of the proposed enriched histopolation methods. 
All tests are carried out on the square domain $\Omega = [-1,1]^2$, 
using a collection of benchmark functions that exhibit a variety of analytic behaviors, 
including smooth, oscillatory, and localized features. 
The test functions are defined as follows
 \begin{eqnarray*}
    f_1(x,y)&=&\sqrt{x^2+y^2}, \ \ f_2(x,y)=e^{-4\left(x^2+y^2\right)}\sin\left(\pi(x+y)\right),\\
    f_3(x,y)&=&\sin(2\pi x)\sin(2\pi y),\ \, 
    f_4(x,y)=\sin\left(4\pi(x+y)\right),\ \,
    f_5(x,y)=\frac{1}{25(x^2+y^2)+1},\\ 
 f_6(x,y)&=&0.75\exp\biggl(-\frac{(9(x+1)/2-2)^2}{4}-\frac{(9(y+1)/2-2)^2}{4}\biggr)\\[-2pt]
 &&+0.75\exp\biggl(-\frac{(9(x+1)/2+1)^2}{49}-\frac{(9(y+1)/2+1)}{10}\biggr)\\[-2pt]
	&&+ 0.5\exp\biggl(-\frac{(9(x+1)/2-7)^2}{4}-\frac{(9(y+1)/2-3)^2}{4}\biggr)\\
	&&-0.2\exp\biggl(-(9(x+1)/2-4)^2-(9(y+1)/2-7)^2\biggr).
\end{eqnarray*}
The function $f_6$ is the well-known Franke function, widely used as a benchmark for approximation methods~\cite{franke1982scattered}. 
For the spatial discretization, we consider a sequence of regular Friedrichs--Keller 
triangulations~\cite{Knabner}, denoted by
\begin{equation*}
    \mathcal{T}_n=\left\{T_i : i=1,\dots,2(n+1)^2\right\},
\end{equation*}
where each $\mathcal{T}_{n}$ consists of $2(n+1)^2$ triangles; see Fig.~\ref{Fig:rec0}.
\begin{figure}
    \centering
\includegraphics[scale=0.49]{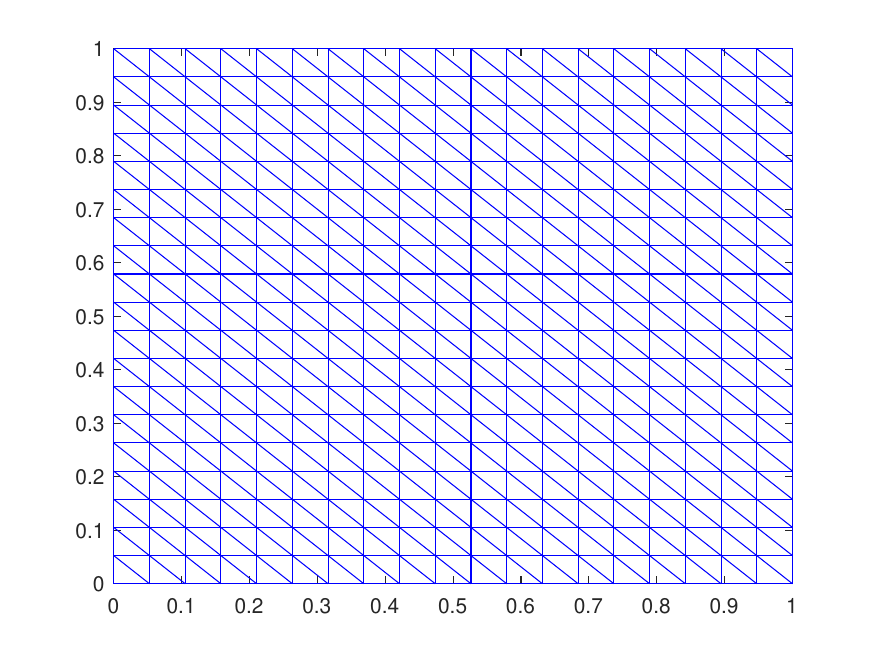}
\includegraphics[scale=0.49]{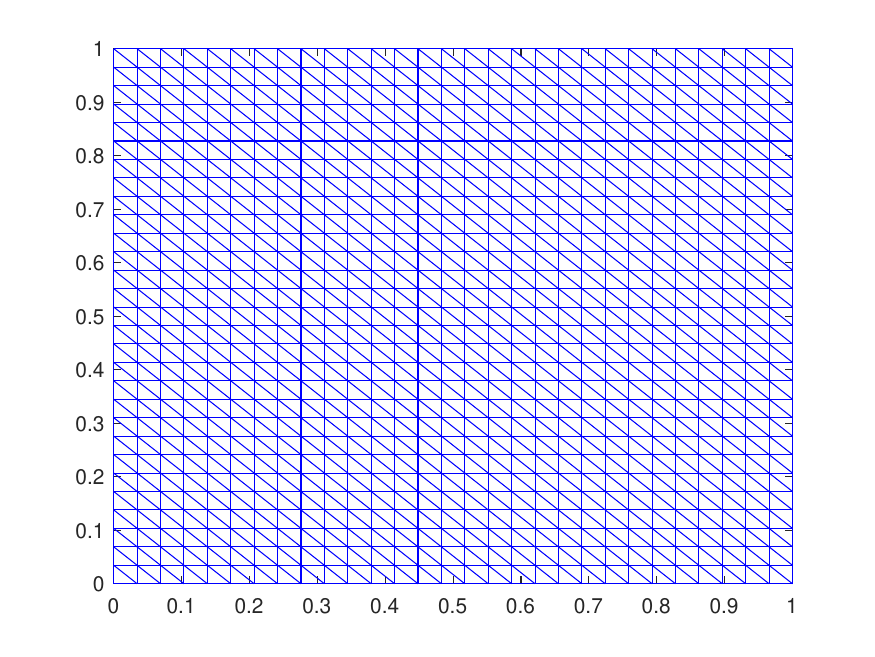}
\includegraphics[scale=0.49]{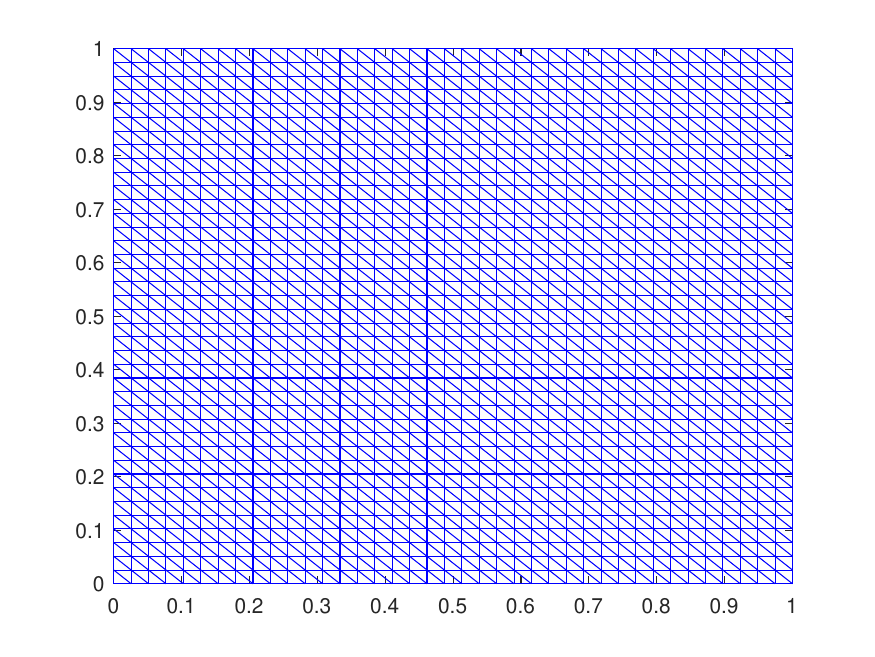}
\includegraphics[scale=0.49]{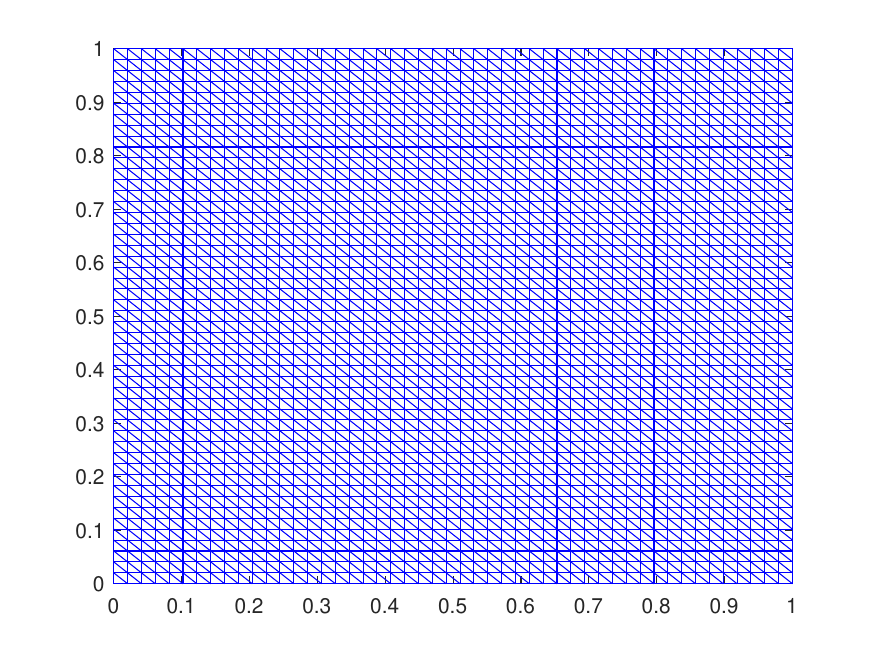}
    \caption{Successive regular Friedrichs--Keller triangulations $\mathcal{T}_n$, with $n=20$ (top left), $n=30$ (top right), $n=40$ (bottom left), and $n=50$ (bottom right).}
    \label{Fig:rec0}
\end{figure}

To evaluate the quality of the reconstruction, we compute the error in the $L^1$ norm 
produced by the classical linear histopolation method $\mathcal{CH}$ defined in~\eqref{CRelement} 
and compare it with the error obtained using the enriched histopolation method 
$\mathcal{H}^{\mathrm{enr}}_{1,\sigma^\star,\mu^{\star}}$ introduced in~\eqref{tripless}, 
with the optimal parameters $\mu^{\star}=2$ and $\sigma^{\star}=1$. 
These parameters have been determined by means of the parameter tuning 
Algorithm~\ref{alg1}. All experiments have been carried out according to Algorithm~\ref{alg:tests}, implemented in \textsc{Matlab}. 
The integrals needed for assembling the reconstruction operators and for evaluating the $L^1$ errors 
were computed on each triangular element using high-order quadrature rules, with a sufficiently 
dense set of nodes to ensure accurate numerical integration and to rule out any artifacts due to 
quadrature errors.  The $L^1$ norm is chosen as it handles discontinuities effectively without requiring limiting procedures (cf.~\cite{dell2025truncated}). 
The results of this comparison are reported in Figures~\ref{im12},~\ref{im34} and~\ref{im56}. 
A clear improvement of the enriched method over the standard scheme is observed across 
all test functions and mesh refinements. In particular, 
$\mathcal{H}^{\mathrm{enr}}_{1,\sigma^{\star},\mu^{\star}}$ achieves significantly smaller $L^1$ errors, 
demonstrating its superior capability in reproducing local oscillations, sharp gradients, 
and singular features. Moreover, the enriched approach demonstrates an accelerated error decay under mesh refinement, thereby confirming that the additional degrees of freedom provided by the probability density weights yield a genuine enhancement in approximation power.

\begin{figure}
  \centering
   \includegraphics[width=0.49\textwidth]{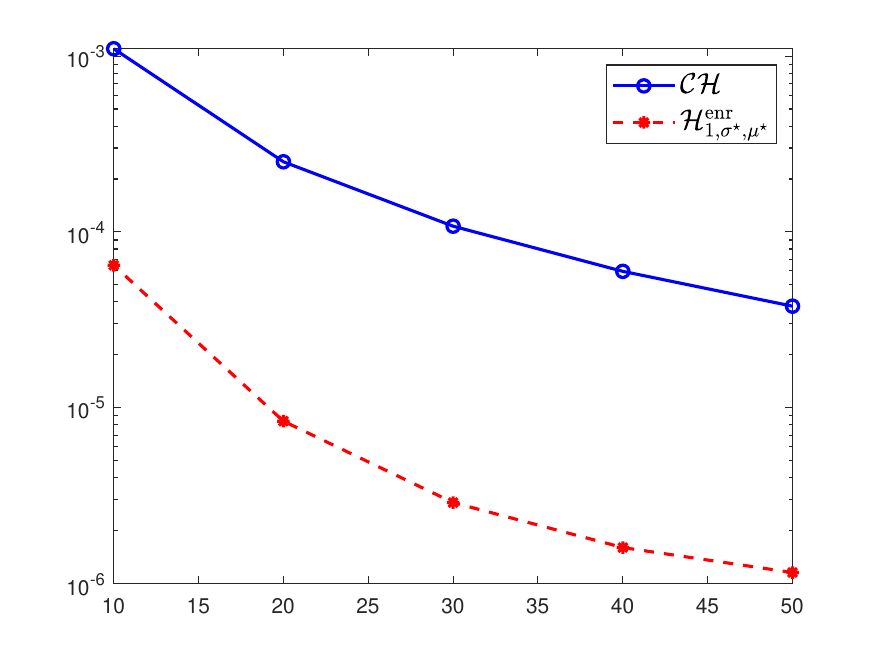} 
\includegraphics[width=0.49\textwidth]{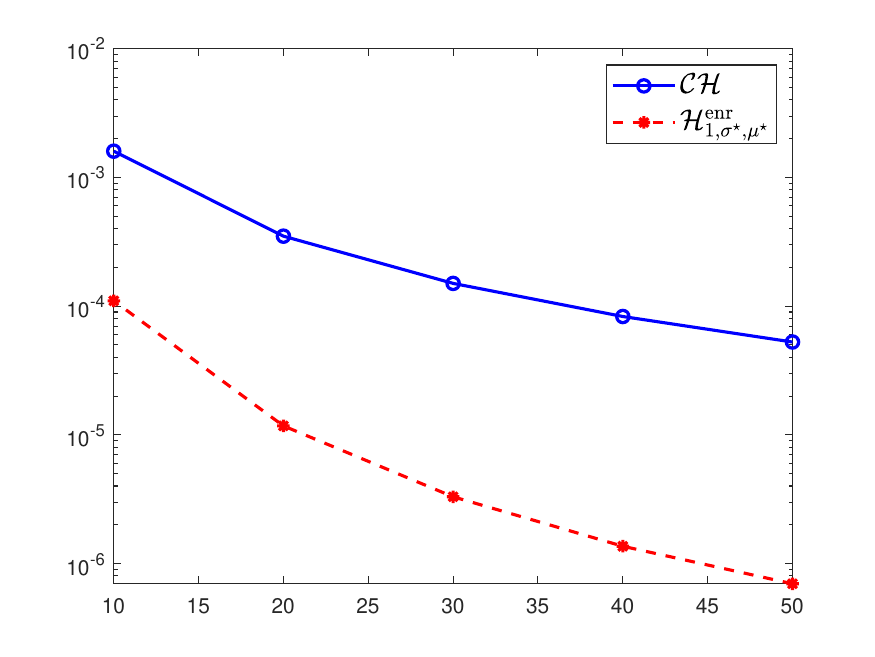} 
         \caption{Semi-log plot of the $L^1$ approximation error for $f_1$ (left) and $f_2$ (right) 
obtained with the classical histopolation method $\mathcal{CH}$ (blue) 
and the weighted quadratic enriched method 
$\mathcal{H}^{\mathrm{enr}}_{1,\sigma^{\star},\mu^{\star}}$ (red), 
as the number of triangles in the Friedrichs--Keller triangulations increases.}
\label{im12}
\end{figure}

\begin{figure}
  \centering
   \includegraphics[width=0.49\textwidth]{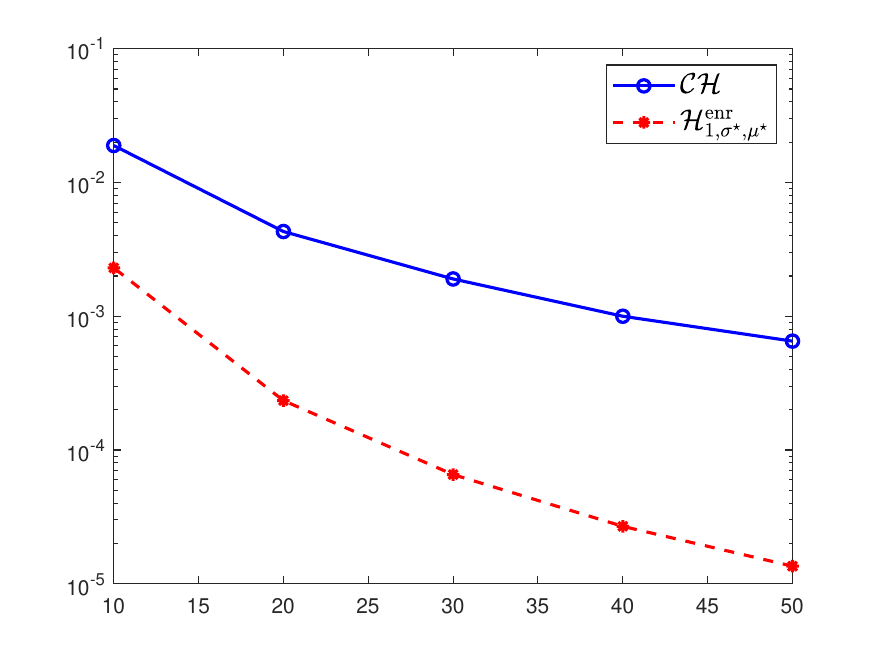} 
\includegraphics[width=0.49\textwidth]{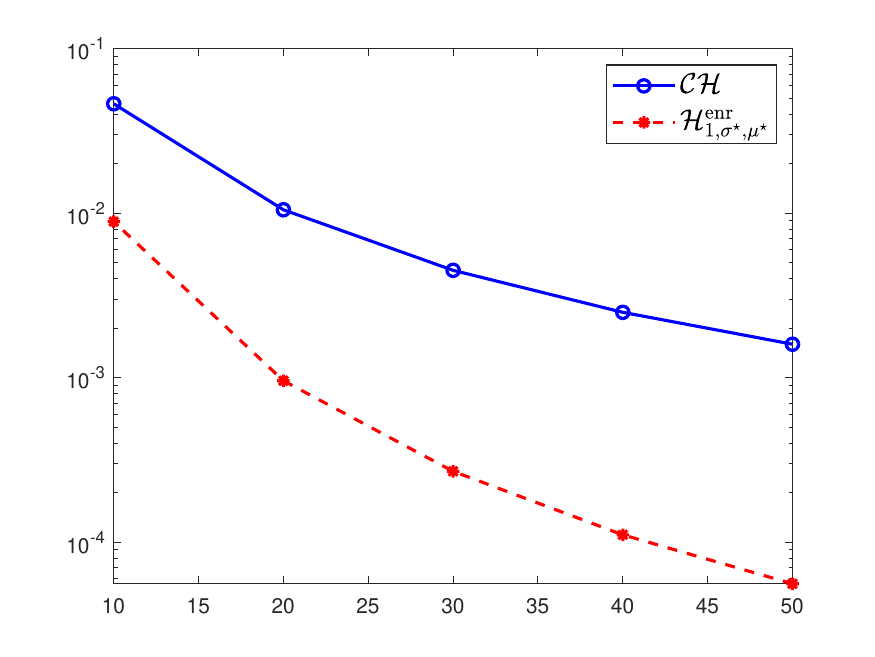} 
         \caption{Semi-log plot of the $L^1$ approximation error for $f_3$ (left) and $f_4$ (right) 
obtained with the classical histopolation method $\mathcal{CH}$ (blue) 
and the weighted quadratic enriched method 
$\mathcal{H}^{\mathrm{enr}}_{1,\sigma^{\star},\mu^{\star}}$ (red), 
as the number of triangles in the Friedrichs--Keller triangulations increases.}
\label{im34}
\end{figure}

\begin{figure}
  \centering
   \includegraphics[width=0.49\textwidth]{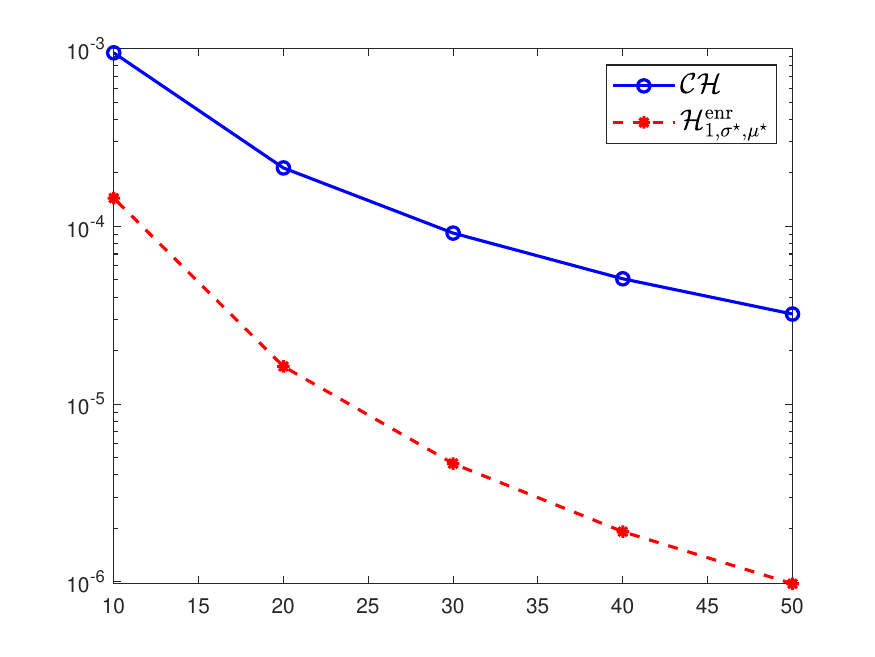} 
\includegraphics[width=0.49\textwidth]{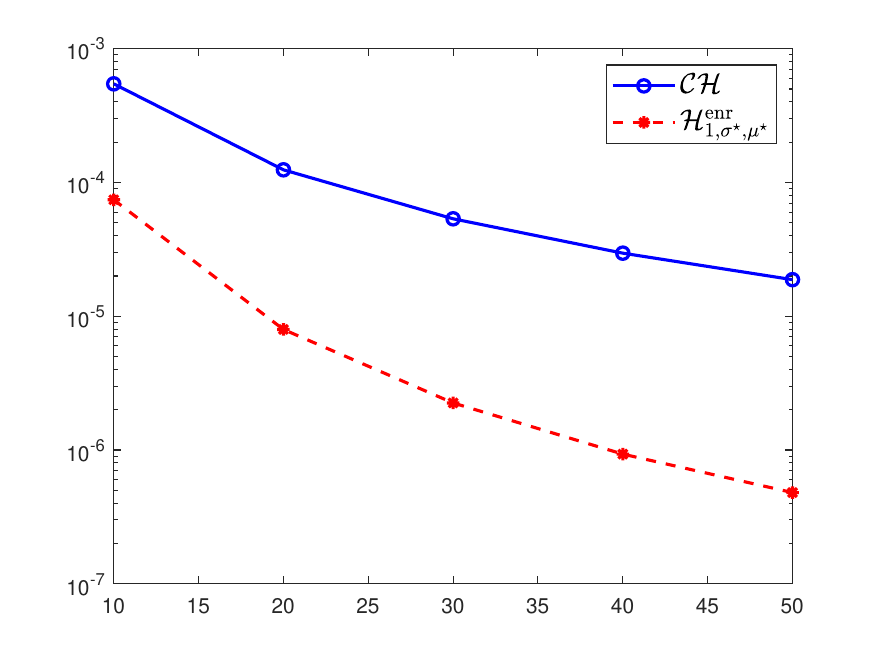} 
         \caption{Semi-log plot of the $L^1$ approximation error for $f_5$ (left) and $f_6$ (right) 
obtained with the classical histopolation method $\mathcal{CH}$ (blue) 
and the weighted quadratic enriched method 
$\mathcal{H}^{\mathrm{enr}}_{1,\sigma^{\star},\mu^{\star}}$ (red), 
as the number of triangles in the Friedrichs--Keller triangulations increases.}
\label{im56}
\end{figure}

\section{Conclusions and Future works}\label{sec5}
In this work we have introduced two new two-parameter families of generalized truncated normal 
distributions and exploited them to design enriched local histopolation schemes based on unisolvent triples. 
These distributions serve as flexible edge weights and provide additional degrees of freedom, which 
translate into improved accuracy and adaptability of the reconstruction operators. The resulting 
quadratic operators preserve the simplicity of the classical linear histopolation method while 
significantly enhancing their approximation capabilities, especially for functions with oscillations, 
steep gradients, or localized singularities. 

From a theoretical point of view, we have established unisolvency and derived explicit closed-form basis functions, thus ensuring that the proposed schemes are mathematically well-posed and computationally viable. We have also proposed an algorithm for the optimal selection of the distribution parameters, which further increases the robustness and adaptivity of the approach. Numerical experiments have confirmed the theoretical findings, showing systematic improvements over the linear nonconforming histopolation approach. Moreover, as shown in Section~\ref{sec3}, the same reasoning extends to any bivariate weight function whose restriction to the edges of the triangle defines a valid probability density. This highlights the generality and robustness of the proposed framework, which is not limited to the two specific families presented in this paper.

Several directions for future research naturally arise from this study. First, extending the 
framework to three-dimensional meshes would broaden its applicability to volumetric data 
reconstruction and tomographic imaging. Second, adaptive strategies for selecting the distribution 
parameters could be developed, so as to optimize accuracy depending on local features of the target 
function. Third, connections with other classes of orthogonal polynomials and probability densities 
could be investigated, possibly leading to new families of enriched operators. Finally, applications 
to real-world problems in imaging, computer vision, and numerical approximation remain a promising 
avenue for further exploration.

\section*{Acknowledgments}
This research has been achieved as part of RITA \textquotedblleft Research
 ITalian network on Approximation'' and as part of the UMI group \enquote{Teoria dell'Approssimazione
 e Applicazioni}. The research was supported by GNCS–INdAM 2025 project \emph{``Polinomi, Splines e Funzioni Kernel: dall'Approssimazione Numerica al Software Open-Source''}. 
The work of F. Nudo is funded from the European Union – NextGenerationEU under the Italian National Recovery and Resilience Plan (PNRR), Mission 4, Component 2, Investment 1.2 \lq\lq Finanziamento di progetti presentati da giovani ricercatori\rq\rq,\ pursuant to MUR Decree No.~47/2025. The research was supported by  by the grant \enquote{Bando Professori visitatori 2022} which has allowed the visit of Prof. Allal Guessab to the Department of Mathematics and Computer Science of the University of Calabria in the spring 2022. The authors extend their appreciation to the Deanship
of Research and Graduate Studies at King Khalid University for funding
this work through Large Research Project under grant number
RGP2/305/46.

\section*{Conflict of interest}
Not Applicable.

\bibliographystyle{spmpsci}
\bibliography{bibliografia}

 \end{document}